\title[Projective Uniqueness of Polytopes]{An Algebraic Approach to Projective Uniqueness with an Application to Order Polytopes}
\author[Tristram Bogart, Jo\~{a}o Gouveia, and Juan Camilo Torres]{Tristram Bogart$^1$, Jo\~{a}o Gouveia$^2$, and Juan Camilo Torres$^1$}
\date{}
\address{$^1$ Departamento de Matem\'{a}ticas, Universidad de los Andes, Bogot\'{a}, Colombia}
\address{$^2$ University of Coimbra, CMUC, Department of Mathematics, Portugal}
\thanks{Emails: \MakeLowercase{tc.bogart22}@uniandes.edu.co, jgouveia@mat.uc.pt, jc.torresc@uniandes.edu.co}
\thanks{The first and third authors were supported by internal research grants (INV-2017-51-1453 and INV-2018-48-1373, respectively) from the Faculty of Sciences of the Universidad de los Andes. These grants allowed them to visit the second author and complete key steps of this project. The third author is also being supported in his doctoral studies, of which this project forms a part, by the Colombian science agency Colciencias. The second author was supported by the Centre for Mathematics of the University of Coimbra – UID/MAT/00324/2019, funded by the Portuguese Government through FCT/MEC and co-funded by the European Regional Development Fund through the Partnership Agreement PT2020.}
\DeclareMathOperator{\rank}{rank}
\DeclareMathOperator{\row}{row}
\DeclareMathOperator{\conv}{conv}
\DeclareMathOperator{\Minors}{Minors}
\DeclareMathOperator{\Row}{Row}
\DeclareMathOperator{\K}{\\K}
\DeclareMathOperator{\Ord}{\mathcal{O}}
\DeclareMathOperator{\Pscr}{\mathscr{P}}
\DeclareMathOperator{\Qscr}{\mathscr{Q}}
\DeclareMathOperator{\Rscr}{\mathscr{R}}
\DeclareMathOperator{\CC}{\mathbb{C}}
\DeclareMathOperator{\G}{\mathbf{G}}
\DeclareMathOperator{\RR}{\mathbb{R}}
\DeclareMathOperator{\uu}{\mathbf{u}}
\DeclareMathOperator{\mm}{\mathbf{m}}
\DeclareMathOperator{\nn}{\mathbf{n}}
\DeclareMathOperator{\vv}{\mathbf{v}}
\DeclareMathOperator{\w}{\mathbf{w}}
\DeclareMathOperator{\x}{\mathbf{x}}
\DeclareMathOperator{\y}{\mathbf{y}}
\DeclareMathOperator{\p}{\mathbf{p}}
\DeclareMathOperator{\rev}{rev}
\DeclareMathOperator{\aff}{aff}
\DeclareMathOperator{\ver}{Vert}
\DeclareMathOperator{\facets}{Facets}
\newcommand{\cover}{\mathrel{\ooalign{$\prec$\cr\hidewidth\hbox{$\cdot$}\cr}}}
\theoremstyle{plain}
	\newtheorem{theorem}{Theorem}[section]
	\newtheorem{lemma}[theorem]{Lemma}
	\newtheorem{proposition}[theorem]{Proposition}
	\newtheorem{corollary}[theorem]{Corollary}
	\newtheorem{conjecture}[theorem]{Conjecture}
        \theoremstyle{definition}
	\newtheorem{definition}[theorem]{Definition}
	\newtheorem{example}[theorem]{Example}
	\newtheorem*{observation}{Observation}
	\newtheorem*{observations}{Observations}
\begin{document}

\begin{abstract} 
 A combinatorial polytope $P$ is said to be projectively unique if it has a single realization up to projective transformations. Projective uniqueness is a geometrically compelling property but is difficult to verify. In this paper, we merge two approaches to projective uniqueness in the literature. One is primarily geometric and is due to McMullen, who showed that certain natural operations on polytopes preserve projective uniqueness. The other is more algebraic and is due to Gouveia, Macchia, Thomas, and Wiebe. They use certain ideals associated to a polytope to verify a property called graphicality that implies projective uniqueness.

 In this paper, we show that that McMullen's operations preserve not only projective uniquness but also graphicality. As an application, we show that large families of order polytopes are graphic and thus projectively unique.\\
 \\ 
Keywords: McMullen's operations, order polytopes, projectively unique polytopes, slack ideals 
\end{abstract}
\maketitle

\section{Introduction} \label{sec:intro}

A combinatorial polytope $P$ is said to be projectively unique if it has a single realization up to projective transformations. In other words, $P$ is projectively unique if any two (embedded) polytopes with its combinatorial structure can be mapped to each other by a projective transformation. Projectively unique polytopes form a very interesting class, where the combinatorics contain all essential information and the realization space is trivial in a very strong sense. In contrast, realization spaces of polytopes can be arbitrarily complicated in general \cite{R-G06}.

The study of projectively unique polytopes for their own sake goes back more than fifty years, with important pioneering work of Perles, Shepard and McMullen (see for instance \cite[Section 4.8]{Gru03},\cite{PS74} and \cite{Mc76}), but the full characterization of all three-dimensional projectively unique polytopes goes back even further, being a consequence of Steinitz's work in the early twentieth century \cite{S22}.  In dimension three, a polytope is projectively unique if and only if it has at most $9$ edges (simplices, square pyramids, triangular prisms and triangular bipyramids). In dimension two, it is a simple exercise to see that only triangles and quadrilaterals are projectively unique.

In higher dimensions, projective uniqueness is much more elusive. In dimension four, there is a list of $11$ combinatorial classes of projectively unique  polytopes conjectured to be complete by Shephard and McMullen (\cite{Mc76}). A weaker, more general question,  posed by Perles and Shephard in \cite{PS74} asks if the number of such combinatorial classes of polytopes in a fixed dimension $d \geq 4$ is even finite. This was answered negatively for $d \geq 96$ in \cite{AZ15}, but remains open for $4\leq d \leq 68$.

Our work merges two ideas from the literature on projective uniqueness. Very recently, the concept of \emph{slack ideals} introduced in \cite{GPRT17,GMTW18-1,GMTW18-2} presents an algebraic take on the study of realization spaces. In \cite{GMTW18-2} a subclass of projectively unique polytopes was defined, the \emph{graphic} polytopes, for which one has an algebraic certificate of projective uniqueness. On the other hand, an important early result in the study of projective uniqueness is due to  \cite{Mc76}, where certain operations on polytopes are introduced and proven to preserve projective uniqueness. Our main result is that these same operations also preserve graphicality. Note that this result neither implies nor is implied by the original McMullen result. This connection gives us an algebraic version of the McMullen's result that can be used to create large families of graphic polytopes, as well as to prove graphicality for particular polytopes of interest. 

Graphic polytopes are not only a subclass of projectively unique polytopes but also a subclass of \emph{morally 2-level} polytopes. Morally 2-level polytopes are those that have $0/1$ generalized slack matrices, and in particular they include all 2-level polytopes.  
Such polytopes play an important role in the theory of semidefinite representations of polyhedra and have been the focus of recent interest. Moreover, they comprise a very large family that includes many interesting polytopes; see for example \cite{ACF18} for combinatorially relevant examples and \cite{BFFFMP19} for a full enumeration in dimension up to $7$.

A perfect candidate to apply these methods is therefore the family of \emph{order polytopes}. Order polytopes were introduced in \cite{S86}. They are constructed from finite posets, and we can translate properties of the poset into properties of its order polytope. They are very interesting objects; their vertices and facets are easy to describe, and furthermore they are 0/1-polytopes and 2-level. Order polytopes offer us a combinatorial window on the phenomena we are studying, since we can understand visually at the level of posets the operations that are being applied to high-dimensonal polytopes, which are much harder to internalize. We use our main result to, in particular, prove that order polytopes from finite ranked posets with no 3-antichain are graphic, and therefore projectively unique. As a side effect, we obtain a tool to generate many low-dimensional, easy to understand, examples of graphic polytopes.

\textbf{Organization.} After this introductory section, section 2 contains background on projective uniqueness, slack ideals and order polytopes, providing a short review of the literature and stating the main results we will use. In section 3, we prove our main result, an algebraic analogue of McMullen's main result from \cite{Mc76}, stating that joins, vertex sums and (some) vertex splits of graphic polytopes are graphic. In section 4, we explore how various natural operations on posets are reflected in their order polytopes, and can be interpreted in terms of McMullen's operations. Finally, in section 5, we combine the results of Sections 3 and 4 to prove that if a finite ranked poset has no antichain of size 3, then its order polytope is graphic, and therefore projectively unique. We also propose some open problems related to graphicality and projective uniqueness of order polytopes.

\textbf{Notation.} As usual in combinatorics, for a positive integer $n$, $[n]:=\lbrace 1,\ldots,n\rbrace$. The sets of nonnegative and positive real numbers are denoted as $\mathbb{R}_{+}$ and $\mathbb{R}_{++}$, respectively. The affine span of a set $S \subseteq \RR^k$ is denoted by $\aff(S)$. Points in $\mathbb{R}^k$ are thought as column vectors, where $\mathbf{0}$ and $\mathbf{1}$ are, respectively, the all-zeros and the all-ones column vectors. For the all-zeros and all-ones row vectors we use the notations $\mathbf{0}^T$ and $\mathbf{1}^T$, respectively. The sizes of these vectors will be clear from the context. The set of real matrices of size $m\times n$ is denoted by $\mathbb{R}^{m\times n}$, and the zero matrix is denoted by $O$. A positive diagonal matrix is a real diagonal matrix with positive entries in the diagonal. Finally, we will use $\preceq$ for the order relation of an arbitrary finite poset and the symbol $\cover$ for the cover relation.

\section{Background} \label{sec:background}
\subsection{Projective Uniqueness and the McMullen operations} \label{sec:mcmullen}
When studying polytopes we usually do not want to consider a specific geometric realization of a polytope, but instead some equivalence class that preserves the properties we are interested in.
We begin by recalling three different types of equivalences between polytopes.
\begin{definition} 
Let $P, Q\subseteq\mathbb{R}^d$ be two full-dimensional polytopes.
\begin{enumerate}[a)]
\item We say that $P$ and $Q$ are \emph{combinatorially equivalent} if their face lattices are isomorphic as posets.
\item We say that $P$ and $Q$ are \emph{projectively equivalent} if there is a projective transformation $\phi:\mathbb{R}^d\dashrightarrow\mathbb{R}^d$ such that $\phi(P)=Q$.
\item We say that $P$ and $Q$ are \emph{affinely equivalent} if there is an affine transformation
    $\psi:\mathbb{R}^d\rightarrow\mathbb{R}^d$ such that $\psi(P)=Q$.

\end{enumerate}
\end{definition}

Recall that a projective transformation $\phi: \mathbb{R}^d\dashrightarrow\mathbb{R}^d$ is defined as
$$\phi(t)=\frac{A\mathbf{t}+\textbf{b}}{\textbf{c}^T\mathbf{t}+\gamma}$$
where $A\in\mathbb{R}^{d\times d}, \mathbf{b}, \mathbf{c}\in\mathbb{R}^d$, and $\gamma\in\mathbb{R}$ with
\[\det\begin{bmatrix}
A & \mathbf{b}\\
\mathbf{c}^T & \gamma
\end{bmatrix}\neq 0.\]

An affine transformation $\psi:\mathbb{R}^d\longrightarrow\mathbb{R}^d$ is defined as $\psi(\mathbf{t})=A\mathbf{t}+\mathbf{b}$ where $A\in\mathbb{R}^{d\times d}$ and $\mathbf{b}\in\mathbb{R}^d$. If $Q=\psi(P)$, then $A$ must be invertible due to the full-dimensionality of $P$ and $Q$.

\begin{observation}
For $P, Q \subseteq \RR^d$ full-dimensional polytopes, 
\[\text{affine equivalence} \Rightarrow \text{projective equivalence} \Rightarrow \text{combinatorial equivalence}.\]
\end{observation}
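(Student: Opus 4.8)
The plan is to check the two implications separately. For the first, I would note that an affine transformation is literally a special case of a projective transformation. Given $\psi(\mathbf{t}) = A\mathbf{t} + \mathbf{b}$ with $A$ invertible (which it must be since $P$ and $Q$ are full-dimensional), take $\mathbf{c} = \mathbf{0}$ and $\gamma = 1$; then $\phi(\mathbf{t}) = (A\mathbf{t}+\mathbf{b})/1 = \psi(\mathbf{t})$, the denominator never vanishes, and the required nondegeneracy condition holds because
\[
\det\begin{bmatrix} A & \mathbf{b} \\ \mathbf{0}^T & 1\end{bmatrix} = \det(A) \ne 0.
\]
Hence $\psi$ is a projective transformation, so affine equivalence implies projective equivalence.

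For the second implication, suppose $\phi(P) = Q$ with $\phi(\mathbf{t}) = (A\mathbf{t} + \mathbf{b})/(\mathbf{c}^T\mathbf{t} + \gamma)$. Since $\phi$ is defined on all of $P$, the affine functional $\mathbf{t} \mapsto \mathbf{c}^T\mathbf{t} + \gamma$ is nonvanishing on the connected set $P$, hence of constant sign there; replacing $(A,\mathbf{b},\mathbf{c},\gamma)$ by its negative if necessary, assume it is positive on $P$. The key computation is that $\phi$ pulls linear inequalities back to linear inequalities: for $\mathbf{a}\in\RR^d$, $\beta\in\RR$, and $\mathbf{t}\in P$, multiplying through by the positive quantity $\mathbf{c}^T\mathbf{t}+\gamma$ gives
\[
\mathbf{a}^T\phi(\mathbf{t}) \le \beta \iff (A^T\mathbf{a} - \beta\mathbf{c})^T\mathbf{t} \le \beta\gamma - \mathbf{a}^T\mathbf{b},
\]
and equality holds on one side exactly when it holds on the other. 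Thus a valid inequality $\mathbf{a}^Tx\le\beta$ for $Q$ corresponds to a valid inequality for $P$, and the face it cuts out of $Q$ pulls back under $\phi$ to the face cut out of $P$ by the corresponding equation. Since the block matrix of $\phi$ lies in $GL_{d+1}$, its inverse defines a projective transformation $\phi^{-1}$, and $\phi$ restricts to a bijection from $P$ onto $Q$; moreover $Q$ avoids the pole hyperplane of $\phi^{-1}$, so running the same argument with $\phi^{-1}$ in place of $\phi$ shows that $\phi$ also carries faces of $P$ to faces of $Q$. Therefore $\phi$ induces an inclusion-preserving bijection between the face lattices of $P$ and $Q$, i.e. a poset isomorphism, so projective equivalence implies combinatorial equivalence.

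The statement is essentially a bookkeeping exercise, so I do not anticipate a genuine obstacle; the only point requiring care is making the face correspondence airtight. One must use the constant-sign property of the denominator to get the clean pullback of inequalities, and one must observe that $\phi$ restricts to a bijection $P\to Q$ with a projective inverse, so that the correspondence between valid inequalities — and hence between faces — is a genuine bijection rather than a one-directional map. Once this is in place, the preservation of inclusions is immediate and the argument closes.
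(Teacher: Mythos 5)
The paper states this Observation without proof, treating it as standard background, so there is no argument of the authors' to compare against. Your proof is correct and is exactly the standard verification one would supply: embedding affine maps as projective maps with $\mathbf{c}=\mathbf{0}$, $\gamma=1$, and using the constant sign of the denominator on $P$ to pull valid inequalities (and hence faces) back and forth along $\phi$ and $\phi^{-1}$, yielding the face-lattice isomorphism.
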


When studying geometric realizations of polytopes, projective transformations are the largest canonical class of maps from $\RR^d$ to $\RR^d$ that preserve the combinatorics of a polytope, so it is natural to
 consider realizations of polytopes up to projective equivalence. Occasionally, there is only one such realization.

\begin{definition} 
We say that a full-dimensional polytope $P$ is \emph{projectively unique} if any full-dimensional polytope that is combinatorially equivalent to $P$ is also projectively equivalent to $P$.
\end{definition}

\begin{example} The case of polygons illustrates the distinction between these three notions of equivalence. Any pair of triangles (or more generally, $d$-simplices) are affinely and thus projectively equivalent. Since affine transformations preserve parallel lines, a quadrilateral $Q$ is affinely equivalent to the square if and only $Q$ is a parallelogram. However, all quadrilaterals are projectively equivalent; that is, the square is projectively unique. For $m \geq 5$, the $m$-gon is not even projectively unique.
\end{example}

One of the biggest problems in the study of projectively unique polytopes is that we have few ways of constructing new examples. One of the most well-known ways, and the one we will apply in this paper, is to use certain operations proposed by McMullen \cite{Mc76} that preserve projective uniqueness. 

 The simplest operation we will consider is taking the dual of a polytope.
  \begin{definition}
Two polytopes $P$ and $P^*$ are \emph{duals} of each other if their face lattices are antisomorphic, that is, if there is an order-reversing bijection between these lattices.
  \end{definition}
  Note that a polytope $P \subseteq \RR^d$ containing the origin in its interior and its polar $P^\circ:=\{\x : \x^T\y \leq 1, \textrm{ for all } \y \in P\}$ are dual to each other. It can be shown that the dual of a projectively unique polytope is projectively unique.

  Apart from the dual, McMullen considers three additional constructions.
\begin{definition} \label{j}
  Let $P$ and $Q$ be polytopes of respective dimensions $d$ and $e$.
  \begin{enumerate}
  \item  Let $\hat{P}$ and $\hat{Q}$ be embeddings of $P$ and $Q$ in $\RR^{d+e+1}$ with nonintersecting affine spans whose underlying linear spaces intersect trivially. We define the \emph{join} of $P$ and $Q$ to be $P\vee Q:=\conv(\hat{P}\cup\hat{Q})$.
    
  \item Let $\vv$ and $\w$ be vertices of $P$ and $Q$, respectively. Let $\hat{P}$ and $\hat{Q}$ be embeddings of $P$ and $Q$ in $\RR^{d+e}$ whose affine spans intersect in a single point $\p$ which is the image of both $\vv$ and $\w$. Then  $P\oplus_{(\vv, \w)} Q:=\conv(\hat{P}\cup \hat{Q})$ is called the \emph{vertex sum} of $P$ and $Q$ along the pair $(\vv,\w)$. We will also write $P \oplus_{\p} Q := P \oplus_{(\vv, \w)} Q$. 


  \item Let $\p$ be a vertex of $P$.
    The polytope
\[ P_{\p}:=\conv(\lbrace (\mathbf{w},0):\mathbf{w}\in\ver(P),\mathbf{w}\neq\mathbf{p}\rbrace\cup\lbrace (\mathbf{p},1),(\mathbf{p},-1)\rbrace)\]
is called the \emph{vertex split} of $P$ along $\mathbf{p}$.

\end{enumerate}
\end{definition}

Note that we are sometimes identifying the constructions with specific embeddings for brevity of exposition, although we are interested in the combinatorial equivalence classes of these constructions. The combinatorial structure of all of these constructions is well known.

\begin{observation}
Let $P$ and $Q$ be polytopes with dimensions $d$ and $e$ (appropriately embedded, depending on the operation), vertex sets $V$ and $W$, and facet sets $\mathcal{F}$ and $\mathcal{G}$ respectively.  We then have the following structure.

\begin{tabular}{c|c|c|c}
Polytope   & dimension & vertex set    & facet set \\ \hline
$P \vee Q$ & $d+e+1$ & $V \cup W $ & $ \begin{array}{c} \{F \vee Q : F \in \mathcal{F}\}\\ \cup \\ \{P \vee G : G \in \mathcal{G}\} \end{array}$ \\ \hline
$P\oplus_{\p} Q$ & $d+e$ & $ \begin{array}{c}(V \setminus \p) \\ \cup \\ (W \setminus \p) \\ \cup \\ \{\p\} \end{array}$ &
$ \begin{array}{c} \{F \oplus_{\p} Q : \p \in F \in \mathcal{F}\}\\ \cup \\ \{P \oplus_{\p} G : \p \in G \in \mathcal{G}\} \\ \cup \\ \{F \vee G : \p \not \in F \in \mathcal{F} , \p \not \in G\in \mathcal{G}  \} \end{array}$ \\ \hline
$P_{\p}$ & $d+1$  & $(V \setminus \p) \cup \{\widehat{\p},\overline{\p}\}$ &
$\begin{array}{c} \{\conv(F \cup \{\widehat{\p},\overline{\p}\}) : \p \in F \in \mathcal{F}\} \\ \cup \\
                 \{ \conv(F \cup \{\widehat{\p}\}) : \p \not \in F \in \mathcal{F}\} \\ \cup \\  \{ \conv(F \cup \{\overline{\p}\}) : \p \not \in F \in \mathcal{F}\} \end{array}$\\
\end{tabular}

Note that $\widehat{\p}$ and $\overline{\p}$ denote $(\p,-1)$ and $(\p,1)$.
\end{observation}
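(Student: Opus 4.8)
The plan is to verify the three columns --- dimension, vertex set, facet set --- for each of the three constructions directly, using the concrete embeddings fixed in Definition~\ref{j}; these are classical facts, so the emphasis is on a clean self-contained argument. The dimension is the easiest part: each polytope is by construction the convex hull of the embedded copies $\hat P$ and $\hat Q$ (together with the two points $(\p,1)$, $(\p,-1)$ in the case of $P_\p$), so its dimension is the dimension of the affine hull of that finite point set. For $P\vee Q$ the hypothesis that $\aff(\hat P)$ and $\aff(\hat Q)$ are disjoint with trivially intersecting linear parts says exactly that $\aff(\hat P\cup\hat Q)$ has the largest possible dimension $d+e+1$; for $P\oplus_\p Q$ the two affine spans have dimensions $d$ and $e$ and meet in a single point, so their affine hull has dimension $d+e$; and for $P_\p$ the points $(\x,t)$ with $\x\in\ver(P)\setminus\{\p\}$ and $t=0$ span a $d$-dimensional subspace of the hyperplane $\{(\x,t):t=0\}$, to which $(\p,\pm1)$ add one further dimension.

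For the vertices, note first that since each polytope is explicitly a convex hull of the listed points, its vertex set is contained in that set; it remains to show each listed point is extreme, for which I would exhibit a linear functional uniquely maximized there. For $P\vee Q$ and $P\oplus_\p Q$, I would first pick coordinates adapted to the (affine) direct-sum structure of the ambient space, so that a general point is either $\lambda\hat p+(1-\lambda)\hat q$ for a unique choice of $\hat p\in\hat P$, $\hat q\in\hat Q$ and $\lambda\in(0,1)$, or else lies in $\hat P\cup\hat Q$. Given a vertex $\vv$ of $P$ with a functional on $\aff(\hat P)$ strictly maximized there, extend it to the ambient space so that on $\hat Q$ it is extremely negative (for the join) or nonpositive with equality only at $\p$ (for the vertex sum); a routine estimate shows the extension is uniquely maximized at $\vv$. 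The same works symmetrically for vertices of $Q$, while the shared vertex $\p$ of a vertex sum is handled by a functional that is strictly negative away from $\p$ on both $\hat P$ and $\hat Q$. For $P_\p$, the coordinate $t$ is uniquely maximized at $\overline{\p}=(\p,1)$ and uniquely minimized at $\widehat{\p}=(\p,-1)$, and a vertex $\w\neq\p$ of $P$ is extreme for a functional maximizing $\w$ over $P$, perturbed by a sufficiently small multiple of $t$.

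The facets are the substantive part. For each listed facet I would exhibit a supporting hyperplane. Given a facet $F$ of $P$ containing $\p$, defined by an inequality $\ell_P\le 0$ with $F=\{\ell_P=0\}$, extend $\ell_P$ to the ambient space so that it vanishes identically on $\aff(\hat Q)$; the induced face of $P\vee Q$ (respectively $P\oplus_\p Q$) is then $\conv(F\cup\hat Q)$, which is precisely $F\vee Q$ (respectively $F\oplus_\p Q$) and has dimension one less than the whole polytope; the facets coming from $Q$ are symmetric. For a mixed facet $F\vee G$ of $P\oplus_\p Q$, normalize facet inequalities $\ell_P\le 1$ of $P$ and $\ell_Q\le 1$ of $Q$ whose zero sets $F$ and $G$ avoid $\p$, and add them: the sum is valid on $P\oplus_\p Q$ with tight set $\conv(F\cup G)$, and since $\aff(F)\cap\aff(G)\subseteq\aff(\hat P)\cap\aff(\hat Q)=\{\p\}$ while neither $\aff(F)$ nor $\aff(G)$ contains $\p$, they are disjoint and $\conv(F\cup G)=F\vee G$. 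For $P_\p$: when $\p\in F$, the inequality $\ell_P\le 0$ in the first $d$ coordinates already cuts out $\conv(F\cup\{\widehat{\p},\overline{\p}\})$; when $\p\notin F$, the tilted inequalities obtained by adding $\pm c\,t$ to $\ell_P$, where $c=-\ell_P(\p)>0$, make exactly one of $\widehat{\p},\overline{\p}$ tight and cut out $\conv(F\cup\{\overline{\p}\})$ or $\conv(F\cup\{\widehat{\p}\})$.

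The hardest step is to show these facet lists are complete. For $P\vee Q$ and $P\oplus_\p Q$, take an arbitrary facet, with supporting affine functional $\ell$; restrict $\ell$ to $\aff(\hat P)$ and to $\aff(\hat Q)$, obtaining valid inequalities for $P$ and $Q$ with tight faces $F$ and $G$ (possibly empty, or improper). Since $\aff(\hat P)$ and $\aff(\hat Q)$ meet at most in $\p$ and the two restrictions necessarily agree there, $\p$ lies either in both of $F,G$ or in neither; evaluating $\ell$ on a general point $\lambda\hat p+(1-\lambda)\hat q$ shows the facet equals $\conv(F\cup G)$, which is a vertex sum $F\oplus_\p G$ in the first case and a join $F\vee G$ in the second. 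Comparing the known facet dimension with $\dim(F\oplus_\p G)=\dim F+\dim G$ and $\dim(F\vee G)=\dim F+\dim G+1$ then forces $F$ and $G$ to be exactly the configurations listed; in particular this rules out the tempting possibility that $F$ and $G$ are both proper faces but neither is a facet, since then $\conv(F\cup G)$ is too low-dimensional, and it also disposes of the degenerate cases where a restriction is constant. For $P_\p$ the completeness argument is a short case analysis on how many of $\widehat{\p},\overline{\p}$ lie on a given facet: it cannot be none, for then the facet would lie in $\{t=0\}$, a hyperplane with points of $P_\p$ strictly on both sides; if it is two, the supporting hyperplane has no $t$-term and so induces a facet of $P$ through $\p$; and if it is exactly one, the computation above matches the facet with one arising from a facet of $P$ that avoids $\p$. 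The single real difficulty throughout is keeping the dimension bookkeeping tight enough to be certain no facet is overlooked.
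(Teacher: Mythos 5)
The paper itself offers no proof of this observation: it is stated as ``well known'' and used as a black box, so there is nothing internal to compare your argument against. Your proposal is a correct, self-contained verification along the standard lines one would expect: dimensions from the affine-hull hypotheses in Definition~\ref{j}, vertices by exhibiting functionals prescribed independently (or consistently at $\p$) on the two affine spans, facets by constructing supporting hyperplanes, and completeness by restricting an arbitrary supporting functional to $\aff(\hat P)$ and $\aff(\hat Q)$ and forcing the tight faces $F,G$ by dimension count. The completeness step is indeed where the content lies, and your case analysis (both faces contain $\p$, neither does, or one restriction is degenerate) is the right one. Two small points are glossed but recoverable: for the mixed facets $F\vee G$ of $P\oplus_{\p}Q$, disjointness of $\aff(F)$ and $\aff(G)$ alone does not give $\dim\conv(F\cup G)=\dim F+\dim G+1$; one also needs the direction spaces of $\aff(F)$ and $\aff(G)$ to meet trivially, which follows because the affine spans of $\hat P$ and $\hat Q$ meet in a single point and hence have complementary direction spaces in $\RR^{d+e}$. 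And in the completeness argument for the join, the case $F=\emptyset$ is genuinely realized when $P$ is a point (the base of a pyramid), so the table is only literally correct under the convention that the empty face counts as the unique facet of a $0$-polytope; your dimension bookkeeping rules this case out only when $d\geq 1$. Neither issue affects the soundness of the argument.
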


%
%
%

In \cite{Mc76}, McMullen shows that these three operations preserve projective uniqueness, under certain mild conditions for the vertex splitting operation.

\begin{theorem}[\cite{Mc76}] \label{McOperations}
Let $P\subseteq\mathbb{R}^d$ and $Q\subseteq\mathbb{R}^e$ be two full-dimensional polytopes. Then:
\begin{enumerate}
\item $P \vee Q$ is projectively unique if and only $P$ and $Q$ are projectively unique, 
\item if $P$ and $Q$ are projectively unique, then so is $P\oplus_{(\vv,\w)} Q$ for any vertices $\vv$ of $P$ and $\w$ of $Q$, and 
\item  if $P$ is projectively unique and is not the vertex sum of two polytopes at $\p$, then $P_{\p}$ is projectively unique. 
\end{enumerate}
\end{theorem}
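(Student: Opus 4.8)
The plan is to treat all three parts with one two-step template. Given a polytope $R$ combinatorially equivalent to the construction in question, the first step uses the facet and vertex data recorded in the Observation to show that the combinatorics \emph{force} $R$ to decompose geometrically as the same construction applied to certain faces of $R$, which are themselves realizations of the constituent polytopes; the second step shows that projective transformations of the constituents assemble into a single projective transformation of the ambient space compatible with the construction. Together these make any two realizations projectively equivalent once the constituents are projectively unique.

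For the join, the facet list shows that $P$ and $Q$ are themselves faces of $P\vee Q$ — namely $\bigcap_{G}(P\vee G)$ and $\bigcap_{F}(F\vee Q)$ — of complementary dimensions $d$ and $e$, with disjoint affine spans whose directions sum to all of $\RR^{d+e+1}$. So a realization $R$ carries faces $P'\cong P$ and $Q'\cong Q$ with the same incidences, and a short convexity argument (passing to homogeneous coordinates, $\RR^{d+e+2}$ is the direct sum of the $(d+1)$- and $(e+1)$-dimensional subspaces spanned by the lifts of $P'$ and $Q'$, so every point of $R$ lies on a unique segment from $P'$ to $Q'$) shows $R$ is the geometric join of $P'$ and $Q'$. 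The gluing step is then clear in those homogeneous coordinates: any pair of linear automorphisms of the two subspaces combine into a block-diagonal automorphism of $\RR^{d+e+2}$, and projectivizing absorbs the scalar ambiguity between the blocks. This gives the ``if'' direction, and the ``only if'' direction is immediate because $P$ and $Q$ reappear as faces of every realization of $P\vee Q$, and an ambient projective transformation, being defined on the bounded faces $P'$ and $Q'$, restricts to projective transformations of $\aff(P')$ and $\aff(Q')$.

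For the vertex sum the template runs almost verbatim, with $P$ and $Q$ now faces of $P\oplus_{(\vv,\w)}Q$ meeting exactly in $\p$, with $\aff(P')\cap\aff(Q')=\{\p'\}$ and $\aff(P')+\aff(Q')$ everything; the convexity argument again identifies $R$ with the geometric vertex sum of realizations $P'\cong P$ and $Q'\cong Q$ glued at $\p'$. For the gluing step one may put $\p'$ at the origin, normalize the two projective transformations so their denominators have constant term $1$, and take the block-diagonal numerator matrix together with the concatenated denominator vector; this defines a projective transformation of $\RR^{d+e}$ restricting correctly to each factor. Since $P$ and $Q$ are projectively unique we may choose such transformations (also arranging that the gluing vertex goes to $\p'$, using that a projectively unique polytope realizes all its combinatorial automorphisms projectively), and the statement follows.

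The vertex split $P_{\p}$ is the delicate case, and it is here that the hypothesis ``$P$ is not a vertex sum at $\p$'' must be used. From a realization $R$ one must recover two vertices $u,v$ playing the roles of $\widehat\p$ and $\overline\p$ together with a flat playing the role of the base hyperplane $\{x_{d+1}=0\}$, and then show that, after a projective transformation, $R$ has the normal form $\conv(\{(\w,0)\}\cup\{(\p',1),(\p',-1)\})$ with the collapsed polytope $\conv(\{\w\}\cup\{\p'\})$ a realization of $P$. The obstacle is that the remaining vertices of $R$ need not be coplanar a priori, so the base flat is not given to us: identifying it requires analysing the vertex figures at $u$ and at $v$ (each combinatorially the vertex figure of $\p$ in $P$) and the ``equatorial'' cross-section separating them, and it is precisely a vertex-sum decomposition of $P$ at $\p$ that would permit tilting the two halves independently and so destroy uniqueness. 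I expect this recovery of the normal form to be the main difficulty; once it is in place, projective uniqueness of $P$ pins down the collapsed polytope and a gluing step as above finishes the proof. A natural alternative worth trying is to dualize, describing $(P_{\p})^{*}$ in terms of $P^{*}$ and reducing part (3) to the already-handled operations on the dual side.
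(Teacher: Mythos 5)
You should first note that the paper does not prove this theorem at all: it is quoted from McMullen \cite{Mc76} as background, so there is no in-paper argument to compare against, and your attempt has to be judged against McMullen's original construction-by-construction proof.

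Your treatment of parts (1) and (2) is essentially sound and follows the standard route: recover $P$ and $Q$ as faces of any realization $R$ (as intersections of the facets listed in the Observation), use a dimension count in homogeneous coordinates to see that $R$ is forced to be the geometric join (resp.\ vertex sum) of those faces, and then assemble the two projective maps block-diagonally. Two small points deserve care. In the ``only if'' direction of (1), the projective transformation carrying $P_1\vee Q_0$ to $P_2\vee Q_0$ need not carry the $P$-face to the $P$-face when $P$ and $Q$ are combinatorially equivalent; one has to argue through the chain $P_1\sim Q_0\sim P_2$ in that case. In (2), your block construction only restricts correctly to each factor if both normalized maps fix the gluing point, i.e.\ $\mathbf{b}=\mathbf{0}$ in both numerators; you do arrange this, but it is the reason the normalization at $\p'$ is not optional.

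The genuine gap is in part (3), and you have in effect conceded it: the entire content of that statement is the step you describe as ``the main difficulty'' and then leave as an expectation. Concretely, given an arbitrary realization $R$ of $P_{\p}$, one must produce a projective transformation after which the vertices other than $\widehat{\p}$ and $\overline{\p}$ lie in a common hyperplane and project (from the line through $\widehat{\p}$ and $\overline{\p}$) to a realization of $P$; it is exactly here that the hypothesis that $P$ is not a vertex sum at $\p$ must be invoked, since a vertex-sum decomposition is what allows the two ``halves'' of the link of the split edge to be tilted independently, producing non-projectively-equivalent realizations. Without an actual argument that the non-vertex-sum hypothesis forces the base flat to exist (McMullen does this by analysing how the facets through $\widehat{\p}$ and through $\overline{\p}$ constrain one another), part (3) is unproved. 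Your fallback suggestion of dualizing does not rescue this: the dual of the vertex split is the facet wedge (a subdirect product), which is not a join or a vertex sum, so it is not covered by parts (1) and (2) and carries an exactly dual version of the same difficulty.
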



These operations are enough to construct from direct sums of simplices (easily shown to be projectively unique) all the $11$ known projectively unique $4$-polytopes, but necessarily cannot produce all projectively unique polytopes, as they generate only a finite list of $d$-dimensional examples for every fixed $d$. In fact they are not enough to generate even all projectively unique $5$-polytopes from the lower dimensional ones \cite[Theorem 4.4.1]{Wiebe}. Nevertheless, they are a very useful tool to construct new examples from existing ones. 

For some of the objects we will be studying, it will be useful to introduce the dual operations to vertex splitting and vertex sum that we will call, respectively, \emph{facet wedging} and \emph{facet product}. We will start by defining the facet wedge.
\begin{definition} \label{fw}
  Let $P\subseteq\mathbb{R}^d$ be a full-dimensional polytope and $F$ be a facet of $P$.
Then the polytope
\begin{align*}
P_F :=&  \conv(\lbrace (\mathbf{v},0) :{\mathbf{v}}\in \ver(P)\setminus \ver(F) \rbrace\cup\lbrace(\mathbf{v},1):{\mathbf{v}}\in \ver(P)\setminus \ver(F)\rbrace\\
 &\cup\lbrace(\mathbf{w},0) :\mathbf{w}\in \ver(F)\rbrace)
 \end{align*}
is called the \emph{facet wedge} of $P$ along $F$. 
\end{definition}
Note that if $P$ and $P^\ast$ are dual, then the vertex split of $P$ at $\p$ is dual to the facet wedge of $P^\ast$ along the facet $F$ that is dual to $\p$. Thus we can translate Theorem \ref{McOperations} (3) into a result about facet wedges.

We now turn our attention to facet products. This is a special case of a \emph{subdirect product}, introduced in \cite{Mc76} as a dual to the \emph{subdirect sum}, an operation that generalizes vertex sum.

\begin{definition} \label{fp}
  Let $P\subseteq\mathbb{R}^d$ and $Q\subseteq\mathbb{R}^e$ be two full-dimensional polytopes and $\hat{F}$ and $\bar{F}$ facets of $P$ and $Q$, respectively. We define the facet product of $P$ and $Q$ with respect to $\hat{F}$ and $\bar{F}$,  which we denote by $P \otimes_F Q$, as $(P^\ast \oplus_{\p} Q^\ast)^\ast$ where the vertex sum is with respect to the vertices of $P^\ast$ and $Q^\ast$ that are dual to $\hat{F}$ and $\bar{F}$, respectively.
\end{definition}

In practical terms, a precise geometric description of this polytope is not needed, as we are mostly concerned with its combinatorial structure.

\subsection{Slack Ideals and Graphic Polytopes}
Gouveia, Pashkovich, Robinson, and Thomas \cite{GPRT17} introduced the notion of the slack ideal of a polytope in order to study its positive semidefinite lifts. The first and last of these authors, along with Macchia and Wiebe \cite{GMTW18-1, GMTW18-2} then applied slack ideals to give an algebraic criterion for projective uniqueness. We now review the key definitions and results from these papers that form the starting point of our own work.

\begin{definition} 
  Let $P\subseteq\mathbb{R}^d$ be a full-dimensional polytope with vertices $\mathbf{v}_1,\ldots,\mathbf{v}_n$ and facets $F_1,\ldots,F_m$. Then $P=\lbrace \mathbf{t}\in\mathbb{R}^d: A\mathbf{t}+\mathbf{b}\geq\mathbf{0}\rbrace$ for some $A=[a_{ij}]_{m\times d}\in\mathbb{R}^{m\times d}$, $\mathbf{b}\in\mathbb{R}^m$, and such that $F_i=\lbrace \mathbf{t}\in P:a_{i1}t_1+\cdots+a_{id}t_d +b_i=0\rbrace$ for all $i=1,\ldots,m$.

If $h_i(t):=a_{i1}t_1+\cdots+a_{id}t_d +b_i$ for $i=1,\ldots,m$, then the matrix defined as $[h_i(\mathbf{v}_j)]_{m\times n}$ is called a \emph{slack matrix} of $P$. If we take a slack matrix of $P$ and replace each non-zero entry with a distinct variable, we obtain the \emph{symbolic slack matrix} of $P$.
\end{definition}
\begin{observations}
\text{}
\begin{itemize}
\item Slack matrices and the symbolic slack matrix of a polytope depend on the ordering of the vertices and the facets. So whenever we talk about these matrices, we will implicitly fix orderings on the vertices and the facets.
\item If we scale $h_i$ by a positive real number, we still obtain the same embedded polytope $P$. Thus if $S$ is a given slack matrix of $P$, then so is $DS$, where $D\in\mathbb{R}^{m\times m}$ is a positive diagonal matrix. Furthermore, all of the slack matrices of $P$ are of this form.
\end{itemize}
\end{observations}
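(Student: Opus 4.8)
Of the two observations, only the second requires an argument; the first is immediate from the definition, since reordering the $\mathbf{v}_j$ permutes columns and reordering the $F_i$ permutes rows (and the attached functionals $h_i$) of $[h_i(\mathbf{v}_j)]$, so once orderings are fixed there is nothing to check. For the second, the plan is to prove the two inclusions separately.

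For the easy inclusion, given a slack matrix $S = [h_i(\mathbf{v}_j)]$ and a positive diagonal matrix $D = \operatorname{diag}(d_1,\dots,d_m)$, I would set $\tilde h_i := d_i h_i$. Because $d_i > 0$, the inequality $\tilde h_i \ge 0$ defines the same halfspace as $h_i \ge 0$, so $\tilde h_1,\dots,\tilde h_m$ is again a valid system of facet-defining functionals for the same embedded $P$ in the same order, and its slack matrix is precisely $[\tilde h_i(\mathbf{v}_j)] = DS$. For the reverse inclusion, let $S' = [h'_i(\mathbf{v}_j)]$ be any slack matrix of $P$ for the fixed orderings, with $h'_i(\mathbf{t}) = a'_{i1}t_1 + \dots + a'_{id}t_d + b'_i$ and $F_i = \{\mathbf{t}\in P : h'_i(\mathbf{t}) = 0\}$. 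The crucial point is that since $P$ is full-dimensional, each facet $F_i$ has dimension $d-1$, so $\aff(F_i)$ is a hyperplane and is the \emph{only} hyperplane containing $F_i$; as both $\{h_i = 0\}$ and $\{h'_i = 0\}$ contain $F_i$, they must both equal $\aff(F_i)$, whence $h'_i = c_i h_i$ for some nonzero scalar $c_i$. Full-dimensionality also forces $h_i$ to take a strictly positive value on $P$ (it is nonnegative and not identically zero there), and likewise $h'_i$, so $c_i > 0$. Taking $D := \operatorname{diag}(c_1,\dots,c_m)$, a positive diagonal matrix, then gives $h'_i(\mathbf{v}_j) = c_i h_i(\mathbf{v}_j)$ for all $i,j$, that is, $S' = DS$.

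I expect no serious obstacle: the only step needing care is the uniqueness of the supporting hyperplane of a facet, and this is exactly where the full-dimensionality hypothesis on $P$ is used — it guarantees $\dim F_i = d-1$, without which a facet-defining functional would only be determined up to the linear span of $P$ and the diagonal-scaling description would fail.
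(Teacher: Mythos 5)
Your proof is correct and follows the same reasoning the paper intends: the forward direction is exactly the paper's one-line remark about rescaling the $h_i$, and the converse (which the paper asserts without argument) is correctly supplied via the uniqueness of the supporting hyperplane $\aff(F_i)$ of each facet, which is precisely where full-dimensionality enters. No gaps.
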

It is not hard to see that any slack matrix of a $d$-dimensional polytope has rank $d+1$. It is also not hard to check that this is the minimum rank of any matrix with the same support.
\begin{lemma}[\cite{GMTW18-1}] \label{lr}
If  $S_P(x_1,\ldots,x_k)$ is the  symbolic slack matrix of a $d$-polytope $P$ and $\boldsymbol{\alpha}\in\mathbb{R}_{++}^k$, then $\rank S_P(\boldsymbol{\alpha})\geq d+1$.
\end{lemma}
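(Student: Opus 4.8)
The plan is to reduce the statement to a purely combinatorial fact about the support (zero/nonzero pattern) of a slack matrix and then establish that fact by induction on the dimension. The point of departure is the observation that if $S$ is any honest slack matrix of $P$, then, recording its nonzero entries, $S = S_P(\boldsymbol\beta)$ for some $\boldsymbol\beta \in \mathbb{R}_{++}^k$; and for an arbitrary $\boldsymbol\alpha \in \mathbb{R}_{++}^k$ the matrix $S_P(\boldsymbol\alpha)$ has exactly the same support as $S$, since every variable entry is evaluated to a nonzero number while every structural zero is preserved. (This is the only place the hypothesis $\boldsymbol\alpha \in \mathbb{R}_{++}^k$ is needed — all that matters is that $\boldsymbol\alpha$ has no zero coordinate.) So it suffices to prove the following: \emph{any} real matrix $M$ sharing the support of a slack matrix of a $d$-polytope $P$ — that is, with $M_{ij}=0$ precisely when vertex $\vv_j$ lies on facet $F_i$ — has $\rank M \ge d+1$. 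Note that the factorization $S = [A\mid\mathbf b]\left[\begin{smallmatrix}V\\\mathbf1^T\end{smallmatrix}\right]$ shows an honest slack matrix has rank exactly $d+1$, but this does not by itself constrain matrices that only share its support, so a genuine argument is required.

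I would prove the combinatorial claim by induction on $d$. For $d=1$, $P$ is a segment and $M$ is a $2\times 2$ matrix with zero diagonal and nonzero off-diagonal, so $\rank M = 2$. For $d\ge 2$, fix a facet $F$ of $P$, viewed as a full-dimensional $(d-1)$-polytope in its affine hull. The central step is to exhibit inside $M$ a submatrix $M'$ having exactly the support of a slack matrix of $F$: take the rows of $M$ indexed by the facets $F'$ of $P$ adjacent to $F$ (those with $F\cap F'$ a ridge of $P$) and the columns indexed by $\ver(F)$. The facets of the polytope $F$ are precisely the ridges $F\cap F'$ of $P$, and a slack matrix of $F$ has its $(F\cap F',\, w)$ entry equal to zero iff $w\in F'$, which is exactly the vanishing condition for $M_{F',w}$. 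Hence by the inductive hypothesis $\rank M' \ge d$, and therefore $\rank M \ge d$.

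To upgrade this to $\rank M \ge d+1$, bring in the row of $M$ indexed by $F$ itself together with a vertex $v_0 \in \ver(P)\setminus\ver(F)$, which exists because $P$ is full-dimensional. Consider the submatrix of $M$ on rows $\{F' \text{ adjacent to } F\}\cup\{F\}$ and columns $\ver(F)\cup\{v_0\}$. Restricted to the $\ver(F)$-columns, this submatrix is $M'$ with the all-zero $F$-row appended (every $w\in\ver(F)$ lies on $F$), so these columns still span a space of dimension $\ge d$. Every $\ver(F)$-column is zero in the $F$-row, whereas the $v_0$-column is nonzero there since $v_0\notin F$. Consequently the $v_0$-column lies outside the span of the $\ver(F)$-columns, so this submatrix — and hence $M$ — has rank at least $d+1$, completing the induction and the proof.

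The main obstacle is the identification of $M'$ with the support of a slack matrix of the facet $F$: one must verify that the ridges of $P$ contained in $F$ are in bijection with the facets of $P$ adjacent to $F$, and that the incidences match up on the nose. The remaining parts — the base case, the existence of $v_0$, and the rank bookkeeping — are routine.
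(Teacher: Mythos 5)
Your proof is correct. The paper does not prove this lemma in place (it is cited to \cite{GMTW18-1}), but it supplies the intended argument later as the Flag Lemma (Lemma~\ref{lem:flag}): choose a complete flag of faces of $P$ together with compatible facets $G_0,\dots,G_d$ and vertices $\w_0,\dots,\w_d$, and observe that the corresponding $(d+1)\times(d+1)$ submatrix of $S_P(\x)$ is upper triangular with variables on the diagonal, so its determinant is a nonzero monomial and in particular does not vanish at any $\boldsymbol{\alpha}\in\RR_{++}^k$. Your induction on dimension is, at bottom, the same descent through a flag unrolled one step at a time --- passing from $P$ to a facet $F$ and gaining one rank from the pair consisting of the $F$-row and a vertex off $F$ --- but it is packaged around a different key step: the identification of the rows indexed by facets adjacent to $F$ and the columns indexed by $\ver(F)$ with the support of a slack matrix of $F$, which rests on the standard bijection between facets of $F$ and ridges of $P$ contained in $F$. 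That identification is the genuine content of your argument and it checks out, though it is extra work that the flag formulation sidesteps. What your route buys is the slightly stronger statement, which the paper only alludes to in the sentence preceding the lemma, that \emph{any} matrix with the support of a slack matrix (entries of arbitrary nonzero sign) has rank at least $d+1$. What the paper's route buys is an explicit $(d+1)\times(d+1)$ submatrix whose determinant is a monomial; this sharper form is what is actually reused downstream (for instance in the proofs of Lemma~\ref{lem:joinslack} and Theorem~\ref{tvs}), so the Flag Lemma could not simply be replaced by your rank bound there.
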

In fact the rank plays a very important role in characterizing slack matrices. Given a polytope $P$, we call any slack matrix of a polytope combinatorially equivalent to $P$ a \emph{true slack matrix} of $P$. Any matrix that can be obtained from a true slack matrix by scaling columns by positive scalars is called a \emph{generalized slack matrix} of $P$.
\begin{theorem}[\cite{GGKPRT13}, \cite{GMTW18-1}]
Let $P$ be a $d$-polytope with symbolic slack matrix $S_P(x_1,\ldots,x_k)$, and $\boldsymbol{\alpha}\in\mathbb{R}_{++}^k$. Then
\begin{enumerate}[a)]
\item $S_P(\boldsymbol{\alpha})$ is a generalized slack matrix of $P$ if and only if $\rank S_P(\boldsymbol{\alpha})=d+1$
\item $S_P(\boldsymbol{\alpha})$ is a true slack matrix of $P$ if and only if $\rank S_P(\boldsymbol{\alpha})=d+1$ and $\mathbf{1}^T$ belongs to the row span of $S_P(\boldsymbol{\alpha})$.
\end{enumerate}
\end{theorem}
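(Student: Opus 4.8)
The plan is to prove both parts at once, by the standard technique of reconstructing a polytope from a rank-$(d+1)$ specialization of the symbolic slack matrix; two of the four implications are easy, and I would dispatch them first. If $S_P(\boldsymbol{\alpha})$ is a generalized slack matrix of $P$, then it is a positive column-scaling of a true slack matrix $S'$ --- which is a slack matrix of a polytope combinatorially equivalent to $P$, hence of rank $d+1$ as noted just before Lemma~\ref{lr} --- and positive column-scaling is invertible, so $\rank S_P(\boldsymbol{\alpha})=d+1$. If moreover $S_P(\boldsymbol{\alpha})$ is a true slack matrix, it factors as $[A\mid\mathbf{b}]\,M$ where the columns of $M$ are the vertices of that polytope in homogeneous coordinates; then $\mathbf{1}^T$ is a row of $M$, and since $\row(S_P(\boldsymbol{\alpha}))\subseteq\row(M)$ with both subspaces of dimension $d+1$, the two row spaces agree and $\mathbf{1}^T\in\row(S_P(\boldsymbol{\alpha}))$. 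This gives the forward direction in both (a) and (b).

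For the converse of (a), set $S:=S_P(\boldsymbol{\alpha})$ and factor $S=AV$ with $A\in\RR^{m\times(d+1)}$ and $V\in\RR^{(d+1)\times n}$ of rank $d+1$; then $A$ is injective, and no row of $A$ nor column of $V$ vanishes, since every facet of $P$ omits some vertex and conversely. Writing $\mathbf{a}_i^T$ for the rows of $A$ and $\mathbf{v}_1,\dots,\mathbf{v}_n$ for the columns of $V$, we have $\langle\mathbf{a}_i,\mathbf{v}_j\rangle=S_{ij}\geq 0$, with equality exactly when vertex $j$ of $P$ lies on facet $i$. A Gordan-type argument shows the $\mathbf{v}_j$ lie strictly on one side of a linear hyperplane: a nontrivial relation $\sum_j\lambda_j\mathbf{v}_j=\mathbf{0}$ with $\boldsymbol{\lambda}\geq\mathbf{0}$, paired against each $\mathbf{a}_i$, would force every vertex $j$ with $\lambda_j>0$ onto all facets of $P$, which is impossible. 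Choosing $\mathbf{c}$ with $\langle\mathbf{c},\mathbf{v}_j\rangle>0$ for all $j$, put $\mathbf{w}_j:=\mathbf{v}_j/\langle\mathbf{c},\mathbf{v}_j\rangle$ in the affine hyperplane $H:=\{\mathbf{x}:\langle\mathbf{c},\mathbf{x}\rangle=1\}\cong\RR^d$ and $Q:=\conv\{\mathbf{w}_1,\dots,\mathbf{w}_n\}$. This rescaling preserves all incidence data, and $S=[\langle\mathbf{a}_i,\mathbf{w}_j\rangle]_{i,j}\cdot\operatorname{diag}(\langle\mathbf{c},\mathbf{v}_j\rangle)$ exhibits $S$ as a positive column-scaling of $[\langle\mathbf{a}_i,\mathbf{w}_j\rangle]_{i,j}$.

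It remains to show $Q$ is combinatorially equivalent to $P$, with vertices $\mathbf{w}_j$ and facets $G_i:=Q\cap\{\mathbf{x}:\langle\mathbf{a}_i,\mathbf{x}\rangle=0\}$ in the evident correspondence; granted this, $[\langle\mathbf{a}_i,\mathbf{w}_j\rangle]_{i,j}$ is a slack matrix of $Q$ and $S$ is a generalized slack matrix of $P$, proving (a). Each $\mathbf{w}_j$ is a vertex of $Q$: were it a convex combination of the others, pairing that relation against the $\mathbf{a}_i$ vanishing at $\mathbf{w}_j$ would put some $\mathbf{w}_k$, $k\neq j$, on every facet of $P$ through vertex $j$, forcing vertex $k=$ vertex $j$ since the intersection of those facets is a single vertex; and the $\mathbf{w}_j$ are distinct because distinct vertices of $P$ have distinct facet-sets, so the columns of $S$ have distinct supports. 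For the facets I would induct on $d$ (the cases $d\leq 1$ being direct): for the submatrix $S_i$ of $S$ on the columns indexed by the vertices of $F_i$, injectivity of $A$ and the fact that the $\mathbf{w}_j$ lie on $H$ give $\rank S_i=\dim G_i+1$; this is at most $d$ since $G_i$ lies in a hyperplane of $H$, and at least $d$ since one can extract from $S_i$ a specialization $S_{F_i}(\boldsymbol{\gamma})$ with $\boldsymbol{\gamma}$ having positive entries of the symbolic slack matrix of the $(d-1)$-polytope $F_i$, to which Lemma~\ref{lr} applies. So each $G_i$ is a facet of $Q$, combinatorially identified with $F_i$ by the inductive hypothesis, and a connectivity argument on the dual graph of $Q$ rules out further facets: a spurious facet would meet some $G_i$ in a ridge, but each ridge inside $G_i$ has the form $G_i\cap G_k$ and so lies in only the two facets $G_i,G_k$. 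A separation argument then identifies $Q$ with $\{\mathbf{x}\in H:\langle\mathbf{a}_i,\mathbf{x}\rangle\geq 0\text{ for all }i\}$, so the $\mathbf{a}_i$ are exactly the facet functionals of $Q$; since a polytope's face lattice is determined by its vertex--facet incidences, $Q$ and $P$ are combinatorially equivalent.

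Part (b) then follows from the reconstruction: by (a), $S_P(\boldsymbol{\alpha})$ is a generalized slack matrix, so $S_P(\boldsymbol{\alpha})=AV$ as above, and the hypothesis $\mathbf{1}^T\in\row(S_P(\boldsymbol{\alpha}))=\row(V)$ lets us take $\mathbf{c}$ with $\mathbf{c}^TV=\mathbf{1}^T$, i.e.\ $\langle\mathbf{c},\mathbf{v}_j\rangle=1$ for every $j$; then $\mathbf{w}_j=\mathbf{v}_j$, the diagonal scaling above is the identity, and $S_P(\boldsymbol{\alpha})=[\langle\mathbf{a}_i,\mathbf{w}_j\rangle]_{i,j}$ is itself a slack matrix of $Q\cong P$, that is, a true slack matrix of $P$. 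I expect the real obstacle to be the facet-counting step in the third paragraph --- turning the rank hypothesis into control of the facets of the reconstructed $Q$ and excluding spurious ones. The lever is the fact that, when $\rank S=d+1$, the submatrix of $S$ indexed by the vertices of any facet $F_i$ has rank exactly $\dim F_i+1$; this is what ties the zero-pattern of $S$ to the true dimensions of the faces of $Q$ and makes the induction on dimension, together with the dual-graph connectivity of $Q$, go through.
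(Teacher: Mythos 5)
The paper does not actually prove this theorem: it is imported verbatim from \cite{GGKPRT13} and \cite{GMTW18-1}, so there is no in-paper argument to compare yours against. Judged on its own, your proposal is a sound reconstruction of the proof in those references. The two forward implications are handled the standard way (positive column scaling preserves rank; factoring a slack matrix through homogeneous vertex coordinates puts $\mathbf{1}^T$ in a row span of the same dimension $d+1$), and the converse of (a) via a rank-$(d+1)$ factorization $S=AV$, a Gordan-type separation to normalize the columns into an affine hyperplane, and identification of the faces $G_i$ cut out by the rows of $A$ is exactly the route of the cited papers. Your rank computation $\rank S_i=\dim G_i+1$, with the lower bound $\rank S_i\geq d$ obtained by applying Lemma~\ref{lr} to the facet $F_i$, is the correct lever, and the dual-graph connectivity argument does exclude spurious facets once every ridge of every $G_i$ is known to have the form $G_i\cap G_k$. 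The one step you should tighten is the appeal to induction: part (a) applied to $F_i$ only says that the submatrix is a generalized slack matrix of $F_i$, i.e.\ that \emph{some} polytope combinatorially equivalent to $F_i$ realizes it, whereas you need that $G_i$ itself is combinatorially equivalent to $F_i$ via the evident vertex bijection $\mathbf{w}_j\leftrightarrow\mathbf{v}_j$. This is fixable---either strengthen the inductive statement to assert that the reconstructed polytope is combinatorially equivalent by the evident bijection, or note that any two polytopes arising as convex hulls of normalized columns of rank-$(\dim+1)$ factorizations of the same nonnegative matrix are projectively equivalent---but as written the induction does not quite close on its own.
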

Generalized slack matrices have a more natural description than true slack matrices. Moreover, scaling rows and columns is a natural thing to do when studying polytopes up to projective equivalence. Two polytopes $P$ and $Q$ are projectively equivalent if $S_P=DS_QD'$, where $S_P$ and $S_Q$ are slack matrices of $P$ and $Q$ and $D$ and $D'$ are positive diagonal matrices. We can use this to give a characterization for projective uniqueness.
\begin{theorem}[\cite{GMTW18-1}]\label{tpu}
Let $P$ be a full-dimensional polytope. Then $P$ is projectively unique if and only if $P$ has only one generalized slack matrix up to column and row scaling by positive scalars.
\end{theorem}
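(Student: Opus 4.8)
The plan is to translate both sides of the claimed equivalence into statements about slack matrices, and then invoke the dictionary between projective equivalence of polytopes and positive diagonal rescaling of slack matrices. Fix once and for all an ordering of the vertices and facets of $P$, so that the symbolic slack matrix $S_P(x_1,\dots,x_k)$ is a fixed matrix; by the definition of a true slack matrix, every true (hence every generalized) slack matrix of $P$ may be taken to have the same support as $S_P(x_1,\dots,x_k)$, after transporting the ordering along the face-lattice isomorphism coming from the relevant combinatorial equivalence. Recall also from the Observations that two slack matrices of one fixed embedded polytope differ precisely by left multiplication by a positive diagonal matrix.

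The key input is a two-way correspondence for full-dimensional polytopes $R$ and $R'$: (i) if $S' = D_1 S D_2$ for slack matrices $S$ of $R$, $S'$ of $R'$ and positive diagonal $D_1,D_2$, then $R$ and $R'$ are projectively equivalent; and (ii) conversely, if $R' = \phi(R)$ for a projective transformation $\phi$, then some slack matrix of $R'$ is obtained from some slack matrix of $R$ by scaling columns by positive scalars. Statement (i) is the fact recalled just before the theorem. For (ii) I would argue directly: write $\phi^{-1}(\mathbf{s}) = (A'\mathbf{s}+\mathbf{b}')/(\mathbf{c}'^T\mathbf{s}+\gamma')$ and normalize signs so that $\mathbf{c}'^T\mathbf{s}+\gamma' > 0$ on $R'$ (possible since $\phi^{-1}$ has no pole in the bounded set $R'$). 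If $h_i(\mathbf{t}) = \mathbf{a}_i^T\mathbf{t}+b_i$ is a facet functional of $R$, then $h_i(\phi^{-1}(\mathbf{s})) = \tilde h_i(\mathbf{s})/(\mathbf{c}'^T\mathbf{s}+\gamma')$ for an affine functional $\tilde h_i$, and since $h_i \geq 0$ on $R$ and the denominator is positive on $R'$, the functional $\tilde h_i$ is nonnegative on $R'$ and vanishes exactly on the facet $\phi(F_i)$, so it is a valid facet functional there. Evaluating at a vertex $\mathbf{v}_j$ of $R$ with image $\mathbf{w}_j = \phi(\mathbf{v}_j)$ gives $\tilde h_i(\mathbf{w}_j) = (\mathbf{c}'^T\mathbf{w}_j+\gamma')\,h_i(\mathbf{v}_j)$, so the slack matrix $[\tilde h_i(\mathbf{w}_j)]_{i,j}$ of $R'$ equals $S_R \cdot \mathrm{diag}\big((\mathbf{c}'^T\mathbf{w}_j+\gamma')_j\big)$, a positive column rescaling of a slack matrix of $R$. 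Combining (i), (ii), and the Observations yields the clean form: $R$ and $R'$ are projectively equivalent if and only if some slack matrix of $R'$ equals $D_1\cdot(\text{a slack matrix of }R)\cdot D_2$ for positive diagonal $D_1,D_2$.

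Granting this dictionary, both directions of the theorem are formal. For the ``if'' direction, suppose $P$ has a single generalized slack matrix up to positive row and column scaling, and let $Q$ be combinatorially equivalent to $P$; a slack matrix $S_Q$ of $Q$ is then a true slack matrix of $P$, hence a generalized slack matrix of $P$, as is any slack matrix $S_P$ of $P$ itself. By hypothesis $S_Q = D_1 S_P D_2$, so by (i) $Q$ is projectively equivalent to $P$; since $Q$ was arbitrary, $P$ is projectively unique. For the ``only if'' direction, suppose $P$ is projectively unique, fix a slack matrix $S_P$ of $P$, and let $M$ be any generalized slack matrix of $P$, written $M = SD$ with $S$ a true slack matrix of $P$ and $D$ positive diagonal. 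Then $S$ is a slack matrix of some polytope $Q$ combinatorially, hence projectively, equivalent to $P$, so by (ii) and the Observations (absorbing positive row rescalings) $S = D_1 S_P D_2$; therefore $M = D_1 S_P (D_2 D)$. Since $S_P$ is itself a generalized slack matrix of $P$, every generalized slack matrix of $P$ lies in the single orbit of $S_P$ under positive row and column scaling, which is what we wanted.

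The only step beyond bookkeeping is (ii): showing that a projective transformation acts on a slack matrix by positive column scaling (up to the ever-present positive row scaling). The delicate points there are the sign normalization that makes $\mathbf{c}'^T\mathbf{s}+\gamma'$ positive throughout $R'$, which relies on the transformation being pole-free on the polytope, and the verification that the cleared-denominator functionals $\tilde h_i$ are exactly the facet functionals of the image polytope rather than being degenerate or supported on the wrong face. Once (ii) is secured, the remainder is a direct unwinding of the definitions of true and generalized slack matrices.
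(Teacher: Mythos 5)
The paper states this result only as a citation to \cite{GMTW18-1} and gives no proof of its own, so there is no internal argument to compare against; your proof is correct. The dictionary you set up --- projective equivalence of realizations corresponds exactly to two-sided positive diagonal scaling of slack matrices, with the column-scaling half obtained by clearing the denominator of the pole-free projective map at each vertex and checking that the resulting affine functionals are genuine facet functionals of the image --- is precisely the mechanism used in the cited reference, and the two directions of the theorem then follow, as you say, by unwinding the definitions of true and generalized slack matrices.
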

We can extract from this geometric picture an algebraic version. To do that, for any matrix $M(\x)$ of constants and variables and any natural number $e$, denote the determinantal ideal of $e$-minors of $M(\x)$ by $\Minors_e\left(M(\x)\right)$. The condition on the rank of a slack matrix suggests consideration of the following ideal.
\begin{definition}[\cite{GPRT17}]
Let $P$ be a $d$-polytope with symbolic slack matrix $S_P(\mathbf{x})=S_P(x_1,\ldots,x_k)$. We define the \emph{slack ideal} of $P$ as
\[I_P:=\Minors_{d+2}\left(S_P(\mathbf{x})\right) : \left(\prod\limits_{i=1}^k x_i\right)^\infty.\]
\end{definition}

\begin{observation}
Saturation by the product of all variables removes common factors from the terms of a polynomial. More precisely, $\mm f\in I$ for some monomial $\mm$ if and only if $f \in I:\left( x_1 x_2 \cdots x_k\right)^\infty$. A small observation is that this also implies that if $\mm f\in I:\left( x_1 x_2 \cdots x_k\right)^\infty$ for some monomial  $\mm$, then
$f\in I:\left( x_1 x_2 \cdots x_k\right)^\infty$. This property will be used several times later on.

Geometrically, saturating an ideal $I$ has the effect of removing components of the variety $V(I)$ that are contained entirely in a coordinate hyperplane $x_i = 0$. This is a sensible operation for slack varieties because each variable represents the distance from a vertex $\vv$ to a facet that does not contain $\vv$.
\end{observation}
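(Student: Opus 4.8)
The content to establish is the equivalence ``$\mm f \in I$ for some monomial $\mm$'' $\iff$ ``$f \in I:\left( x_1 x_2 \cdots x_k\right)^\infty$'', together with its consequence that the saturated ideal $I:\left( x_1 x_2 \cdots x_k\right)^\infty$ is itself closed under dividing out monomials. The plan is to simply unwind the definition of saturation; no heavy machinery is needed, and the point is to isolate a bookkeeping fact that will be reused in Sections~3--5.

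Write $s := x_1 x_2 \cdots x_k$, so that by definition $I:s^\infty = \{\, g : s^N g \in I \text{ for some } N \in \NN \,\}$. For the backward implication I would observe that if $f \in I:s^\infty$ then $s^N f \in I$ for some $N$, and $s^N$ is a monomial, so one may take $\mm = s^N$. For the forward implication, suppose $\mm f \in I$ with $\mm = x_1^{a_1} \cdots x_k^{a_k}$, and set $N := \max_i a_i$. Then $s^N = \mm \cdot \left( x_1^{N - a_1} \cdots x_k^{N - a_k}\right)$, so $s^N f = \left( x_1^{N-a_1} \cdots x_k^{N-a_k}\right)(\mm f) \in I$ since $I$ is an ideal; hence $f \in I:s^\infty$.

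For the ``small observation'' I would first record that saturation is idempotent: $(I:s^\infty):s^\infty = I:s^\infty$. The inclusion $\supseteq$ is immediate, and for $\subseteq$, if $g \in (I:s^\infty):s^\infty$ then $s^m g \in I:s^\infty$ for some $m$, hence $s^n(s^m g) \in I$ for some $n$, so $s^{m+n} g \in I$ and $g \in I:s^\infty$. Then I would apply the forward implication already proved, but to the ideal $J := I:s^\infty$ in place of $I$: if $\mm f \in J$ for some monomial $\mm$, then $f \in J:s^\infty = J$, which is exactly the asserted implication.

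There is no real obstacle here; the only points requiring a little care are the exponent bookkeeping (the choice $N = \max_i a_i$, which guarantees that $\mm$ divides $s^N$) and the fact that we work in a polynomial ring over a field, so $I$ is genuinely closed under multiplication by monomials. Since these manipulations let one pass freely between ``$\mm f$ lies in the slack ideal'' and ``$f$ lies in the slack ideal'', it is worth stating them once and for all at this point.
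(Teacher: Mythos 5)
Your proof is correct and complete. The paper states this as an unproved observation, so there is no argument to compare against; your definitional unwinding --- the exponent bookkeeping $N = \max_i a_i$ so that $\mm$ divides $(x_1\cdots x_k)^N$ for the forward implication, and the idempotence of saturation to deduce the ``small observation'' --- is precisely the routine verification the authors leave implicit.
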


In \cite{GMTW18-2}, a second ideal associated to a polytope, $T_P$, is introduced in order to study projective uniqueness from an algebraic point of view. This new ideal is toric.
To introduce the ideal, we first define the \emph{non-incidence graph} of a polytope $P$. This is the bipartite graph $\mathbf{G}_P$ on $\facets(P)\sqcup\ver(P)$ with an edge connecting facet $F$ with vertex $\textbf{v}$ if and only if $\textbf{v}\notin F$. The edges of $\mathbf{G}_P$ are thus labeled by the variables that appear in the symbolic slack matrix of $P$.

To every collection $C$ of oriented edges in this graph we can associate a binomial in the following way: 
\begin{itemize}
\item let $x_1,\ldots,x_n$ be the labels (variables) of all edges of $C$ that, according to the orientation, go from $\facets(P)$ to $\ver(P)$
\item let $y_1,\ldots,y_m$ be the labels (variables) of all edges of $C$ that, according to the orientation, go from $\ver(P)$ to $\facets(P)$.
\end{itemize}
The binomial associated to $C$ is $f_C := x_1\cdots x_n-y_1\cdots y_m$. If $C$ is a simple cycle, we implicitly suppose its edges are oriented in order to form a directed cycle. So we can talk of the binomial $f_C$ associated to a simple cycle, which is unique up to sign.
\begin{definition}\label{tcc}
Let $P$ be a polytope. Then
\[ T_P  := \langle f_C: C \text{ is a chordless cycle of } \mathbf{G}_P\rangle.\]
\end{definition}
This is in fact the toric ideal associated to the vertex-edge incidence matrix of $\mathbf{G}_P$ (see \cite{GMTW18-2}). Also $T_P$ is generated by the binomials of all oriented cycles, a consequence of the Cycle-Splitting Lemma (Lemma \ref{lsc}) which will be introduced in the next section.

In general there is no obvious relationship between the ideals $I_P$ and $T_P$. However there is an important special case in which they are indeed related: that of $2$-level polytopes.
\begin{definition} A polytope $P$ is \emph{2-level} if for every facet $F$ of $P$, the vertices of $P$ that are not in $F$ are all contained in a single parallel translate of $\aff(F)$. Equivalently, $P$ is 2-level if $S_P(\mathbf{1})$ is a slack matrix of $P$. We will say that $P$ is \emph{morally 2-level} if $S_P(\mathbf{1})$ is a generalized slack matrix of $P$.
\end{definition}

\begin{theorem}[\cite{GMTW18-2}]\label{m2l}
Let $P$ be a full-dimensional polytope.
\begin{enumerate}
\item $P$ is morally 2-level if and only if $I_P\subseteq T_P$.
\item If $I_P=T_P$, then $P$ is projectively unique.
\end{enumerate}
\end{theorem}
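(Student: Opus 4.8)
The plan is to read both parts of the statement off the complex and positive-real zero sets of $I_P$ and $T_P$, using three inputs: Lemma~\ref{lr}, the rank characterization of (generalized) slack matrices recalled above, and the fact noted in the text that $T_P$ is the toric ideal of the vertex--edge incidence matrix of $\mathbf{G}_P$. From the last fact I would extract two things. (i) $T_P$ is prime and contains no monomial, so $T_P = I(V(T_P))$ and $\prod_i x_i \notin T_P$; moreover every defining binomial $f_C$ is a difference of two monomials of equal degree (a consistently oriented cycle in a bipartite graph has equally many edges in each direction), so $\mathbf{1}\in V(T_P)$. (ii) The positive real points of $V(T_P)$ are exactly the ``rank-one patterns'': $\boldsymbol{\alpha}\in V(T_P)\cap\mathbb{R}_{++}^k$ iff $\alpha_{(F,\mathbf{v})}=\lambda_F\mu_{\mathbf{v}}$ for all edges, for some positive $\lambda_F,\mu_{\mathbf{v}}$ --- equivalently, iff $S_P(\boldsymbol{\alpha})$ is a positive row-and-column rescaling of $S_P(\mathbf{1})$; over $\mathbb{R}_{++}$ this is an honest equality since $\mathbb{R}_{++}$ is torsion-free and divisible, as one sees by taking logarithms and identifying the set with the column space of the incidence matrix. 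Dually, because $\prod_i x_i$ is nonvanishing on $\mathbb{R}_{++}^k$, the saturation changes nothing there, so $V(I_P)\cap\mathbb{R}_{++}^k = V(\Minors_{d+2}(S_P))\cap\mathbb{R}_{++}^k=\{\boldsymbol{\alpha} : \rank S_P(\boldsymbol{\alpha})\le d+1\}$, which by Lemma~\ref{lr} is exactly the set of $\boldsymbol{\alpha}$ for which $S_P(\boldsymbol{\alpha})$ is a generalized slack matrix of $P$.

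For part~(1), the direction ``$I_P\subseteq T_P\Rightarrow$ morally $2$-level'' should be immediate: $\mathbf{1}\in V(T_P)\subseteq V(I_P)\subseteq V(\Minors_{d+2}(S_P))$ (the last inclusion because $\Minors_{d+2}(S_P)\subseteq I_P$), so $\rank S_P(\mathbf{1})\le d+1$, and with Lemma~\ref{lr} this forces equality, so $S_P(\mathbf{1})$ is a generalized slack matrix and $P$ is morally $2$-level. For the converse I would start from $\rank S_P(\mathbf{1})=d+1$ and argue that on the torus every point of $V(T_P)$ gives a matrix that is $S_P(\mathbf{1})$ with rows and columns scaled by units, hence of rank $d+1$; taking Zariski closures gives $V(T_P)\subseteq V(\Minors_{d+2}(S_P))$. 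Then for $f\in I_P$ one picks $N$ with $\left(\prod_i x_i\right)^{N}f\in\Minors_{d+2}(S_P)$; this polynomial vanishes on $V(T_P)$, hence lies in $T_P=I(V(T_P))$, and primeness of $T_P$ together with $\prod_i x_i\notin T_P$ yields $f\in T_P$. So $I_P\subseteq T_P$.

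For part~(2), assuming $I_P=T_P$, part~(1) already gives that $P$ is morally $2$-level and that $V(I_P)\cap\mathbb{R}_{++}^k = V(T_P)\cap\mathbb{R}_{++}^k =: H$, a multiplicative subgroup of $\mathbb{R}_{++}^k$ (the positive rank-one patterns). Every generalized slack matrix of $P$ has the support of $S_P$ with positive entries, so it equals $S_P(\boldsymbol{\alpha})$ for a unique $\boldsymbol{\alpha}\in\mathbb{R}_{++}^k$, and by the first paragraph that $\boldsymbol{\alpha}$ lies in $H$. Given two of them, $S_P(\boldsymbol{\alpha})$ and $S_P(\boldsymbol{\beta})$ with $\boldsymbol{\alpha},\boldsymbol{\beta}\in H$, I would form the coordinatewise quotient $\boldsymbol{\gamma}$ with $\gamma_e=\beta_e/\alpha_e$; since $H$ is a subgroup, $\boldsymbol{\gamma}\in H$, so $\gamma_{(F,\mathbf{v})}=d_Fd'_{\mathbf{v}}$ with $d_F,d'_{\mathbf{v}}>0$, and then $S_P(\boldsymbol{\beta})=D\,S_P(\boldsymbol{\alpha})\,D'$ for the positive diagonal matrices $D,D'$ carrying these entries. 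Thus $P$ has a single generalized slack matrix up to positive row and column scaling, and Theorem~\ref{tpu} concludes that $P$ is projectively unique.

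The hard part will be the forward direction of part~(1), where one has to pass from the saturated determinantal ideal $I_P$ to the toric ideal $T_P$: the argument needs the ``row scaling times column scaling'' description of $V(T_P)$ on the torus, Zariski closure to get $V(T_P)\subseteq V(\Minors_{d+2}(S_P))$, and primeness of $T_P$ to absorb the saturation monomial $\left(\prod_i x_i\right)^{N}$. A secondary point requiring care is the bookkeeping: keeping ``morally $2$-level'' distinct from ``$2$-level'', pinning down exactly which matrices qualify as generalized slack matrices, and checking that the quotient-of-patterns step in part~(2) really does yield positive diagonal matrices (working component by component if $\mathbf{G}_P$ happens to be disconnected).
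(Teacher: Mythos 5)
This theorem is imported verbatim from \cite{GMTW18-2}; the paper states it as background and gives no proof, so there is no internal argument to compare against. Your reconstruction is correct and is essentially the argument of the cited reference: identify the positive (and torus) points of $V(T_P)$ with the rank-one scalings $\alpha_{(F,\mathbf{v})}=\lambda_F\mu_{\mathbf{v}}$, i.e.\ with matrices $DS_P(\mathbf{1})D'$; use Lemma~\ref{lr} together with the rank characterization of generalized slack matrices to translate between moral $2$-levelness and vanishing of the $(d+2)$-minors; pass the minors into $T_P$ via density of the torus points and primeness of the toric ideal, which also absorbs the saturation factor $\left(\prod_i x_i\right)^N$; and finish part~(2) by quotienting two rank-one patterns to produce the diagonal matrices required by Theorem~\ref{tpu}. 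Two small points deserve explicit acknowledgment: your whole argument leans on the identification of $T_P$ (defined via chordless cycles) with the full toric ideal of the vertex--edge incidence matrix of $\mathbf{G}_P$, which this paper also only asserts with a citation; and, as you can observe from your own part~(2), the projective-uniqueness conclusion really only uses the containment $T_P\subseteq I_P$ (plus the existence of at least one generalized slack matrix), a slightly sharper statement than the one claimed.
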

\begin{definition}
  If $I_P=T_P$, we say that $P$ has a \emph{graphic slack ideal} and that $P$ is a \emph{graphic polytope}.
\end{definition}

Theorem \ref{m2l} implies that graphic polytopes are a subset of both morally $2$-level and projectively unique polytopes. Since in any given dimension the number of morally $2$-level polytopes must be finite (see \cite[\S 6]{ACF18}), so is the number of graphic polytopes, hence this must be a much more restrictive condition than simply being projectively unique. However, all the $11$ known examples of $4$-dimensional projectively unique polytopes are indeed graphic.

\subsection{Order Polytopes} \label{sec:orderpoly}
As mentioned above there are many known classes of 2-level polytopes for which one might be able to apply Theorem \ref{m2l}. We will focus on the following combinatorially appealing class of 2-level polytopes introduced by Stanley \cite{S86}.

\begin{definition}
Let $\Pscr=([d],\preceq)$ be a poset. The \emph{order polytope} of $\Pscr$ is
\[\Ord(\Pscr)=\lbrace\mathbf{t}\in\mathbb{R}^d:0\leq t_i\leq 1 \textnormal{ for all } i\in [d], \textnormal{ and } t_i\leq t_j \textnormal{ if } i\preceq j\rbrace.\]
\end{definition}

In our study of order polytopes we will use the following definitions related to posets.

\begin{definition} Let $\Pscr=([d],\preceq)$ be a poset.
  \begin{itemize}
  \item A subset of $[d]$ is a \emph{chain} (respectively \emph{antichain}) if its elements are pairwise comparable (respectively pairwise incomparable.)
  \item A subset $J$ of $[d]$ is called a \emph{filter} of $P$ if whenever $x\in J$ and $y\succeq x$, then $y\in J$. If $S\subseteq [d]$, the set $(S):=\lbrace x\in [d]: x\succeq s \textnormal{ for some } s\in S\rbrace$ is called the \emph{filter generated by $S$}.
  \item The function $\chi^{\Pscr}_S:=[d]\rightarrow\lbrace 0,1 \rbrace$ given by $f(x)=1$ if $x\in S$ and $f(x)=0$ if $x\notin S$ is called the \emph{characteristic function} of $S$. When the poset we are working with is fixed, we denote $\chi^{\Pscr}_S$ simply by $\chi_S$. We can also think of the characteristic function as a vector.
  \item The poset $\Pscr$ is \emph{ranked} if there is a function $\rho:[d]\rightarrow\mathbb{N}$ such that $\rho(y)=\rho(x)+1$ for all cover relations $x\cover y$ in $\Pscr$. (We do not insist that $\rho$ be unique.) The $\emph{rank}$ of a finite ranked poset is defined as the maximum length of a maximal chain.
  \end{itemize}
\end{definition}

There is a bijective correspondence between filters and antichains of $\Pscr$. Given a filter, take its minimal elements to obtain an antichain, and given an antichain, take the filter generated by this antichain.

\begin{theorem}[\cite{S86}]
Let $\Pscr=([d],\preceq)$ be a poset. Then $\Ord(\Pscr)\subseteq\mathbb{R}^d$ is a full-dimensional polytope. Its vertices are precisely the characteristic vectors $\chi_J$ where $J$ is a filter. Thus $\Ord(\Pscr)$ is a $0/1$-polytope. The facets are the following sets
\begin{itemize}
\item $\lbrace \mathbf{t}\in\Ord(\Pscr):t_i = 0\rbrace$ where $i$ is a minimal element of $\Pscr$,
\item $\lbrace \mathbf{t}\in\Ord(\Pscr):t_i =t_j\rbrace$ where $i\cover j$, and
\item $\lbrace \mathbf{t}\in\Ord(\Pscr):t_i = 1\rbrace$ where $i$ is a maximal element of $\Pscr$.
\end{itemize}
\end{theorem}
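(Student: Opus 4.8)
The plan is to establish the three assertions in order, feeding the full-dimensionality and vertex statements into the proof of the facet statement. \emph{Full-dimensionality} is the quickest: every finite poset admits a linear extension, i.e.\ a bijection $\sigma\colon[d]\to[d]$ with $\sigma(i)<\sigma(j)$ whenever $i\prec j$, and at the point with $t_i=\sigma(i)/(d+1)$ one has $0<t_i<1$ for all $i$ and $t_i<t_j$ whenever $i\prec j$, so every defining inequality of $\Ord(\Pscr)$ is strict there; hence $\Ord(\Pscr)$ has nonempty interior and is full-dimensional.

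\emph{Vertices.} If $J$ is a filter, then $\chi_J\in\{0,1\}^d\subseteq[0,1]^d$, and since $i\preceq j$ together with $i\in J$ forces $j\in J$ we get $(\chi_J)_i\le(\chi_J)_j$; thus $\chi_J\in\Ord(\Pscr)$, and being a vertex of the cube $[0,1]^d\supseteq\Ord(\Pscr)$ it is a vertex of $\Ord(\Pscr)$. For the converse I would show every vertex is a $0/1$ vector: if $\vv\in\Ord(\Pscr)$ has a coordinate strictly between $0$ and $1$, then for small $\varepsilon>0$ the two points $\vv^{\pm}$ that agree with $\vv$ on coordinates equal to $0$ or $1$ and equal $v_i\pm\varepsilon$ on coordinates lying in $(0,1)$ both lie in $\Ord(\Pscr)$ — coordinates in $(0,1)$ are shifted by a common amount, so order inequalities among them survive, and inequalities involving a $0$ or $1$ coordinate survive because $\varepsilon$ is small — so $\vv=\tfrac12(\vv^++\vv^-)$ is not a vertex. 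Finally a $0/1$ vector $\chi_S$ lies in $\Ord(\Pscr)$ if and only if $(\chi_S)_i\le(\chi_S)_j$ for all $i\preceq j$, i.e.\ if and only if $S$ is a filter; combining these three facts gives that the vertices are exactly the $\chi_J$ with $J$ a filter.

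\emph{Facets.} I would first cut the defining system down to the stated list. If $i$ is not minimal, pick a minimal $k\prec i$; chaining the cover inequalities along a saturated chain from $k$ to $i$ gives $t_k\le t_i$, which together with $t_k\ge 0$ makes $t_i\ge 0$ redundant; dually $t_j\le 1$ is redundant for $j$ not maximal, and $t_i\le t_j$ for a noncover relation $i\prec j$ is redundant by the same chaining. So $\Ord(\Pscr)$ is cut out exactly by $t_i\ge 0$ ($i$ minimal), $t_j\le 1$ ($j$ maximal), and $t_i\le t_j$ ($i\cover j$). It then suffices to check that each of these inequalities cuts out a $(d-1)$-dimensional face; since the defining hyperplanes are pairwise distinct, the system is automatically irredundant and these $(d-1)$-faces are precisely the facets. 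For $i$ minimal, erasing the $i$-th coordinate identifies $\{t_i=0\}\cap\Ord(\Pscr)$ with the order polytope of the subposet of $\Pscr$ induced on $[d]\setminus\{i\}$ (every constraint mentioning $t_i$ becomes vacuous on that face), which is full-dimensional in $\RR^{d-1}$ by the first part; the face $\{t_j=1\}$ for $j$ maximal is handled dually. For a cover $i\cover j$, identifying the coordinates $t_i$ and $t_j$ should identify $\{t_i=t_j\}\cap\Ord(\Pscr)$ with the order polytope of the $(d-1)$-element poset $\Pscr'$ obtained from $\Pscr$ by merging $i$ and $j$, so this face again has dimension $d-1$.

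The crux is this last identification. One must verify that merging a cover pair really does produce a poset on $d-1$ elements — this is exactly where the \emph{cover} hypothesis is used, since an element strictly between $i$ and $j$ would be forced to coincide with both after merging — and that the filters of $\Pscr'$ biject with the filters of $\Pscr$ lying on the face $\{t_i=t_j\}$, so that the evident coordinate map is an affine isomorphism onto that face. The minimal and maximal cases and the reduction of the inequality system are routine by comparison. If the quotient description proves awkward, an alternative is to exhibit $d$ affinely independent vertices on each candidate facet directly, using the vertex description just established (for $\{t_i=t_j\}$ these are the $\chi_J$ with $\chi_J(i)=\chi_J(j)$, and one counts them via the filters of $\Pscr'$).
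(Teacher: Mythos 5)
The paper does not prove this statement: it is quoted as background from Stanley's paper [S86], so there is no in-paper proof to compare against. Judged on its own merits, your argument is correct and is essentially Stanley's original one. The linear-extension point for full-dimensionality, the perturbation argument showing vertices are $0/1$ combined with the observation that a $0/1$ point lies in $\Ord(\Pscr)$ exactly when its support is a filter, and the reduction of the inequality system to minimal elements, maximal elements, and covers are all standard and sound. You correctly identify the one step that needs genuine care: for a cover $i\cover j$, one must check that contracting the pair yields a bona fide poset $\Pscr'$ on $d-1$ elements (antisymmetry could only fail via an element strictly between $i$ and $j$, which the cover hypothesis excludes) and that the new relations created by transitivity through the merged element are exactly those implied by $t_k\le t_i=t_j\le t_l$, so that the face $\{t_i=t_j\}\cap\Ord(\Pscr)$ is affinely isomorphic to $\Ord(\Pscr')$ and hence $(d-1)$-dimensional. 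Your fallback of exhibiting $d$ affinely independent vertices on each candidate facet, counted via filters of $\Pscr'$, would also work. Nothing in the proposal would fail; it only remains to write out the quotient-poset verification you sketch.
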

In other words, vertices are given by filters (or antichains) and the facets by covers, minimal and maximal elements of $\Pscr$. Each facet of $\Ord(\Pscr)$ is defined by an inequality of the form $t_i\geq0$, $t_i\leq t_j$, or $t_i\leq 1$ depending if the facet comes from a minimal element, a cover, or a maximal element. The notation $F:t_i\geq 0$ will mean that the facet $F$ comes from the minimal element $i$. Analogous notations will be used for the other types of facets.

\begin{example}  $ $
  \begin{enumerate}
  \item The empty poset has exactly one antichain: the empty set. Thus its order polytope is a single point.
  \item If $\Pscr$ is the chain $1\prec 2\prec\ldots\prec d-1\prec d$, then $\Ord(\Pscr)=\lbrace\mathbf{t}\in\mathbb{R}^d:0\leq t_1\leq t_2\leq\ldots\leq t_{d-1}\leq t_d\leq 1\rbrace$; that is, $\Ord(\Pscr)$ is a $d$-simplex.
  \item If $\Pscr$ is an antichain with $d$ elements, then $\Ord(\Pscr)=\lbrace\mathbf{t}\in\mathbb{R}^d:0\leq t_i\leq 1 \text{ for all }1\leq i\leq d\rbrace$; that is, $\Ord(\Pscr)$ is a $d$-cube.
  \end{enumerate}
\end{example}

  \section{Operations on Polytopes that Preserve Graphicality} \label{sec:operations}
  We saw in Section \ref{sec:mcmullen} the operations on (combinatorial) polytopes introduced in \cite{Mc76} by McMullen that preserve projective uniqueness, as seen in Theorem \ref{McOperations}. In this section we will show that these operations also preserve graphicality. Note that this neither implies nor is implied by the results of McMullen, since we have both a stronger hypothesis and a stronger conclusion. To be more precise, we will prove the following graphical version of  Theorem \ref{McOperations}.

  \begin{theorem} \label{thm:main}
    Let $P\subseteq\mathbb{R}^d$ and $Q\subseteq\mathbb{R}^e$ be two full-dimensional polytopes. Then:
\begin{enumerate}
\item $P \vee Q$ is graphic if and only if $P$ and $Q$ are graphic, 
\item if $P$ and $Q$ are graphic, then so is $P\oplus_{(\vv,\w)} Q$ for any vertices $\vv$ of $P$ and $\w$ of $Q$, and
\item if $P$ is graphic and is not the vertex sum of two polytopes at $\p$, then $P_{\p}$ is graphic.
\end{enumerate}
  \end{theorem}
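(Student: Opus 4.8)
The plan is to translate each operation into an explicit description of the symbolic slack matrix and the non-incidence graph $\mathbf{G}$, and then to compare the two ideals $I$ and $T$ directly. Since graphicality means $I_P = T_P$ and we always have the rank inequality of Lemma \ref{lr}, the key is to understand how minors of the slack matrix and chordless cycles of $\mathbf{G}$ behave under the operations. The unifying observation I would exploit is the block structure coming from the Observation on vertex/facet sets: for $P \vee Q$, the symbolic slack matrix is essentially block-diagonal up to the new variables coming from the non-incidences between a vertex of one factor and a facet of the other; for $P \oplus_{\p} Q$, there is one shared vertex $\p$ whose column is split between the two blocks; for the vertex split $P_{\p}$, the column of $\p$ is duplicated and perturbed.

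For the join, I would first show that every chordless cycle of $\mathbf{G}_{P \vee Q}$ either lives entirely inside (the copy of) $\mathbf{G}_P$, or entirely inside $\mathbf{G}_{Q}$ — because a facet of the form $F \vee Q$ is non-incident only to vertices of $P$ not on $F$, and dually for $P \vee G$, so there is no ``crossing'' edge between the $P$-part and the $Q$-part of the graph. Hence $T_{P \vee Q} = T_P + T_Q$ (suitably embedded). On the slack side, the symbolic slack matrix of $P \vee Q$ is block diagonal with blocks $S_P(\x)$ and $S_Q(\y)$; its $(d+e+2)$-minors are governed by products of a $(d+2)$-minor from one block and a $(e+2)$-minor from the other (after accounting for ranks $d+1$ and $e+1$), and saturation lets us peel these apart. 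This should give $I_{P \vee Q} = I_P + I_Q$, and combining the two yields the ``if and only if'' in (1). The vertex sum (2) is similar but with the subtlety that $\p$ is a single shared vertex: I would argue that a chordless cycle passing through the column of $\p$ still decomposes, because any facet containing $\p$ in $P$ (resp. $Q$) becomes a facet not incident to the vertices of the \emph{other} factor via the $F \vee G$ facets, and one checks that the binomials one gets are exactly those of $\mathbf{G}_P$ and $\mathbf{G}_Q$ plus relations already implied; the slack matrix again decomposes after scaling the $\p$-column. Here I only get one implication, matching McMullen.

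For the vertex split (3), the combinatorial description in the Observation shows $\mathbf{G}_{P_{\p}}$ is obtained from $\mathbf{G}_P$ by duplicating the vertex $\p$ into $\widehat{\p}, \overline{\p}$ and adding the two new apex-facet incidences. I would set up the symbolic slack matrix of $P_{\p}$ with the $\p$-column replaced by two columns, related by a rank-one update coming from the last coordinate $\pm 1$, and track how a $(d+3)$-minor expands. The hypothesis that $P$ is \emph{not} a vertex sum at $\p$ is exactly what McMullen needs and what makes the rank behave correctly — I expect it to enter as the statement that the edges of $\mathbf{G}_P$ at $\p$ are not ``separable,'' equivalently that deleting $\p$ does not disconnect the relevant part of the graph, so that no spurious low-rank specialization appears. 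I would show $T_{P_{\p}}$ is generated by lifts of generators of $T_P$ together with the obvious binomial relating the two new columns, and match this with a generating set for $I_{P_{\p}}$.

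The main obstacle I anticipate is part (3), the vertex split: unlike the join and vertex sum, there is no clean block decomposition, the slack matrix genuinely changes rank ($d+1 \to d+2$) with a rank-one perturbation of a column, and the non-vertex-sum hypothesis must be used in an essential but somewhat delicate way to rule out degenerate minors. Making the comparison $I_{P_{\p}} = T_{P_{\p}}$ precise will likely require a careful cofactor/Laplace-expansion argument along the two new columns together with repeated use of the saturation observation (that a monomial multiple of $f$ lying in the saturated ideal forces $f$ itself to lie in it), plus invoking Theorem \ref{m2l}(1) to handle the inclusion $I \subseteq T$ via the morally 2-level reformulation. The join and vertex sum, by contrast, I expect to be comparatively routine once the block structure is written down.
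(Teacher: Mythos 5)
Your overall strategy (block structure of the slack matrix, decomposition of cycles, saturation, and moral $2$-levelness for the inclusion $I\subseteq T$) is the same as the paper's, and for the join your sketch essentially is the paper's proof, modulo a slip: the relevant minors of $S_{P\vee Q}$ are the $(d+e+3)$-minors, which factor as an $r$-minor times an $s$-minor with $r+s=d+e+3$, so by pigeonhole \emph{one} of the two factors is large --- they are not products of a $(d+2)$-minor and an $(e+2)$-minor; also there are no ``crossing'' variables at all, since the off-diagonal blocks of $S_{P\vee Q}$ are identically zero. For (2) and (3), however, there are genuine gaps. In (2), it is not true that the chordless cycles of $\G_{P\oplus_{\p}Q}$ yield ``exactly those of $\G_P$ and $\G_Q$ plus relations already implied'': each facet $F\vee G$ with $\p\notin F$, $\p\notin G$ gives a row that is a concatenation of a row of $S_P$ with a row of $S_Q$, so the rows indexed by $F\vee G_1$ and $F\vee G_2$ have identical support over $\ver(P)$, producing genuinely new chordless $4$-cycles $\uu_1(F\vee G_1)\uu_2(F\vee G_2)\uu_1$ whose $2\times 2$-minor binomials are not inherited from either factor. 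The paper puts these into $I_{P\oplus_{\p}Q}$ by bordering the $2\times2$ block with two triangular submatrices supplied by the Flag Lemma (one from a flag of $Q$ starting at $\p$, one from a flag of $P$ ending at $F$). Cycles meeting both $\ver(P)\setminus\{\p\}$ and $\ver(Q)\setminus\{\p\}$ must cross through $F\vee G$ facets at least twice, hence cannot contain $\p$ by chordlessness, and are then split, using $\p$ as a hub, into one-factor cycles via the Cycle-Splitting Lemma. Neither mechanism appears in your sketch, and ``the slack matrix decomposes after scaling the $\p$-column'' does not substitute for them.

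For (3) the gap is more serious: you never say how the hypothesis that $P$ is not a vertex sum at $\p$ is actually used, and your guessed graph-theoretic translation is not the right one. The paper first reduces, by induction on the number of nodes of $\widehat{\mathcal{F}}$ in a cycle together with the Cycle-Splitting Lemma, to two explicit families: the $4$-cycles $\overline{F}\vv\widehat{F}\w\overline{F}$ and the $8$-cycles $\overline{\p}\,\overline{F}\vv\widehat{F}\widehat{\p}\widehat{G}\w\overline{G}\,\overline{\p}$. These are then handled by exhibiting a path from $\vv$ to $\w$ in $\G_P$ avoiding $\p$ \emph{and all of its neighbors} (not merely $\p$), which allows the cycle to be split into pieces already known to lie in $I_{P_{\p}}$. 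Finally, the hypothesis enters through a converse statement: if deleting $\p$ and its neighbors disconnects $\G_P$, then $S_P$ acquires a block form which, via the characterization of slack matrices as nonnegative matrices whose rows generate the cone $\Row(S)\cap\RR_{+}^{N}$ (Corollary~3 of \cite{GGKPRT13}), forces $P=Q\oplus_{\p}R$. Without this step there is no bridge from the geometric hypothesis to anything usable on the ideal side, and your expectation that ``no spurious low-rank specialization appears'' remains a hope rather than an argument. I would also note that there is no single ``obvious binomial relating the two new columns'': the obstruction lives in whole families of $4$- and $8$-cycles indexed by pairs of facets and pairs of vertices.
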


This is a direct analogue of Theorem \ref{McOperations}.  Note that while the first two parts of Theorem \ref{thm:main} show that some operations unconditionally preserve graphicality, the same is not true for the last part. The condition of not being a vertex sum of two polytopes is not very natural in an algebraic setting. We therefore derive a necessary and sufficient algebraic condition (fully described later in the section) for a vertex split to be projectively unique. This condition can be easier to check than McMullen's original geometric condition.

 Before setting out to prove this theorem, we will first show that duality preserves graphicality.
\begin{proposition}\label{d}
Let $P$ and $P^*$ be dual polytopes. Then  $P$ is graphic if and only if $P^*$ is graphic.
\end{proposition}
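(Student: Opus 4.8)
The key observation is that the non-incidence graph of a polytope is a self-dual construction: if $P$ and $P^*$ are dual, then $\mathbf{G}_{P^*}$ is obtained from $\mathbf{G}_P$ simply by swapping the two sides of the bipartition. Indeed, the order-reversing bijection of face lattices sends facets of $P$ to vertices of $P^*$ and vertices of $P$ to facets of $P^*$, and a vertex $\vv$ of $P$ fails to lie on a facet $F$ of $P$ exactly when the dual face $F^*$ (a vertex of $P^*$) fails to lie on the dual facet $\vv^*$ of $P^*$. Hence there is a graph isomorphism $\mathbf{G}_P \cong \mathbf{G}_{P^*}$ that merely exchanges the roles of $\ver$ and $\facets$, and which therefore carries chordless cycles to chordless cycles. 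The plan is to make this precise and then transport the toric ideal $T_P$ and the slack ideal $I_P$ along this correspondence.

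First I would fix a labeling and note that, up to relabeling variables, the symbolic slack matrix $S_{P^*}(\x)$ is the transpose of $S_P(\x)$: this is essentially standard (a slack matrix of $P^*$ is a transpose of a slack matrix of $P$), and transposition does not change the set of maximal minors or the saturation, so $I_{P^*} = I_P$ after this identification of variables. Next, since $\mathbf{G}_{P^*}$ is $\mathbf{G}_P$ with the bipartition classes swapped, a cycle $C$ in $\mathbf{G}_P$ corresponds to the same cycle in $\mathbf{G}_{P^*}$ but with its orientation-induced partition of edges into ``facet-to-vertex'' and ``vertex-to-facet'' reversed; thus the associated binomial $f_C = x_1\cdots x_n - y_1\cdots y_m$ simply changes sign, so it generates the same ideal. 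Being chordless is a purely graph-theoretic property preserved by the isomorphism, so the generating sets of $T_P$ and $T_{P^*}$ coincide, giving $T_{P^*} = T_P$ under the variable identification.

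Combining these, $I_P = I_{P^*}$ and $T_P = T_{P^*}$ as ideals in the same polynomial ring, so $I_P = T_P$ if and only if $I_{P^*} = T_{P^*}$; that is, $P$ is graphic if and only if $P^*$ is graphic.

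\textbf{Main obstacle.} The only genuinely delicate point is verifying cleanly that $S_{P^*}$ can be taken to be a transpose of $S_P$ with a matching bijection of variables — in particular that a nonzero entry of $S_P$ (a strictly positive slack) corresponds to a nonzero entry of $S_{P^*}$, which is exactly the non-incidence statement $\vv \notin F \iff F^* \notin \vv^*$. This requires care about which representative slack matrices one chooses (the polar $P^\circ$ gives a convenient concrete model when the origin is interior to $P$), but it is not conceptually hard; once the variable bijection is pinned down, the rest is formal, since both defining ideals are manifestly invariant under transposition of the underlying matrix.
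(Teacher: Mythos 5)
Your proposal is correct and follows essentially the same route as the paper: identify $S_{P^*}$ with the transpose of $S_P$ to get $I_P = I_{P^*}$, note that the non-incidence graphs coincide (with the bipartition sides swapped) to get $T_P = T_{P^*}$, and conclude. The extra care you take with the variable bijection and the sign of the cycle binomials is a reasonable elaboration of details the paper leaves implicit.
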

\begin{proof}
Since the symbolic slack matrices of $P$ and $P^*$ are transposes of each other, we have $I_P=I_{P^*}$. Also, the non-incidence graphs of $P$ and $P^*$ are the same and thus $T_P=T_{P^*}$. From these equalities, the result follows.
\end{proof}
Proposition \ref{d} allows us to translate results from vertex splitting to facet wedging, which will be convenient later on in the context of order polytopes.
\subsection{Auxiliary results}
To show that the McMullen operations also preserve graphicality, we will analyze their effect on the slack ideals $I_P$ and the toric ideals $T_P$. To do that we will use two technical auxiliary results that we present in this subsection.

The first of these results is simply a restatement of the usual argument used to show that slack matrices of $d$-dimensional polytopes have rank $d+1$
by showing that certain submatrices associated to flags of faces are triangular.

  \begin{lemma}[Flag Lemma \cite{GMTW18-1}] \label{lem:flag} Let $P$ be a $d$-polytope and
    \[ \emptyset = g_{-1} \subset g_0 \subset \dots \subset g_{d-1} \subset g_d = P \]
    be a complete flag of faces of $P$. Let $G_0, \dots, G_d$ be facets of $P$ such that
  $g_k = G_d \cap G_{d-1} \cap \dots G_{k+1}$ for $k = -1,0,\dots,d-1$ and $\w_0, \dots, \w_d$ be vertices of $P$ such that $\w_k \in g_k \setminus g_{k-1}$ for $k=0,\dots,d$. Then the $(d+1) \times (d+1)$ submatrix $A(\x)$ formed from the rows of $S_P(\x)$ indexed by $G_0,\dots,G_d$ and the columns indexed by $\w_0,\dots,\w_d$ is upper triangular with variables on the diagonal. In particular, the $(d+1)$-minor given by its determinant is a nonzero monomial.
\end{lemma}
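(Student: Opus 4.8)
The plan is to check directly, from the definition of the symbolic slack matrix and the combinatorial data in the hypothesis, that $A(\x)$ is upper triangular with a distinct variable in each diagonal slot. Once that is done the conclusion about the $(d+1)$-minor is automatic, since the determinant of an upper triangular matrix is the product of its diagonal entries. The only fact about $S_P(\x)$ that will be used is that its $(G,\w)$ entry is $0$ exactly when $\w\in G$ and a variable otherwise, so every claim reduces to deciding an incidence ``vertex on facet or not.''

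Index the rows of $A(\x)$ by $G_0,G_1,\dots,G_d$ and the columns by $\w_0,\w_1,\dots,\w_d$, and write $A_{ij}$ for the entry in row $G_i$, column $\w_j$. For the strictly-lower-triangular entries I would argue: if $i>j$, then $i\ge j+1$, so $G_i$ is one of the facets in the intersection $g_j=G_d\cap G_{d-1}\cap\cdots\cap G_{j+1}$; since $\w_j\in g_j$ we get $\w_j\in G_i$, hence $A_{ij}=0$. This already gives that $A(\x)$ is upper triangular.

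For the diagonal, fix $k$ and suppose for contradiction that $\w_k\in G_k$. We also have $\w_k\in g_k=G_d\cap\cdots\cap G_{k+1}$, and intersecting with $G_k$ gives $\w_k\in G_d\cap\cdots\cap G_k=g_{k-1}$ by the hypothesis applied at index $k-1$. This contradicts the choice $\w_k\in g_k\setminus g_{k-1}$, so in fact $\w_k\notin G_k$ and $A_{kk}$ is a variable. The same incidence check shows the chosen facets are pairwise distinct --- if $G_k=G_\ell$ with $k<\ell$ then $G_\ell\supseteq g_k$ forces $g_k\subseteq G_k$, contradicting $\w_k\in g_k\setminus G_k$ --- and the vertices are pairwise distinct because the sets $g_k\setminus g_{k-1}$ are pairwise disjoint; thus $A(\x)$ is genuinely a $(d+1)\times(d+1)$ submatrix of $S_P(\x)$. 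Combining, $\det A(\x)=\prod_{k=0}^{d}A_{kk}$ is a product of $d+1$ distinct variables (distinct entries of a symbolic slack matrix carry distinct variables), a nonzero monomial as claimed.

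There is no substantive obstacle here: the whole argument is bookkeeping. The one place to stay alert is the indexing convention $g_k=G_d\cap\cdots\cap G_{k+1}$ --- in particular that it is $g_{k-1}$, not $g_k$, whose defining intersection contains $G_k$ --- and, if one wants the lemma to be non-vacuous, noting that a complete flag with such accompanying facets and vertices always exists (take any complete flag of faces, write each proper face as the intersection of the facets through it, and pick a vertex in each successive face that does not lie in the previous one).
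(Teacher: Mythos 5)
Your proof is correct and follows essentially the same route as the paper's: a direct incidence check showing that $\w_j\in G_i$ for $i>j$ (zero entries below the diagonal) and $\w_k\notin G_k$ because $g_{k-1}=g_k\cap G_k$ (variables on the diagonal). The extra verifications of distinctness of the facets and vertices are harmless additions the paper leaves implicit.
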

\begin{proof}
  For each $j = 0,\dots,d$, we have $\w_j \in g_j = G_d \cap \dots \cap G_{j+1}$, so $a_{ij} = 0$ for each $i > j$. But $\w_j \notin g_{j-1} = G_d \cap \dots \cap G_{j+1} \cap G_j$, so $\w_j \notin G_j$ and so $a_{jj}$ is a variable for each $j$. That is, $A(\x)$ is upper triangular with variables on the diagonal.
\end{proof}

The second basic result we will be repeatedly using relates the cycle space of the non-incidence graph of a polytope with its slack ideal.

Here, if $C$ is a collection of oriented edges of $\G_P$, then $\overline{C}$ denotes the same collection of edges but with opposite orientations. Also, if $C_1$ and $C_2$ are collections of oriented edges, then we can find collections $C'_1$, $C'_2$ and $C_0$ such that $C_1=C'_1\cup C_0$, $C_2=C'_2\cup\overline{C_0}$, and there is no edge that is in both $C'_1$ and $C'_2$ but with different orientation. Using this, we define $C_1+C_2:=C'_1\cup C'_2$.

\begin{lemma}[Cycle-Splitting Lemma]\label{lsc}
  Let $C_1$ and $C_2$ be collections of oriented edges of $\mathbf{G}_P$.
  If $f_{C_1}$ and $f_{C_2}$ both belong to $I_P$, then $f_{C_1+C_2} \in I_P$.
\end{lemma}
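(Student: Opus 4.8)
The plan is to reduce everything to a single algebraic identity among monomials coming from the binomials $f_{C_1}$, $f_{C_2}$, and $f_{C_1+C_2}$, and then exploit the saturation in the definition of $I_P$ to clear the extra factors. Write $f_{C_1}=x_1\cdots x_n-y_1\cdots y_m$ and $f_{C_2}=u_1\cdots u_p-w_1\cdots w_q$, where the $x$'s and $u$'s are the variables on edges oriented from facets to vertices and the $y$'s and $w$'s those oriented from vertices to facets. Let $C_0$ be the edges common to $C_1$ and $\overline{C_2}$ as in the definition of $C_1+C_2$; each edge of $C_0$ appears, say, in the ``facet-to-vertex'' role in $C_1$ and hence in the ``vertex-to-facet'' role in $C_2$. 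Let $\mm_0$ be the product of the labels of the edges in $C_0$. Then a direct bookkeeping of the four groups of edges shows that
\[
(x_1\cdots x_n)(u_1\cdots u_p)\;-\;(y_1\cdots y_m)(w_1\cdots w_q)
\;=\; \mm_0 \cdot f_{C_1+C_2} \;+\; (\text{cross terms}),
\]
and more usefully that $f_{C_1+C_2}$ times the monomial $\mm_0^2$ (or $\mm_0$, depending on how the roles line up) is an explicit $\ZZ[\x]$-combination of $f_{C_1}$ and $f_{C_2}$. Concretely, I expect an identity of the shape
\[
\mm\cdot f_{C_1+C_2} \;=\; (u_1\cdots u_p)\,f_{C_1} \;+\; (y_1\cdots y_m)\,f_{C_2}
\]
for a suitable monomial $\mm$ in the labels of $C_0$, obtained by substituting one binomial into the other. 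Since $f_{C_1},f_{C_2}\in I_P$ and $I_P$ is an ideal, the right-hand side lies in $I_P$, hence $\mm\cdot f_{C_1+C_2}\in I_P$.

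To finish, I would invoke the saturation built into $I_P=\Minors_{d+2}(S_P(\x)):\left(\prod_i x_i\right)^\infty$. By the Observation following the definition of the slack ideal, if $\mm f\in I_P$ for a monomial $\mm$, then already $f\in I_P$; applying this with $f=f_{C_1+C_2}$ gives $f_{C_1+C_2}\in I_P$, as desired. One small point to be careful about: if $C_1+C_2$ is empty (all edges cancel), then $f_{C_1+C_2}=0\in I_P$ trivially, and if some variable appears in $\mm$ with the ``wrong'' multiplicity one should double-check that $\mm$ is genuinely a monomial in the edge-labels and not, e.g., accidentally $0$ — but since all labels are distinct variables this cannot happen.

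The main obstacle is the combinatorial bookkeeping in the first step: correctly tracking, for each of the four edge classes (facet$\to$vertex or vertex$\to$facet, in $C_1$ or in $C_2$), how the cancellation of the edges of $C_0$ redistributes labels between the two monomials of $f_{C_1+C_2}$, so that the claimed polynomial identity holds on the nose. This is where sign conventions and the ``no edge in both $C_1'$ and $C_2'$ with opposite orientation'' normalization from the definition of $C_1+C_2$ must be used precisely; once that identity is written down correctly, the rest is immediate from the ideal property and the saturation observation. I would handle it by splitting the edge set of $C_1\cup C_2$ according to membership and orientation and writing each of the six resulting monomials explicitly, then verifying the identity termwise.
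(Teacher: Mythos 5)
Your proposed identity $\mm\cdot f_{C_1+C_2} = (u_1\cdots u_p)f_{C_1} + (y_1\cdots y_m)f_{C_2}$ does hold on the nose with $\mm$ equal to the product of the labels of all edges of $C_0$, and together with the saturation observation this is exactly the paper's argument; the paper uses the slightly smaller coefficients $\mm_2$ and $\nn_1$ (your coefficients divided by the common factor contributed by $C_0$), so its multiplier is only the positive part of $f_{C_0}$. The bookkeeping you defer is handled there by writing $f_{C'_1}=\mm_1-\nn_1$, $f_{C'_2}=\mm_2-\nn_2$, $f_{C_0}=\mm_3-\nn_3$, so that $f_{C_1}=\mm_1\mm_3-\nn_1\nn_3$ and $f_{C_2}=\mm_2\nn_3-\nn_2\mm_3$ and the cross terms cancel immediately.
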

\begin{proof}
Using the notation above, let $f_{C'_1}:=\mm_1 - \nn_1$, $f_{C'_2}:=\mm_2 - \nn_2$, and $f_{C_0}=\mm_3 - \nn_3$. Thus $f_{C_1}=\mm_1\mm_3 - \nn_1\nn_3$ and $f_{C_2}:=\mm_2\nn_3 - \nn_2\mm_3$. Since $f_{C_1}$ and $f_{C_2}$ are in $I_P$, so is $\mm_2 f_{C_1} + \nn_1 f_{C_2} = \mm_1 \mm_2 \mm_3 - \nn_1 \nn_2 \mm_3 = \mm_3 f_{C_1+C_2}$. Since $I_P$ is saturated, we conclude that $f_{C_1+C_2} \in I_P$.
%
%
%
%
\end{proof}
We call this result the Cycle-Splitting Lemma because we are going to apply it in the case that $C_1$, $C_2$ and $C_1+C_2$ are cycles.

\subsection{The join operation}
The first and simplest of the operations introduced by McMullen is that of the join. An important special case of this construction is the pyramid over a polytope $P$, which is the join of $P$ with a point.
The slack ideal of the pyramid is exactly the same as the slack ideal of the original polytope, so all algebraic properties, including graphicality, are preserved.

Even in the general case, the join operation is very easy to interpret in terms of slack matrices. In fact, from the description of its facets and vertices in Section \ref{sec:mcmullen} we see that the symbolic slack matrix of $P\vee Q$ is given by
\[S_{P\vee Q}(\x,\y)=
\begin{bmatrix}
S_P(\x) & O\\
O & S_Q(\y)
\end{bmatrix}\]
where $S_P(\x)$ and $S_Q(\y)$ are the symbolic slack matrices of $P$ and $Q$, respectively. This makes the slack ideal of the join easy to describe in terms of the original slack ideals.

\begin{lemma}\label{lem:joinslack}
Let $P\subseteq\mathbb{R}^d$ and $Q\subseteq\mathbb{R}^e$ be two full-dimensional polytopes and $I_P \subseteq \RR[\x]$ and $I_Q \subseteq \RR[\y]$ their slack ideals. Then $$I_{P\vee Q}=\langle I_P \rangle+\langle I_Q\rangle.$$
Moreover $I_{P\vee Q} \cap \RR[\x]=I_P$ and $I_{P\vee Q} \cap \RR[\y]=I_Q$.
\end{lemma}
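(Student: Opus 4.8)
The plan is to exploit the block-diagonal shape of $S_{P\vee Q}(\x,\y)$ directly. Write $S := S_{P\vee Q}(\x,\y)$, with upper-left block $S_P(\x)$ of size $m_P \times n_P$ (where $d+1 = \rank S_P$) and lower-right block $S_Q(\y)$ of size $m_Q \times n_Q$. First I would recall that $P\vee Q$ has dimension $d+e+1$, so the slack ideal is obtained by saturating $\Minors_{d+e+3}(S)$ by the product of all variables. The key linear-algebra fact is that a square submatrix of a block-diagonal matrix has nonzero determinant only if it is (after row/column permutation) block-diagonal itself, built from a square submatrix of $S_P(\x)$ and a square submatrix of $S_Q(\y)$ whose sizes add up; and in that case its determinant is the product of the two sub-determinants. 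Hence every nonzero $(d+e+3)$-minor of $S$ factors as (a minor of $S_P(\x)$ of some size $a$) times (a minor of $S_Q(\y)$ of size $b$) with $a+b = d+e+3$. Since $\rank S_P(\x) = d+1$ forces any $(d+2)$-or-larger minor of $S_P(\x)$ to vanish identically, and likewise for $S_Q(\y)$ with $e+2$, the only way $a+b = d+e+3$ with $a \le d+1$ and $b \le e+1$ fails — so in fact one of $a \ge d+2$ or $b \ge e+2$ always holds, meaning each such product is a monomial multiple either of a $(d+2)$-minor of $S_P(\x)$ or of an $(e+2)$-minor of $S_Q(\y)$.

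From this the inclusion $\Minors_{d+e+3}(S) \subseteq \langle \Minors_{d+2}(S_P(\x))\rangle + \langle \Minors_{e+2}(S_Q(\y))\rangle$ is immediate, and conversely any generator of $\langle I_P\rangle$ (resp. $\langle I_Q\rangle$) can be multiplied by a suitable Flag-Lemma monomial minor from the other block — using Lemma \ref{lem:flag} applied to a complete flag of $Q$ (resp. $P$) — to produce a $(d+e+3)$-minor of $S$. Saturating, the monomial cofactor is inverted, so $\langle I_P\rangle + \langle I_Q\rangle \subseteq I_{P\vee Q}$ and the reverse inclusion holds after saturation as well because saturation commutes with the sum here (the saturating monomial splits as an $\x$-part times a $\y$-part, and each piece only needs to be cleared on its own side). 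This gives $I_{P\vee Q} = \langle I_P\rangle + \langle I_Q\rangle$.

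For the "moreover" statement, I would argue $I_{P\vee Q}\cap\RR[\x] = I_P$ as follows. The containment $\supseteq$ is clear. For $\subseteq$, take $f \in (\langle I_P\rangle + \langle I_Q\rangle)\cap\RR[\x]$ and substitute into the $\y$-variables a point $\boldsymbol\beta \in \RR_{++}^{k_Q}$ at which $S_Q(\boldsymbol\beta)$ is a genuine (generalized) slack matrix of $Q$ — such a point exists since $Q$ is a polytope. Under this specialization every element of $\langle I_Q\rangle$ maps into $I_Q$ evaluated at $\boldsymbol\beta$, which is the zero ideal because $S_Q(\boldsymbol\beta)$ has the minimal rank $e+1$; hence $f$ lands in $\langle I_P\rangle$ specialized, i.e. in $I_P$ (the substitution does nothing to $f$ since $f\in\RR[\x]$). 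Symmetrically for $\RR[\y]$.

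The step I expect to be the main obstacle is the careful bookkeeping in the first paragraph: showing cleanly that saturation by $\bigl(\prod x_i \prod y_j\bigr)^\infty$ of the sum equals the sum of the separately-saturated ideals. The subtlety is that a priori saturation need not distribute over sums of ideals; here it works because of the product structure (the two variable sets are disjoint and each block's minors involve only its own variables), so I would isolate this as a short lemma of the form "if $I = I_1 + I_2$ with $I_j \subseteq \RR[\x_j]$ in disjoint variable sets, then $I : (\prod \x_1 \prod \x_2)^\infty = (I_1 : (\prod\x_1)^\infty) + (I_2 : (\prod\x_2)^\infty)$," proved by the specialization trick above, and then everything else is routine.
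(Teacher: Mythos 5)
Your overall route coincides with the paper's: factor each nonzero $(d+e+3)$-minor of the block-diagonal matrix $S_{P\vee Q}(\x,\y)$ as a product of a minor of $S_P(\x)$ and a minor of $S_Q(\y)$, use pigeonhole to force one factor to have size at least $d+2$ (resp.\ $e+2$) and hence to lie in the corresponding slack ideal via Laplace expansion; use the Flag Lemma on the opposite block together with saturation for the reverse inclusion; and prove the ``moreover'' statement by specializing the $\y$-variables at a true slack matrix of $Q$. One small imprecision: the $(d+2)$-or-larger minors of the \emph{symbolic} matrix $S_P(\x)$ do not vanish identically (the symbolic matrix has generic rank larger than $d+1$); what you actually need, and what Laplace expansion gives, is that they lie in the ideal generated by the $(d+2)$-minors. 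This does not affect the argument, since the pigeonhole step only uses $a+b=d+e+3>(d+1)+(e+1)$.

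The genuine gap sits exactly where you predicted: the claim that $\langle I_P\rangle+\langle I_Q\rangle$ is already saturated with respect to $\prod x_i\prod y_j$. Your proposed sub-lemma is true, but the specialization trick cannot prove it. For a general $f(\x,\y)$ with $\mm f\in\langle I_P\rangle+\langle I_Q\rangle$, evaluating $\y$ at positive points $\boldsymbol{\beta}$ of the slack variety of $Q$ only shows that $f(\x,\boldsymbol{\beta})\in I_P$ for each such point; passing from this pointwise statement back to $f\in\langle I_P\rangle+\langle I_Q\rangle$ would require $I_Q$ to be (real) radical, which is not known for slack ideals, so at best you would land in a radical. The paper closes this step by citing Lemma 2.6 of \cite{SW19}, which shows that a sum of saturated ideals in disjoint sets of variables is saturated; the underlying reason is that $\RR[\x,\y]/(\langle I_P\rangle+\langle I_Q\rangle)\cong(\RR[\x]/I_P)\otimes_{\RR}(\RR[\y]/I_Q)$, and multiplication by $\prod y_j$ acts on this tensor product as $\mathrm{id}\otimes(\cdot\prod y_j)$, which is injective because $\RR[\x]/I_P$ is free as an $\RR$-module (and symmetrically for $\prod x_i$). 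With that replacement, the rest of your argument, including the ``moreover'' part, matches the paper's proof.
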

\begin{proof}
  We first note that by \cite[Lemma 2.6]{SW19}, $\langle I_P \rangle+\langle I_Q\rangle$ is saturated because $I_P$ and $I_Q$ are saturated ideals in two polynomial rings on disjoint sets of variables. Thus, to prove $I_{P\vee Q}\subseteq \langle I_P \rangle+\langle I_Q\rangle$ it is enough to show that any $(d+e+3)$-minor of $S_{P \vee Q}$ is in $\langle I_P \rangle + \langle I_Q \rangle $. Let $m$ be a nonzero $(d+e+3)$-minor of $S_{P \vee Q}(\x,\y)$. By the block structure of $S_{P \vee Q}$, $m(\x,\y) = m'(\x)m''(\y)$ where $m'$ is an $r$-minor of $S_P(\x)$ and $m''$ is an $s$-minor of $S_P(\y)$ for some $r$ and $s$ that sum to $d+e+3$. By the pigeonhole principle, either $r \geq d+2$ or $s \geq e+2$. Without loss of generality, assume the former.  It is easy to see that $\Minors_{r}\left(S_P(\x)\right) \subseteq \Minors_{d+2}\left(S_P(\x)\right)$ by way of Laplace expansion, so $m'(\x) \in I_P$ and hence $m(\x,\y) \in \langle I_P \rangle$, concluding the proof of the forward inclusion.



To prove the reverse inclusion it is enough to show that any $(d+2)$-minor of $S_P(\x)$ is in $I_{P\vee Q}$. The symmetry between $P$ and $Q$ and the fact that $I_{P\vee Q}$ is saturated then will imply it. Let $m(\x)$ be the $(d+2)$-minor of $S_P(\x)$ obtained from an arbitrary $(d+2) \times (d+2)$ submatrix $T$. By the Flag Lemma, we know there is some $(e+1)\times(e+1)$ triangular submatrix $U$ of $S_Q(\y)$ with variables on the diagonal. Form a $(d+e+3)\times (d+e+3)$ submatrix of $S_{P \vee Q}(\x,\y)$ containing both $T$ and $U$. The block structure will imply that the associated $(d+e+3)$-minor is simply $m(\x)n(\y)$ where $n(\y)$ is a monomial in the variables $\y$. Thus $m(\x)n(\y) \in I_{P \vee Q}$, and by saturation this implies $m(\x)$ belongs to $I_{P\vee Q}$.

We are left to prove that $I_{P\vee Q} \cap \RR[\x]=I_P$ since the analogous result for $Q$ follows from the symmetry of the construction. We prove the forward inclusion since the other is clear. Let $f(\x) \in I_{P \vee Q} = \langle I_P \rangle + \langle I_Q \rangle$. Then there are polynomials $f_i \in I_P, g_j \in I_Q$ and $p_i, q_j \in \CC[\x,\y]$ such that
\[f(\x) = \sum_{i} p_i(\x, \y)f_i(\x) + \sum_{j} q_j(\x, \y)g_j(\y).\]
Evaluating the expression at some
$\tilde{\y}$ such that $S_Q(\tilde{\y})$ is a true slack matrix of $Q$, we have that $g_j(\tilde{\y})=0$ hence
\[ f(\x) = \sum_{i} p_i(\x, \tilde{\y}) f_i(\x) \in I_P,\]
proving the claim. 
\end{proof}

The join is even easier to understand when applied to $T_{P}$ and $T_Q$.
Since the non-incidence graph of $P\vee Q$ is just the disjoint union of the non-incidence graphs of $P$ and $Q$, we have by definition that
$$T_{P \vee Q} = \langle T_P \rangle + \langle T_Q \rangle.$$
By the same argument used above but with $\tilde{\y}$ being the all-ones vector, we obtain that $T_{P \vee Q} \cap \RR[\x]=T_P$ and
$T_{P \vee Q} \cap \RR[\y]=T_Q$. From these properties it is easy to show that the join preserves graphicality.

\begin{theorem}\label{tj}
Let $P\subseteq\mathbb{R}^d$ and $Q\subseteq\mathbb{R}^e$ be two full-dimensional polytopes. Then
\begin{enumerate}[a)]
\item $I_{P\vee Q}\subseteq T_{P\vee Q}$ if and only if $I_P\subseteq T_P$ and $I_Q\subseteq T_Q$.
\item $T_{P\vee Q}\subseteq I_{P\vee Q}$ if and only if $T_P\subseteq I_P$ and $T_Q\subseteq I_Q$.
\item In particular, $P\vee Q$ is graphic if and only if $P$ and $Q$ are graphic.
\end{enumerate}
\end{theorem}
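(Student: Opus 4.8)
The plan is to reduce everything to the structural facts about $I_{P\vee Q}$ and $T_{P\vee Q}$ that have already been set up, namely Lemma \ref{lem:joinslack} together with its announced analogue for the toric ideals: $T_{P\vee Q} = \langle T_P\rangle + \langle T_Q\rangle$, with $T_{P\vee Q}\cap\RR[\x] = T_P$ and $T_{P\vee Q}\cap\RR[\y] = T_Q$. Once these are in hand, parts (a) and (b) are purely formal, and (c) follows by combining them. The toric analogue itself needs only the observation that the non-incidence graph of $P\vee Q$ is the disjoint union of $\G_P$ and $\G_Q$, so every chordless cycle lies entirely in one of the two pieces (giving the direct-sum decomposition), and then the same evaluation trick as in the proof of Lemma \ref{lem:joinslack} — specializing the $\y$-variables at $\mathbf{1}$, where $T_Q$ vanishes — yields the contraction identity $T_{P\vee Q}\cap\RR[\x]=T_P$, and symmetrically for $\y$.

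For the ``if'' direction of (a), I would note that $I_P \subseteq T_P$ implies $\langle I_P\rangle \subseteq \langle T_P\rangle$ as ideals of $\RR[\x,\y]$ (extension preserves inclusions), and likewise $\langle I_Q\rangle\subseteq\langle T_Q\rangle$; adding these and using the decomposition formulas gives $I_{P\vee Q} = \langle I_P\rangle + \langle I_Q\rangle \subseteq \langle T_P\rangle + \langle T_Q\rangle = T_{P\vee Q}$. For the ``only if'' direction I would contract to the smaller polynomial rings: $I_P = I_{P\vee Q}\cap\RR[\x] \subseteq T_{P\vee Q}\cap\RR[\x] = T_P$, and symmetrically $I_Q\subseteq T_Q$. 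Part (b) is literally the same argument with the roles of $I$ and $T$ interchanged, which is legitimate precisely because both families of ideals satisfy identical direct-sum and contraction formulas.

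For (c), I would chain the equivalences: $P\vee Q$ is graphic iff $I_{P\vee Q} = T_{P\vee Q}$, i.e. iff both $I_{P\vee Q}\subseteq T_{P\vee Q}$ and $T_{P\vee Q}\subseteq I_{P\vee Q}$ hold; by (a) and (b) this is equivalent to the conjunction $I_P\subseteq T_P$, $I_Q\subseteq T_Q$, $T_P\subseteq I_P$, $T_Q\subseteq I_Q$, i.e. to $I_P = T_P$ and $I_Q = T_Q$, i.e. to $P$ and $Q$ both being graphic.

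I do not expect a genuine obstacle in this theorem; the real content has already been absorbed into Lemma \ref{lem:joinslack}. The one point that deserves care is that the ``only if'' directions genuinely require the contraction identities $I_{P\vee Q}\cap\RR[\x]=I_P$ (and the $T$-analogue), not merely the decomposition $I_{P\vee Q}=\langle I_P\rangle+\langle I_Q\rangle$: an inclusion between sums of extended ideals does not by itself restrict to an inclusion between the summands, so the step $I_{P\vee Q}\subseteq T_{P\vee Q}\ \Rightarrow\ I_P\subseteq T_P$ must go through the intersection with $\RR[\x]$ rather than by cancelling summands.
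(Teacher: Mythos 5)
Your proposal is correct and follows essentially the same route as the paper: the paper's own proof consists of exactly the reduction you describe, citing Lemma \ref{lem:joinslack} together with the decomposition $T_{P\vee Q}=\langle T_P\rangle+\langle T_Q\rangle$ and the contraction identities obtained by evaluating at $\tilde{\y}=\mathbf{1}$, and then intersecting with $\RR[\x]$ and $\RR[\y]$. Your closing remark that the ``only if'' directions must go through the contraction identities rather than by cancelling summands is exactly the point the paper's terse proof is implicitly relying on.
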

\begin{proof}
  The first two statements follow from Lemma \ref{lem:joinslack} and the properties of $T_{P \vee Q}$ seen above by intersecting with $\RR[\x]$ and $\RR[\y]$. The third statement follows immediately from the first two.
\end{proof}


\begin{observation}
Note that by Theorem \ref{m2l} (1), a) is equivalent to saying that $P\vee Q$ is morally 2-level if and only if $P$ and $Q$ are morally 2-level. Also, from the block structure of $S_{P\vee Q}(\x,\y)$, it is clear that $P\vee Q$ is projectively unique if and only if $P$ and $Q$ are projectively unique (see Theorem \ref{tpu}.)
\end{observation}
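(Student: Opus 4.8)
The plan is to prove both assertions of this observation by directly invoking results already established, treating each as an essentially immediate consequence rather than something requiring new argument. The observation makes two claims: first, that part a) of Theorem \ref{tj} is equivalent to a statement about moral 2-levelness, and second, that projective uniqueness of $P \vee Q$ is equivalent to projective uniqueness of both factors.

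For the first claim, I would simply apply Theorem \ref{m2l} (1), which states that a polytope $R$ is morally 2-level if and only if $I_R \subseteq T_R$. Applying this equivalence to $P \vee Q$, to $P$, and to $Q$ separately, the condition ``$I_{P\vee Q} \subseteq T_{P \vee Q}$ if and only if $I_P \subseteq T_P$ and $I_Q \subseteq T_Q$'' from part a) translates verbatim into ``$P \vee Q$ is morally 2-level if and only if $P$ and $Q$ are morally 2-level.'' There is nothing to prove beyond substituting the three instances of the characterization; the work was already done in Theorem \ref{tj}a).

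For the second claim, I would argue via the block-diagonal structure $S_{P\vee Q}(\x,\y) = \begin{bmatrix} S_P(\x) & O \\ O & S_Q(\y) \end{bmatrix}$ together with Theorem \ref{tpu}, which says a polytope is projectively unique precisely when it has a unique generalized slack matrix up to positive row and column scaling. The key point is that any generalized slack matrix of $P \vee Q$ must respect this block structure (the support pattern is fixed by the combinatorics), so specifying one up to diagonal scaling is exactly the same data as specifying a generalized slack matrix of $P$ and one of $Q$, each up to its own row and column scaling. Thus uniqueness for $P \vee Q$ holds if and only if it holds for each factor. I would phrase this as: a realization of $P \vee Q$ decomposes into independent realizations of the two blocks, and a block-diagonal matrix is determined up to the stated scalings exactly when each block is.

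The main (and only) subtlety I anticipate is making the block-structure argument for projective uniqueness fully rigorous: one must confirm that positive row and column scalings of the full matrix act independently on the two diagonal blocks and cannot mix them, which follows because the zero blocks force any valid scaling to preserve the block partition of rows and columns. This is genuinely routine given the explicit form of $S_{P \vee Q}$, so I would keep the treatment brief and lean on Theorem \ref{tpu} rather than reconstructing the realization space directly.
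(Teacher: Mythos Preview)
Your proposal is correct and matches the paper's intent exactly: the observation is stated without proof in the paper, relying precisely on Theorem~\ref{m2l}(1) for the first claim and on the block-diagonal form of $S_{P\vee Q}(\x,\y)$ together with Theorem~\ref{tpu} for the second. Your unpacking of the second claim (that positive diagonal scalings act independently on the two blocks, so generalized slack matrices of $P\vee Q$ up to scaling correspond bijectively to pairs of generalized slack matrices of $P$ and $Q$ up to scaling) is the natural way to make the word ``clear'' explicit.
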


\subsection{The vertex sum operation}

We now turn to a more involved operation: the vertex sum. Before
proving that this operation preserves graphicality, we will explain its effect on slack matrices. For simplicity we work with the \emph{support} of each symbolic slack matrix, which is the matrix obtained by replacing each variable by a 1.

Let $P$ be a $d$-dimensional polytope and $Q$ an $e$-dimensional polytope, embedded in such a way that their affine spans intersect only in a single common vertex $\p$ as in Definition \ref{j} (2). Denote by $\mathcal{F}$ and $\mathcal{G}$ the sets of facets of $P$ and $Q$, respectively, that contain $\p$, while $\overline{\mathcal{F}}$ and $\overline{\mathcal{G}}$ are the sets of facets that do not contain $\p$. Suppose that $\overline{\mathcal{F}}$ has $r$ elements while $\overline{\mathcal{G}}$ has $s$. Furthermore, let $V:=\ver(P)\setminus\lbrace \p\rbrace$ and $W:=\ver(Q)\setminus\lbrace \p\rbrace$. With this notation, the supports of the slack matrices of $P$ and $Q$ are as follows:
\[
S_P(\mathbf{1})=\kbordermatrix{
& V & \p\\
\mathcal{F} & A & \mathbf{0}\\
\overline{\mathcal{F}} & \overline{A} & \mathbf{1}_r
}\ \text{ and }\ S_Q(\mathbf{1})=\kbordermatrix{
& W & \p\\
\mathcal{G} & B & \mathbf{0}\\
\overline{\mathcal{G}} & \overline{B} & \mathbf{1}_s
}.
\]
From the description of the vertices and facets of a vertex sum in Section \ref{sec:mcmullen}, we can conclude that the support of the slack matrix of $P\oplus_{\p}Q$ has the form
\[
S_{P\oplus_{\p}Q}(\mathbf{1})=\kbordermatrix{
& V & \p & W\\
\mathcal{F}\oplus_{\p}Q & A & \mathbf{0} & O\\
P\oplus_{\p}\mathcal{G} & O & \mathbf{0} & B\\
\overline{\mathcal{F}} \vee \overline{\mathcal{G}} & \overline{A} \otimes \mathbf{1}_s & \mathbf{1}_{r+s} & \mathbf{1}_r \otimes \overline{B}
},\]
where $\mathcal{F}\oplus_{\p}Q:=\lbrace F\oplus_{\p} Q:F\in\mathcal{F}\rbrace$, $P\oplus_{\p}\mathcal{G}:=\lbrace P\oplus_{\p} G:G\in\mathcal{G}\rbrace$ and  $\overline{\mathcal{F}}\vee\overline{\mathcal{G}}:=\lbrace F\vee G:F\in\overline{\mathcal{F}},G\in\overline{\mathcal{G}}\rbrace$. Note that the rows indexed by the facets in $\overline{\mathcal{F}}\vee\overline{\mathcal{G}}$ are all possible concatenations of rows of $\overline{A}$ and rows of $\overline{B}$ with a 1 in the middle.

With this observation, it is easy to show that if $P$ and $Q$ are morally $2$-level then so is $P\oplus_{\p}Q$. Recall that $P$ and $Q$ being morally $2$-level just means that $S_P(\mathbf{1})$ and $S_P(\mathbf{1})$ are of rank $d+1$ and $e+1$, respectively.  Looking at  the submatrix of $S_{P\oplus_{\p}Q}(\mathbf{1})$ whose columns are indexed by $V$ and $\p$, we see that the rows there are precisely those of $S_P(\mathbf{1})$ with some repetitions, so it has rank $d+1$, while the submatrix of the columns indexed by $W$ and $\p$ similarly has rank $e+1$. Since the two submatrices share a column, the total rank of $S_{P\oplus_{\p}Q}(\mathbf{1})$ is at most $(d+1) + (e+1) -1 = d+e+1$. Since ${P\oplus_{\p}Q}$ is $(d+e)$-dimensional, the rank must in fact be exactly $d+e+1$, so the vertex sum is indeed morally $2$-level. Using Theorem \ref{m2l} to translate this result to an algebraic language, we obtain the following.

\begin{lemma}\label{lem:mor2levelvertexsum}
Let $P,Q$ be polytopes. If $I_P\subseteq T_P$ and $I_Q\subseteq T_Q$, then $I_{P\oplus_{\p} Q}\subseteq T_{P\oplus_{\p} Q}$.
\end{lemma}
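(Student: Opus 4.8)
The plan is to avoid manipulating $I_{P\oplus_{\p}Q}$ and $T_{P\oplus_{\p}Q}$ directly and instead route everything through Theorem \ref{m2l}(1), which identifies the containment of the slack ideal in the toric ideal with the property of being morally $2$-level, i.e.\ with the all-ones evaluation of the symbolic slack matrix having minimal rank. Under this translation the hypotheses $I_P\subseteq T_P$ and $I_Q\subseteq T_Q$ become $\rank S_P(\mathbf{1})=d+1$ and $\rank S_Q(\mathbf{1})=e+1$, while the desired conclusion becomes $\rank S_{P\oplus_{\p}Q}(\mathbf{1})=d+e+1$. So the lemma reduces to a single rank computation, which is exactly the one sketched in the paragraph preceding the statement; I record here how I would organize it.

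First I would invoke the block description of $S_{P\oplus_{\p}Q}(\mathbf{1})$ displayed above — the point that actually needs verification, which follows from the facet list of a vertex sum in the Observation of Section \ref{sec:mcmullen} by tracking which facets of $P\oplus_{\p}Q$ are of the form $F\oplus_{\p}Q$, which are of the form $P\oplus_{\p}G$, and which are joins $F\vee G$. From that description, the submatrix of $S_{P\oplus_{\p}Q}(\mathbf{1})$ on the columns indexed by $V\cup\{\p\}$ has rows equal to the rows of $S_P(\mathbf{1})$, some repeated and with some zero rows appended, so its rank is $\rank S_P(\mathbf{1})=d+1$; symmetrically, the submatrix on the columns indexed by $W\cup\{\p\}$ has rank $\rank S_Q(\mathbf{1})=e+1$. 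The column space of $S_{P\oplus_{\p}Q}(\mathbf{1})$ is the sum of the column space $U_1$ of the first submatrix and the column space $U_2$ of the second, and both $U_1$ and $U_2$ contain the column indexed by $\p$, which is nonzero (its restriction to the rows $\overline{\mathcal{F}}\vee\overline{\mathcal{G}}$ is the all-ones vector, and $\overline{\mathcal{F}},\overline{\mathcal{G}}\neq\emptyset$ whenever neither $P$ nor $Q$ is a point). Hence $\dim(U_1\cap U_2)\geq 1$, so $\rank S_{P\oplus_{\p}Q}(\mathbf{1})=\dim U_1+\dim U_2-\dim(U_1\cap U_2)\leq (d+1)+(e+1)-1=d+e+1$. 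On the other hand Lemma \ref{lr} gives $\rank S_{P\oplus_{\p}Q}(\mathbf{1})\geq \dim(P\oplus_{\p}Q)+1=d+e+1$, so equality holds.

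Finally I would apply Theorem \ref{m2l}(1) in the opposite direction to $P\oplus_{\p}Q$ to obtain $I_{P\oplus_{\p}Q}\subseteq T_{P\oplus_{\p}Q}$. The only genuine obstacle is the bookkeeping behind the block form of $S_{P\oplus_{\p}Q}(\mathbf{1})$; once that is in place, the rank estimate is a two-line application of the dimension formula for a sum of subspaces together with Lemma \ref{lr}. The degenerate cases in which $P$ or $Q$ is a single point should be dispatched separately, since there the vertex sum is just the other polytope and there is nothing to prove.
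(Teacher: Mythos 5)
Your proposal is correct and follows essentially the same route as the paper: both translate the statement via Theorem \ref{m2l}(1) into the rank condition $\rank S_{P\oplus_{\p}Q}(\mathbf{1})=d+e+1$, bound the rank above by observing that the two column-blocks indexed by $V\cup\{\p\}$ and $W\cup\{\p\}$ reproduce the rows of $S_P(\mathbf{1})$ and $S_Q(\mathbf{1})$ and share the column $\p$, and bound it below by the dimension of the vertex sum. Your explicit check that the shared column $\p$ is nonzero (so that the intersection of the two column spaces is at least one-dimensional) is a small point the paper leaves implicit, but the argument is the same.
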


It remains to show that the same implication holds when we reverse the inclusions. This requires more involved reasoning as we see next.

  \begin{theorem}\label{tvs}
  Let $P,Q$ be polytopes. If $T_P\subseteq I_P$ and $T_Q\subseteq I_Q$, then $T_{P\oplus_{\p} Q}\subseteq I_{P\oplus_{\p} Q}$. In particular, if $P$ and $Q$ are graphic then $P\oplus_{\p} Q$ is graphic.
  \end{theorem}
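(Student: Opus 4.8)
The plan is to transfer everything to the non-incidence graph $\G_{P\oplus_\p Q}$ and to the support description of $S_{P\oplus_\p Q}$ recorded just above, exploiting two features of that matrix: it contains a relabelled copy of $S_P$ and of $S_Q$, and its zero pattern lets Laplace expansion collapse large minors onto small ones. Since Lemma~\ref{lem:mor2levelvertexsum} already gives $I_{P\oplus_\p Q}\subseteq T_{P\oplus_\p Q}$, and graphicality is exactly the conjunction of the two inclusions (Theorem~\ref{m2l}), it suffices to prove the stated implication $T_{P\oplus_\p Q}\subseteq I_{P\oplus_\p Q}$. Because $T_{P\oplus_\p Q}$ is generated by the binomials of all oriented cycles of $\G_{P\oplus_\p Q}$ and $I_{P\oplus_\p Q}$ is saturated, the Cycle-Splitting Lemma (Lemma~\ref{lsc}) lets me argue cycle by cycle, and in fact lets me replace a given cycle by a combination of more convenient ones under the ``$+$'' operation.

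The first step is a lifting lemma: every $(d+2)$-minor of the copy of $S_P$ sitting in the columns $V\cup\{\p\}$ of $S_{P\oplus_\p Q}$ (formed from the rows $\mathcal F\oplus_\p Q$ together with, for each $F\in\overline{\mathcal F}$, one chosen row $F\vee G^\ast$) lies in $I_{P\oplus_\p Q}$, and symmetrically for $Q$. The argument mirrors the join case: the rows $P\oplus_\p G$ with $G\in\mathcal G$ vanish on every column in $V\cup\{\p\}$, and by the Flag Lemma (Lemma~\ref{lem:flag}) applied to $Q$ along a flag whose bottom face is $\{\p\}$ one produces $e$ such rows and $e$ columns in $W$ forming a submatrix that is triangular with variables on the diagonal; appending these to the chosen $(d+2)$-minor yields a block lower-triangular $(d+e+2)\times(d+e+2)$ submatrix of $S_{P\oplus_\p Q}$ whose determinant is that minor times a monomial, and saturation finishes it. Hence for every cycle $C$ of $\G_P$ the corresponding lifted cycle in $\G_{P\oplus_\p Q}$ has $f_C\in I_{P\oplus_\p Q}$, since $f_C\in T_P\subseteq I_P$; symmetrically for cycles of $\G_Q$.

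It then remains to treat chordless cycles of $\G_{P\oplus_\p Q}$ that are not lifts of cycles of $\G_P$ or $\G_Q$. I would classify a chordless cycle $C$ by how it meets the three row-blocks and the vertex $\p$. If $C$ uses only the columns $V\cup\{\p\}$ and only rows from $\mathcal F\oplus_\p Q$ and $\overline{\mathcal F}\vee\overline{\mathcal G}$, then either it projects to a genuine cycle of $\G_P$ (handled by the lifting lemma) or, by chordlessness, it collapses to a quadrilateral $a-(F,G_1)-b-(F,G_2)-a$ with $a,b\in V\cup\{\p\}$; the symmetric dichotomy holds on the $Q$-side. A genuinely mixed cycle, using both a vertex of $V$ and a vertex of $W$, must cross between the two sides through facets of $\overline{\mathcal F}\vee\overline{\mathcal G}$, and at each crossing facet $(F_i,G_i)$ I would reroute through its edge to $\p$ (every facet of $\overline{\mathcal F}\vee\overline{\mathcal G}$ is joined to $\p$), using Cycle-Splitting to express $f_C$ through a $P$-confined cycle through $\p$, a $Q$-confined cycle through $\p$, and ``quadrilateral'' binomials $x_{(F_1,G_1),v}x_{(F_2,G_2),w}-x_{(F_2,G_2),v}x_{(F_1,G_1),w}$; each such quadrilateral in turn equals the sum of the two cycle binomials of $v-(F_1,G_1)-\p-(F_2,G_2)-v$ and $w-(F_1,G_1)-\p-(F_2,G_2)-w$, both already in $I_{P\oplus_\p Q}$ by the lifting lemma. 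Finally, the collapsed quadrilaterals $a-(F,G_1)-b-(F,G_2)-a$ reduce (again via Cycle-Splitting against the analogue with $b$ replaced by $\p$) to the case $b=\p$, and for these I would check directly that the $2\times2$ minor on rows $(F,G_1),(F,G_2)$ and columns $a,\p$ extends to a $(d+e+2)$-minor of $S_{P\oplus_\p Q}$ equal to itself times a monomial, by iterated Laplace expansion along the many columns of $S_{P\oplus_\p Q}$ carrying a single nonzero entry among a well-chosen set of rows.

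The main obstacle is precisely this last layer: proving that every chordless cycle does decompose, under ``$+$'', into lifted $\G_P$-cycles, lifted $\G_Q$-cycles, and elementary quadrilaterals, and then disposing of the residual collapsed quadrilaterals, whose binomials do not come from a clean block-triangular Flag-Lemma argument but only from a hands-on determinant expansion. I expect the cleanest organization of the decomposition is to choose a spanning tree of $\G_{P\oplus_\p Q}$ adapted to the block structure — a spanning tree inside the $\G_P$-part, one inside the $\G_Q$-part, and a minimal set of connecting edges through $\p$ and through $\overline{\mathcal F}\vee\overline{\mathcal G}$ — so that every fundamental cycle is visibly of one of the three types; the bookkeeping around the shared vertex $\p$, which in $\G_{P\oplus_\p Q}$ is adjacent to exactly the facets of $\overline{\mathcal F}\vee\overline{\mathcal G}$, is where I expect the argument to be most delicate.
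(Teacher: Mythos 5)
Your proposal follows essentially the same route as the paper: reduce to the single inclusion $T_{P\oplus_{\p}Q}\subseteq I_{P\oplus_{\p}Q}$ via Lemma \ref{lem:mor2levelvertexsum}, lift one-sided cycles into $I_{P\oplus_{\p}Q}$ by padding $(d+2)$-minors of the embedded copy of $S_P$ with a triangular Flag-Lemma block coming from $Q$ and a flag through $\p$, and reroute mixed cycles through $\p$ using the Cycle-Splitting Lemma. The one step you single out as the main obstacle --- the degenerate quadrilaterals on a repeated pair of facets $F\vee G_1$, $F\vee G_2$ --- is exactly the remaining case in the paper's proof, and it is dispatched there by the same block-triangular Flag-Lemma device you doubted would apply (a flag of $P$ ending at $F$, with the row indexed by $F$ and the column indexed by $\uu_1$ deleted), so no genuinely new idea is needed to complete your sketch.
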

  
\begin{proof}
The second statement follows from the first one together with Lemma \ref{lem:mor2levelvertexsum}, so it is enough to prove the first.

 Suppose $T_P \subseteq I_P$ and $T_Q \subseteq I_Q$ and let $C$ be a chordless cycle of $\mathbf{G}_{P\oplus_{\p} Q}$. We have to show that the binomial associated to $C$ is in $I_{P\oplus Q}$.

 Again let $V = \ver(P) \setminus \{ \p \}$ and $W = \ver(Q) \setminus \{ \p \}$. We will continue the proof by cases depending on the form of $C$.

  \textbf{Case 1:} Suppose $C$ contains only vertices from one polytope, which we can by symmetry consider to be $P$. In other words suppose $C$ contains no vertex from $W$, and comes from the submatrix indexed by $V$ and $\p$.
  
  If $C$ does not contain any pair of facets $F \vee G_j$, $F \vee G_k$, for $j \neq k$, then it does not use any pair of rows with the same support (recall that the nonzero rows of this submatrix are just copies of the rows of $S_P$), and therefore there is a submatrix $S$ of $S_{P\oplus_{\p} Q}$ that indexes all the facets and vertices in $C$ and has the same support as $S_P$. Let $T$ be any  $(d+2) \times (d+2)$ submatrix of $S$.

  Note that the submatrix $B$ of $S_Q$ has an $e \times e$ triangular submatrix $M$. To see this, apply the Flag Lemma to it with a flag starting at the vertex $\p$, and then remove the column indexed by $\p$ and the row indexed by the unique facet that does not contain $\p$. Now $\det(T)\cdot\det(M)$ is a $(d+e+2)$-minor of $S_{P\oplus_{\p} Q}$ by the block structure of this matrix. Since $\det(M)$ is a monomial, this proves that the saturation of the ideal $I$ generated by the $(d+2)$-minors of $S$ is contained in $I_{P\oplus_{\p} Q}$. Since $I$ is isomorphic to $I_P$, it contains the binomial $f_C$ by the graphicality of $P$. Thus $f_C\in I\subseteq I_{P\oplus_{\p} Q}$ as we needed to show.  


  Now suppose that $C$ uses a pair of repeated rows; that is to say, it contains a pair of facets $F \vee G_1$, $F \vee G_2$ where $F$ is a facet of $P$ and $G_1, G_2$ are distinct facets of $Q$. Since they are connected to precisely the same nodes in $V \cup \{ \p \}$, then the chordlessness of $C$ forces it to be of the form
   \[C = \uu_1 (F \vee G_1) \uu_2 (F \vee G_2) \uu_1, \]
where $\uu_1,\uu_2\in V \cup \{ \p \}$, which yields the binomial $f_C = z_{11}z_{22}-z_{12}z_{21} \in T_{P\oplus_{\p}Q}$.

  Remember that the submatrix $B$ of $S_Q$ has an $e \times e$ triangular submatrix $M$. Furthermore, by applying the Flag Lemma to $P$ with a flag ending at $F$, $S_P$ has a $(d+1)\times(d+1)$ upper triangular submatrix $U$ where the bottom row is indexed by $F$ and the rightmost column by $\mathbf{u}_1$. Let $N$ be the matrix obtained from $U$ by removing that row and column. Looking at the structure of $S_{P\oplus_{\p} Q}$, we then see that it has a $(d+e+2)\times (d+e+2)$ submatrix of the form
\[
\kbordermatrix{
          &   &   & \mathbf{u}_1 & \mathbf{u}_2 \\
          & M & O & \mathbf{0}   & \mathbf{0}   \\
          & * & N & *   & *   \\
F\vee G_1 & * & \mathbf{0}^T & z_{11} & z_{12} \\
F\vee G_2 & * & \mathbf{0}^T & z_{21} & z_{22} }.\]
The determinant of this submatrix equals $\det(M) \cdot \det(N) \cdot f_C$ which is a monomial times $f_C$, and thus $f_C \in I_{P \oplus_{\p} Q}$.

  \textbf{Case 2:} Suppose now that $C$ contains a vertex of $V$ and a vertex of $W$. By analyzing the structure of the slack matrix, we see that to go from a vertex in $V$ to a vertex in $W$ in the graph $G_{P \oplus_{\p} Q}$, one must go through a facet of the form $F \vee G$. For the cycle $C$ to pass through $V$ and $W$, that must happen at least twice. Since $\p$ is connected to all those facets, if $\p$ were in $C$ it would have a chord. Hence we know that $\p \not \in C$ and so
 we can write
  \[ C = (F_0 \vee G_0) A_0 (F_1 \vee G_1) A_1 \dots A_{2k-2}(F_{2k-1} \vee G_{2k-1})A_{2k-1}(F_0 \vee G_0)\]
  where $A_i$ is a path whose vertices are all in $V$ (if $i$ is even) or all in $W$ (if $i$ is odd) and $F_i$ (resp. $G_i$) are facets of $P$ (resp. of $Q$) that do not contain $\p$. But then there is an edge in the nonincidence graph from $\p$ to $F_i \vee G_i$ for each $i$, so the graph contains cycles $C_0, \dots, C_{2k-1}$ where
  \[ C_i = (F_i \vee G_i)A_i(F_{i+1} \vee G_{i+1}) \p (F_i \vee G_i)\]
  with indices taken modulo $2k$. In particular, $C_i$ never contains both a vertex in $V$ and a vertex in $W$, so for each $i$, $f_{C_i} \in  I_{P\oplus_{\p}Q}$ by the previous case. Then by the Cycle-Splitting Lemma (Lemma \ref{lsc}), $f_C \in I_{P\oplus_{\p}Q}$.
\end{proof}

\subsection{The vertex splitting operation}

We proceed now with the operation of vertex splitting. The study of graphicality under this operation turns out to be more delicate, as it is not the case that it is unconditionally preserved.

In terms of slack matrices, the operation of vertex splitting is again quite simple. As in the previous section, denote by $\mathcal{F}$ and $\overline{\mathcal{F}}$ the sets of facets of $P$  that, respectively, contain or do not contain $\p$. Furthermore, let $V:=\ver(P)\setminus\lbrace \p\rbrace$. If the symbolic slack matrix of $P$ has support of the form
\[
S_P(\mathbf{1})=\kbordermatrix{
& V & \p\\
\mathcal{F} & A & \mathbf{0}\\
\overline{\mathcal{F}} & \overline{A} & \mathbf{1}
}\, \]
then the support of the slack matrix of the vertex split at $\p$ is
\[
S_{P_{\p}} = \kbordermatrix{
& V & \overline{\p} & \widehat{\p}\\
\mathcal{F} & A & \mathbf{0} & \mathbf{0}\\
\overline{\mathcal{F}}  & \overline{A} & \mathbf{1} & \mathbf{0}\\
\widehat{\mathcal{F}} & \overline{A} & \mathbf{0} & \mathbf{1}
}.\ \]
As with vertex sums, it is not difficult to see that this preserves moral $2$-levelness. We need to prove that $S_{P_{\p}}(\mathbf{1})$ has rank $d+2$ when $S_P(\mathbf{1})$ has rank $d+1$. Indeed, we get that the submatrix of $S_{P_{\p}}(\mathbf{1})$ with rows indexed by $\mathcal{F}$ and $\overline{\mathcal{F}}$ is exactly $S_P(\mathbf{1})$ with a zero column added, hence has also rank $d+1$. Adding the rows indexed by $\widehat{\mathcal{F}}$ increases the rank by just one, since they can all be attained from the corresponding rows of
$\overline{\mathcal{F}}$ by adding $\begin{bmatrix}0 & \cdots & 0 & -1 & 1\end{bmatrix}$ which is the difference between the first row indexed by $\widehat{\mathcal{F}}$ and the first row indexed by $\overline{\mathcal{F}}$. In algebraic terms, we have just proved the following result.

\begin{lemma}
 Let $P_{\p}$ be obtained from $P$ by splitting a vertex $\p$. If $I_P\subseteq T_P$, then $I_{P_{\p}}\subseteq T_{P_{\p}}$.
\end{lemma}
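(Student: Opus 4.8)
The plan is to reduce the desired ideal containment to a rank statement about $S_{P_{\p}}(\mathbf{1})$, using the dictionary provided by Theorem \ref{m2l}(1) together with the rank characterization of (generalized) slack matrices. Since $P$ is $d$-dimensional, $I_P \subseteq T_P$ is equivalent to $P$ being morally $2$-level, which in turn is equivalent to $\rank S_P(\mathbf{1}) = d+1$. Likewise, as $P_{\p}$ is $(d+1)$-dimensional, the conclusion $I_{P_{\p}} \subseteq T_{P_{\p}}$ is equivalent to $\rank S_{P_{\p}}(\mathbf{1}) = d+2$. So it suffices to prove that $\rank S_P(\mathbf{1}) = d+1$ implies $\rank S_{P_{\p}}(\mathbf{1}) = d+2$.

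For the rank computation I would work directly with the block form of $S_{P_{\p}}(\mathbf{1})$ displayed just above, with rows partitioned into $\mathcal{F}$, $\overline{\mathcal{F}}$, $\widehat{\mathcal{F}}$ and columns into $V$, $\overline{\p}$, $\widehat{\p}$. The rows indexed by $\mathcal{F} \cup \overline{\mathcal{F}}$ form exactly $S_P(\mathbf{1})$ with one extra all-zero column (the $\widehat{\p}$ column) appended, so that block has rank $d+1$ and its row space lies inside the hyperplane where the $\widehat{\p}$-coordinate is $0$. Each row indexed by $\widehat{\mathcal{F}}$ is obtained from the corresponding row indexed by $\overline{\mathcal{F}}$ by adding the single fixed vector $\mathbf{e}$ that is zero on the $V$-block, $-1$ in the $\overline{\p}$ slot, and $1$ in the $\widehat{\p}$ slot. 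Hence the row space of $S_{P_{\p}}(\mathbf{1})$ is spanned by the $(d+1)$-dimensional row space above together with $\mathbf{e}$, giving $\rank S_{P_{\p}}(\mathbf{1}) \le d+2$; and since $\mathbf{e}$ has nonzero $\widehat{\p}$-coordinate it cannot lie in that row space, so the rank is exactly $d+2$. (Once the upper bound $\rank S_{P_{\p}}(\mathbf{1}) \le d+2$ is in hand, the matching lower bound is in any case immediate from Lemma \ref{lr}, so one could dispense with the $\mathbf{e}$-is-new argument.) Unwinding the dictionary then yields $I_{P_{\p}} \subseteq T_{P_{\p}}$.

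I do not expect a genuine obstacle here: once the block structure of $S_{P_{\p}}(\mathbf{1})$ is written out, the whole thing is a short linear-algebra observation, and the only point needing a moment's care is checking that appending the $\widehat{\mathcal{F}}$ rows raises the rank by exactly one rather than zero — which is settled either by the $\widehat{\p}$-coordinate argument above or by invoking Lemma \ref{lr}. Note that, unlike the vertex sum and join cases, here only the inclusion $I \subseteq T$ direction is being claimed; the reverse inclusion (and hence graphicality) will need the extra hypothesis that $P$ is not a vertex sum at $\p$, and is presumably the subject of the next results.
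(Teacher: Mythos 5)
Your proposal is correct and follows essentially the same route as the paper: the authors likewise reduce the claim to showing that $\rank S_{P_{\p}}(\mathbf{1})=d+2$ whenever $\rank S_P(\mathbf{1})=d+1$, observe that the $\mathcal{F}\cup\overline{\mathcal{F}}$ rows form $S_P(\mathbf{1})$ with a zero column appended, and note that the $\widehat{\mathcal{F}}$ rows are obtained from the $\overline{\mathcal{F}}$ rows by adding the single vector $\begin{bmatrix}0&\cdots&0&-1&1\end{bmatrix}$, raising the rank by exactly one. Your extra remark that the lower bound could instead be supplied by Lemma \ref{lr} is a minor (valid) embellishment, not a different method.
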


In order to study the other inclusions, we need to consider more closely the structure of the non-incidence graph of the vertex split. In Figure \ref{fig:graphvsplit} one can see the structure of the graph of the vertex split described above.

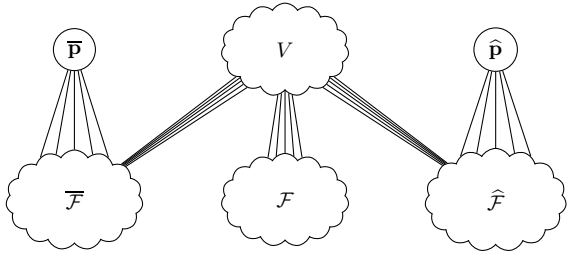
\begin{figure}[ht]
\begin{tikzpicture}[scale=0.8, every node/.style={scale=0.8}]

    \node[cloud, draw,cloud puffs=15,cloud puff arc=120, aspect=1.5, inner ysep=1em] (1) at (0,1.5) {$V$};
    \node[cloud, draw,cloud puffs=15,cloud puff arc=120, aspect=1.5, inner ysep=1em] (2) at (-3.5, -1)  {$\overline{\mathcal{F}}$};
    \node[cloud, draw,cloud puffs=15,cloud puff arc=120, aspect=1.5, inner ysep=1em,minimum width=0.5cm] (3) at (3.5, -1) {$\widehat{\mathcal{F}}$};
    \node[cloud, draw,cloud puffs=15,cloud puff arc=120, aspect=1.5, inner ysep=1em,minimum width=0.5cm] (4) at (0, -1) {$\mathcal{F}$};
    \node[draw, circle] (5) at (-3.5,1.5) {$\overline{\p}$};
    \node[draw, circle] (6) at (3.5,1.5) {$\widehat{\p}$};

\draw (1) -- (4);
\draw ([xshift=0.5cm]1) -- ([xshift=0.5cm]4);
\draw ([xshift=-0.5cm]1) -- ([xshift=-0.5cm]4);
\draw ([xshift=-1cm]1) -- ([xshift=-1cm]4);
\draw ([xshift=1cm]1) -- ([xshift=1cm]4);

\draw (1) -- (2);
\draw ([xshift=0.5cm]2) -- ([xshift=2cm]1);
\draw ([xshift=-0.5cm]2) -- ([xshift=-2cm]1);
\draw ([xshift=-1cm]2) -- ([xshift=-4cm]1);
\draw ([xshift=1cm]2) -- ([xshift=4cm]1);

\draw (1) -- (3);
\draw ([xshift=0.5cm]3) -- ([xshift=1cm]1);
\draw ([xshift=-0.5cm]3) -- ([xshift=-1cm]1);
\draw ([xshift=-1cm]3) -- ([xshift=-2cm]1);
\draw ([xshift=1cm]3) -- ([xshift=2cm]1);

\draw (6) -- (3);
\draw ([xshift=2cm]6) -- ([xshift=2cm]3);
\draw ([xshift=-2cm]6) -- ([xshift=-2cm]3);
\draw ([xshift=-1cm]6) -- ([xshift=-1cm]3);
\draw ([xshift=1cm]6) -- ([xshift=1cm]3);

\draw (5) -- (2);
\draw ([xshift=2cm]5) -- ([xshift=2cm]2);
\draw ([xshift=-2cm]5) -- ([xshift=-2cm]2);
\draw ([xshift=-1cm]5) -- ([xshift=-1cm]2);
\draw ([xshift=1cm]5) -- ([xshift=1cm]2);

\end{tikzpicture}
\caption{Structure of the non-incidence graph of a vertex split\label{fig:graphvsplit}}
\end{figure}

We will be especially interested in two classes of cycles. The first is the class of $4$-cycles of the form $$ \overline{F} \vv \widehat{F} \w \overline{F} $$ where $\vv,\w \in V$, and
$\overline{F} \in \overline{\mathcal{F}}$ and  $\widehat{F} \in \widehat{\mathcal{F}}$ are associated to the same facet $F$ of $P$. We will say these are cycles \emph{of type A}. The second class, the \emph{cycles of type B}, are those $8$-cycles of the form $$\overline{\p} \overline{F} \vv \widehat{F} \widehat{\p} \widehat{G} \w \overline{G} \overline{\p}$$
where, as before,  $\vv, \w \in V$,
$\overline{F}, \overline{G} \in \overline{\mathcal{F}}$ and  $\widehat{F}, \widehat{G} \in \widehat{\mathcal{F}}$,
$\overline{F}$ and $\widehat{F}$ are derived from the same facet $F$ of $P$ and similarly with $\overline{G}$ and $\widehat{G}$. It turns out that these are the two types of cycles that must be checked in order for the inclusion of the toric ideal in the slack ideal to be maintained. 

\begin{proposition} \label{prop:vertsplitcycleclasses}
Let $P_{\p}$ be the vertex split of $P$ as above. If $T_P \subseteq I_P$, then $T_{P_{\p}}\subseteq I_{P_{\p}}$ if and only if the binomials associated to all cycles of type A and B are in $I_{P_{\p}}$.
\end{proposition}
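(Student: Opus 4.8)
The forward direction is trivial: if $T_{P_{\p}} \subseteq I_{P_{\p}}$, then since the binomials of type A and type B cycles are particular elements of $T_{P_{\p}}$ (each is $f_C$ for a chordless cycle $C$ — one checks directly from Figure \ref{fig:graphvsplit} that these $4$- and $8$-cycles are indeed chordless), they lie in $I_{P_{\p}}$. So the work is entirely in the converse. Assume $T_P \subseteq I_P$ and that $f_C \in I_{P_{\p}}$ for every type A and type B cycle $C$; I must show $f_C \in I_{P_{\p}}$ for an \emph{arbitrary} chordless cycle $C$ of $\mathbf{G}_{P_{\p}}$. The strategy is to classify chordless cycles $C$ by which of the two ``new'' vertices $\overline{\p}, \widehat{\p}$ they visit, and in each case either reduce to a cycle coming from $P$ itself (where graphicality of $P$, i.e. $T_P \subseteq I_P$, gives the result after a Flag-Lemma padding argument exactly as in the proof of Theorem \ref{tvs}), or decompose $C$ via the Cycle-Splitting Lemma (Lemma \ref{lsc}) into pieces that are either of type A/B or already handled.

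First I would dispose of the case where $C$ uses neither $\overline{\p}$ nor $\widehat{\p}$. Looking at the support of $S_{P_{\p}}$, the rows $\overline{\mathcal{F}}$ and $\widehat{\mathcal{F}}$ are copies of the rows $\overline{A}$, and $\mathcal{F}$ gives the rows $A$; so the subgraph on $V \cup \mathcal{F} \cup \overline{\mathcal{F}} \cup \widehat{\mathcal{F}}$ is obtained from $\mathbf{G}_P$ (restricted to $V$ and all facets) by \emph{duplicating} the $\overline{\mathcal{F}}$-side. A chordless cycle here either never uses two facet-rows with equal support — then it maps to a chordless cycle of $\mathbf{G}_P$ and we pad with a triangular block from the rows indexed by $\widehat{\mathcal{F}}$ (or by a $\widehat{\mathcal{F}}$–$\widehat{\p}$ block, using the Flag Lemma as in Case 1 of Theorem \ref{tvs}) to realize $f_C$ as a monomial multiple of a $(d+3)$-minor of $S_{P_{\p}}$, whence $f_C \in I_{P_{\p}}$ by graphicality of $P$ and saturation — or it does use a matched pair $\overline{F}, \widehat{F}$, in which case chordlessness forces $C = \vv\,\overline{F}\,\w\,\widehat{F}\,\vv$, a cycle of type A, so $f_C \in I_{P_{\p}}$ by hypothesis.

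Next, cycles using exactly one of $\overline{\p}, \widehat{\p}$ (say $\overline{\p}$, by the symmetry $\overline{\p} \leftrightarrow \widehat{\p}$, $\overline{\mathcal{F}} \leftrightarrow \widehat{\mathcal{F}}$ of the slack matrix): since $\overline{\p}$ is adjacent in $\mathbf{G}_{P_{\p}}$ only to the rows of $\widehat{\mathcal{F}}$ and of $\mathcal{F}$, such a $C$ enters and leaves $\overline{\p}$ through two such facet-rows $H_1, H_2$, and the rest of $C$ is a path among $V$ and facet-rows. One then splits $C$ at $\overline{\p}$: replacing the arc through $\overline{\p}$ by a direct edge is not available, but $\overline{\p}$ together with any $\vv \in V$ and the two endpoints gives short cycles reducing the problem — more cleanly, I would argue that $H_1, H_2 \in \widehat{\mathcal{F}}$ forces (again by chordlessness, since both are also joined to the same $V$-neighbours as their $\overline{\mathcal{F}}$-partners) a very restricted shape, and handle $H_1 \in \mathcal{F}$ separately, replacing $\overline{\p}$ by $\widehat{\p}$ to land in the ``no $\overline{\p}$'' case or by noting the relevant $2\times 2$ minor directly. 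Finally, for $C$ using \emph{both} $\overline{\p}$ and $\widehat{\p}$: $C$ passes through $\overline{\p}$ via two rows and through $\widehat{\p}$ via two rows; I split $C$ into two cycles by inserting, at a matched facet pair $\overline{F}/\widehat{F}$, the edge $\overline{F}\,\widehat{F}$ that exists through a common $V$-vertex — iterating the Cycle-Splitting Lemma reduces $C$ to a sum of type B cycles and cycles using at most one of $\overline{\p},\widehat{\p}$, all already in $I_{P_{\p}}$.

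**The main obstacle.** The genuinely delicate point is the bookkeeping in the last two cases: identifying precisely which chordless configurations through $\overline{\p}$ and $\widehat{\p}$ can occur, and checking that every such cycle really does decompose — under the operation $C_1 + C_2$ of the Cycle-Splitting Lemma — into type A, type B, and $P$-inherited cycles, without the decomposition introducing a chord-free cycle of some new shape. In other words, the content of the proposition is exactly the claim that types A and B \emph{suffice}, and verifying sufficiency requires a careful, essentially finite case analysis of the local structure of $\mathbf{G}_{P_{\p}}$ near the two split vertices, leaning throughout on the Flag Lemma (to pad minors) and Lemma \ref{lsc} (to glue cycles), in close parallel to Cases 1 and 2 of the proof of Theorem \ref{tvs}.
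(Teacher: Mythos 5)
Your reduction has the right ingredients (Cycle-Splitting, type A/B cycles, Flag-Lemma padding), but the case analysis as you organize it does not go through, for two concrete reasons. First, you misread the graph: from the support of $S_{P_{\p}}$, the column $\overline{\p}$ is nonzero exactly in the rows indexed by $\overline{\mathcal{F}}$, so $\overline{\p}$ is adjacent \emph{only} to $\overline{\mathcal{F}}$ (and $\widehat{\p}$ only to $\widehat{\mathcal{F}}$) --- not to ``the rows of $\widehat{\mathcal{F}}$ and of $\mathcal{F}$'' as you assert. This error underlies your treatment of cycles through one or both of the split vertices, which in any case you leave as an unverified sketch. Second, and more seriously, your base case is broken: a chordless cycle avoiding both $\overline{\p}$ and $\widehat{\p}$ can use rows from $\overline{\mathcal{F}}$ \emph{and} from $\widehat{\mathcal{F}}$ attached to distinct facets (e.g.\ a $4$-cycle $\vv\,\overline{F}\,\w\,\widehat{G}\,\vv$ with $F\neq G$, which is automatically chordless in a bipartite graph). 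For such a mixed cycle the padding fails: if the $(d+2)\times(d+2)$ submatrix $T$ carrying $f_C$ contains a row from $\widehat{\mathcal{F}}$, then adjoining the column $\widehat{\p}$ and one further row of $\widehat{\mathcal{F}}$ does not yield a block-triangular matrix (the new column has variable entries in \emph{all} the $\widehat{\mathcal{F}}$-rows of $T$), so the $(d+3)$-minor is not a monomial times $\det T$ and you cannot conclude $f_C\in I_{P_{\p}}$; padding by $\overline{\p}$ fails symmetrically. There is also a structural issue you only half-acknowledge: your decompositions produce cycles with chords, so the statement must be established for \emph{all} oriented cycles, not just chordless ones.

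The paper avoids all of this with a single induction on the number of nodes of an arbitrary oriented cycle $C$ lying in $\widehat{\mathcal{F}}$. If there are none, $C$ lives in the honest copy of $S_P$ given by the rows $\mathcal{F}\cup\overline{\mathcal{F}}$ and columns $V\cup\{\overline{\p}\}$, where padding by $\widehat{\p}$ and one $\widehat{\mathcal{F}}$-row genuinely is block-triangular, and $T_P\subseteq I_P$ finishes. Otherwise, pick $\widehat{F}\in C$: if $\widehat{F}$ is not adjacent to $\widehat{\p}$ in $C$, write $C=C_1+C_2$ with $C_2$ the type A cycle $\overline{F}\vv\widehat{F}\w\overline{F}$ and $C_1$ obtained by replacing $\widehat{F}$ with $\overline{F}$; if it is, replace the segment $\vv\widehat{F}\widehat{\p}\widehat{G}\w$ by $\vv\overline{F}\overline{\p}\overline{G}\w$ using a type B cycle. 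Either way $C_1$ has strictly fewer $\widehat{\mathcal{F}}$-nodes and the Cycle-Splitting Lemma closes the induction. You would need some such systematic replacement scheme, rather than a direct identification with $\mathbf{G}_P$, to repair your first case and to make the remaining cases precise.
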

\begin{proof}
We simply need to show that if all the binomials associated to cycles of type A and B are in $I_{P_{\p}}$, then for any oriented cycle $C$ of $\G_{P_{\p}}$, its associated binomial $f_C$ is in $I_{P_{\p}}$, as the other implication is trivial. We will prove this by induction on the number of nodes of $C$ belonging to $\widehat{\mathcal{F}}$. 

We start by considering the case in which $C$ does not contain any elements of $\widehat{\mathcal{F}}$.
In this case, $C$ also cannot contain $\widehat{\p}$ since it only connects to $\widehat{\mathcal{F}}$. So the cycle is entirely contained in the submatrix obtained from $S_{P_{\p}}$ by removing the rows indexed by $\widehat{\mathcal{F}}$ and the column indexed by $\widehat{\p}$. We can identify this submatrix with $S_P$. Under this identification we have that $f_C$ is in $I_P$, but any $(d+2)$-minor of $S_P$ can be completed to a $(d+3)$-minor of  $S_{P_{\p}}$ by adding one row from $\widehat{\mathcal{F}}$ and column $\widehat{\p}$. The block structure of such a matrix guarantees that we are only multiplying the old minor by a new variable that will be saturated out, hence $f_C \in I_P \subseteq I_{P_{\p}}$ and we obtain the desired result.

Suppose now that $C$ includes an element $\widehat{F}$ in $\widehat{\mathcal{F}}$. We consider the following two cases.

If $\widehat{F}$ is not connected to $\widehat{\p}$ in $C$, then we can write
\[ C = \vv \widehat{F} \w \Gamma \vv \]
where $\vv, \w \in V$ and $\Gamma$ is a path.
Then we can write $C = C_1 + C_2$ where
\[ C_1 = \vv \overline{F} \w \Gamma \vv, \, C_2 = \overline{F} \vv \widehat{F} \w \overline{F}.\]
Now $C_1$ is an oriented cycle containing one fewer element of $\widehat{\mathcal{F}}$ than $C$, so $f_{C_1}\in I_{P_{\p}}$ by induction. Also $C_2$ is a cycle of type $A$, so $f_{C_2}\in I_{P_{\p}}$ by assumption. Thus by the Cycle-Splitting Lemma, we conclude that $f_C \in I_{P_{\p}}$. 

On the other hand, if $\widehat{F}$ is connected to $\widehat{\p}$ in $C$, then we can write
\[ C = \vv \widehat{F} \widehat{\p} \widehat{G} \w \Gamma \vv \]
where $\vv, \w \in V$, $\widehat{G} \in  \widehat{\mathcal{F}}$ and $\Gamma$ is a path. In this case, we can write $C = C_1 + C_2$ where
\[ C_1 = \vv \overline{F} \overline{\p} \overline{G} \w \Gamma \vv, \, C_2 = \overline{\p}\overline{F} \vv \widehat{F} \widehat{\p} \widehat{G} \w \overline{G}\overline{\p}.\]
Again $C_1$ contains fewer elements of $\widehat{\mathcal{F}}$ than $C$, so by induction $ f_{C_1}\in I_{P_{\p}}$. Also $C_2$ is a cycle of type $B$, so $f_{C_2}\in I_{P_{\p}}$ by assumption.  Thus by the Cycle-Splitting Lemma, we conclude that $f_C \in I_{P}$.

\end{proof}

\begin{corollary} \label{cor:typeA} Suppose $T_P\subseteq I_P$ and the binomials associated to all cycles of type A belong to $I_{P_{\p}}$. If $C$ is an oriented cycle of $\G_{P_{\p}}$ that either does not contain $\widehat{\p}$ or does not contain $\overline{\p}$, then $f_C \in I_{P_{\p}}$.  
\end{corollary}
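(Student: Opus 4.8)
The plan is to re-run the induction in the proof of Proposition \ref{prop:vertsplitcycleclasses}, but to notice that the type B cycles there are invoked \emph{only} in the branch where the current $\widehat{\mathcal{F}}$-node of $C$ is joined to $\widehat{\p}$ inside $C$. Hence a cycle $C$ with $\widehat{\p}\notin C$ never reaches that branch and can be resolved using type A cycles alone. So I would first treat the case $\widehat{\p}\notin C$, inducting on the number of nodes of $C$ lying in $\widehat{\mathcal{F}}$. The base case, when $C$ meets no node of $\widehat{\mathcal{F}}$, is exactly the base case of Proposition \ref{prop:vertsplitcycleclasses}: such a $C$ lies inside the copy of $S_P$ sitting in $S_{P_{\p}}$ (the rows $\mathcal{F}\cup\overline{\mathcal{F}}$ and columns $V\cup\overline{\p}$), so $f_C\in T_P\subseteq I_P$ by hypothesis, and by the Flag Lemma every $(d+2)$-minor of that copy extends — by adjoining one row of $\widehat{\mathcal{F}}$ and the column $\widehat{\p}$ — to a $(d+3)$-minor of $S_{P_{\p}}$ equal to it times a single fresh variable that is then saturated out; thus $I_P\subseteq I_{P_{\p}}$ and $f_C\in I_{P_{\p}}$.

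For the inductive step (still with $\widehat{\p}\notin C$), suppose $C$ contains some $\widehat{F}\in\widehat{\mathcal{F}}$. In $\G_{P_{\p}}$ the only neighbours of $\widehat{F}$ are $\widehat{\p}$ and the vertices in $V$; since $\widehat{\p}\notin C$, both cycle-neighbours of $\widehat{F}$ lie in $V$, so $C=\vv\,\widehat{F}\,\w\,\Gamma\,\vv$ with $\vv,\w\in V$ and $\Gamma$ a path. Write $C=C_1+C_2$ with $C_1=\vv\,\overline{F}\,\w\,\Gamma\,\vv$ and $C_2=\overline{F}\,\vv\,\widehat{F}\,\w\,\overline{F}$, where $\overline{F}\in\overline{\mathcal{F}}$ is the facet-node paired with $\widehat{F}$; then $C_2$ is a cycle of type A. Since $\Gamma\subseteq C$, the cycle $C_1$ still avoids $\widehat{\p}$ and has exactly one fewer $\widehat{\mathcal{F}}$-node than $C$, so $f_{C_1}\in I_{P_{\p}}$ by induction; $f_{C_2}\in I_{P_{\p}}$ by hypothesis; and the Cycle-Splitting Lemma (Lemma \ref{lsc}) gives $f_C=f_{C_1+C_2}\in I_{P_{\p}}$.

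Finally, the case $\overline{\p}\notin C$ follows by symmetry. The support of $S_{P_{\p}}$ is unchanged under simultaneously interchanging the row blocks $\overline{\mathcal{F}}\leftrightarrow\widehat{\mathcal{F}}$ and the columns $\overline{\p}\leftrightarrow\widehat{\p}$; the induced relabelling of variables is a ring automorphism $\sigma$ carrying $S_{P_{\p}}$ to itself up to a permutation of rows and columns, hence fixing $I_{P_{\p}}$ (minors are permutation-invariant, and the saturating monomial $\prod x_i$ is $\sigma$-invariant), while $\sigma$ is also a graph automorphism of $\G_{P_{\p}}$ permuting the family of type A cycles. Thus the two hypotheses are $\sigma$-invariant, $\sigma(C)$ is a cycle avoiding $\widehat{\p}$, and the case already proved yields $f_{\sigma(C)}\in I_{P_{\p}}$, so $f_C\in I_{P_{\p}}$. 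I do not expect a genuine obstacle here: the entire content is the observation that, restricted to $\widehat{\p}$-free (equivalently, by symmetry, $\overline{\p}$-free) cycles, the induction of Proposition \ref{prop:vertsplitcycleclasses} never enters the type-B branch; the remaining work is routine bookkeeping of the cycle decomposition and of the $\sigma$-invariance of the data.
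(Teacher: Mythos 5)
Your proof is correct and follows essentially the same route as the paper's: observe that the type-B branch of the induction in Proposition \ref{prop:vertsplitcycleclasses} is entered only when $\widehat{\p}$ lies on the cycle, so $\widehat{\p}$-free cycles are resolved by type-A cycles alone, and then handle the $\overline{\p}$-free case by the symmetry interchanging $\widehat{\mathcal{F}}\leftrightarrow\overline{\mathcal{F}}$ and $\widehat{\p}\leftrightarrow\overline{\p}$. You have simply written out in full the re-run of the induction and the invariance of $I_{P_{\p}}$ and of the type-A cycles under that symmetry, which the paper leaves implicit.
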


\begin{proof} If $C$ does not contain $\widehat{\p}$, then the conclusion follows as in the proof of Proposition \ref{prop:vertsplitcycleclasses} because cycles of type B are only invoked when $\widehat{\p}$ appears in the original cycle. If $C$ does not contain $\overline{\p}$, then again apply the proof of Proposition \ref{prop:vertsplitcycleclasses} but interchanging $\widehat{\mathcal{F}}$ with $\overline{\mathcal{F}}$ and $\widehat{\p}$ with $\overline{\p}$. 
\end{proof}

We proceed to show when even these special cycles need not be checked. This can be characterized purely in terms of the non-incidence graph of the original polytope as follows. 

\begin{lemma} \label{lem:cycleaux}
Let $P$ be a polytope such that $T_P \subseteq I_P$ and $\G$ be the connected component of $\G_P$ that contains $\p$.  Then:
\begin{enumerate}
\item If the graph obtained from $\G$ by removing $\p$ is connected, then $T_{P_{\p}}\subseteq I_{P_{\p}}$ if and only if the binomials associated to all cycles of type A are in $I_{P_{\p}}$.
\item If the graph obtained from $\G$ by removing $\p$ and all its neighbors is connected, then the binomials associated to all cycles of types A are in $I_{P_{\p}}$.
\end{enumerate}
\end{lemma}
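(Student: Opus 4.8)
The plan is to exploit Proposition \ref{prop:vertsplitcycleclasses} and Corollary \ref{cor:typeA} directly. For part (1): by Proposition \ref{prop:vertsplitcycleclasses}, once $T_P \subseteq I_P$ and all type A binomials lie in $I_{P_\p}$, it remains only to show that every type B cycle binomial also lies in $I_{P_\p}$. Recall a type B cycle has the form $\overline{\p}\,\overline{F}\,\vv\,\widehat{F}\,\widehat{\p}\,\widehat{G}\,\w\,\overline{G}\,\overline{\p}$ with $\vv,\w \in V$. The idea is that since $\G \setminus \p$ is connected, there is a path $\Gamma$ in $\G \setminus \p$ from $\vv$ to $\w$; lifting this path into $\G_{P_\p}$ (using the copy of $\overline{\mathcal{F}}$-rows, say), we can close off a cycle $C' = \vv\,\widehat{F}\,\widehat{\p}\,\widehat{G}\,\w\,\Gamma\,\vv$ that contains $\widehat{\p}$ but not $\overline{\p}$, and a cycle $C'' = \vv\,\overline{F}\,\overline{\p}\,\overline{G}\,\w\,\Gamma\,\vv$ containing $\overline{\p}$ but not $\widehat{\p}$ (reversing $\Gamma$ appropriately). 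By Corollary \ref{cor:typeA}, both $f_{C'}$ and $f_{C''}$ lie in $I_{P_\p}$, and the type B cycle is (up to orientation) $C' - C''$ or $C' + \overline{C''}$; so the Cycle-Splitting Lemma (Lemma \ref{lsc}) gives $f_B \in I_{P_\p}$. One must check the orientations line up so that $\Gamma$ cancels; this is the kind of bookkeeping the Cycle-Splitting Lemma is designed to absorb, since $C_1 + C_2$ is defined precisely to delete oppositely-oriented shared edges.

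For part (2): here we assume $\G$ minus $\p$ \emph{and all neighbors of $\p$} is connected, and we must show all type A binomials are in $I_{P_\p}$. A type A cycle is $\overline{F}\,\vv\,\widehat{F}\,\w\,\overline{F}$ with $\vv,\w \in V$ and $\overline{F},\widehat{F}$ both derived from the facet $F \in \overline{\mathcal{F}}$. Note $F \in \overline{\mathcal{F}}$ means $\p \notin F$, i.e. $F$ is a neighbor of $\p$ in $\G_P$; so $\vv,\w$ are vertices \emph{not} on $F$, hence $\vv,\w$ need not be neighbors of $\p$, but could be. The subtlety: the hypothesis gives a path from $\vv$ to $\w$ avoiding $\p$ and its neighbors only if $\vv, \w$ themselves are in that reduced graph, i.e. are neither $\p$ nor neighbors of $\p$. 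Since $\vv, \w \in V$ they are not $\p$; if moreover neither is adjacent to $\p$, then there is a path $\Gamma$ from $\vv$ to $\w$ in $\G \setminus (\{\p\} \cup N(\p))$, in particular a path avoiding the facet $F$ (as $F \in N(\p)$). Then $\overline{F}\,\vv\,\Gamma\,\w\,\overline{F}$ and $\widehat{F}\,\vv\,\Gamma\,\w\,\widehat{F}$ are cycles of $\G_{P_\p}$ not touching $\widehat{\p}$ (the first) and we can analyze them via the $S_P$-identification as in Proposition \ref{prop:vertsplitcycleclasses}, getting their binomials into $I_{P_\p}$ from $T_P \subseteq I_P$; their difference, after $\Gamma$ cancels, is the type A binomial, so Lemma \ref{lsc} finishes. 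The remaining case is when $\vv$ or $\w$ is adjacent to $\p$: I expect this to be handled by using $\p$ itself as a relay — e.g. $\vv$ adjacent to $\p$ gives the edge $\vv\,\overline{\p}$ and $\vv\,\widehat{\p}$ in $\G_{P_\p}$ — reducing to a shorter cycle through $\overline{\p}$ or $\widehat{\p}$ and invoking Corollary \ref{cor:typeA} or a small direct Flag Lemma computation.

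The main obstacle I anticipate is part (2), specifically the case distinction on whether the vertices $\vv,\w$ appearing in a type A cycle are neighbors of $\p$. The "generic" case (neither is a neighbor) is clean because the connectivity hypothesis applies verbatim. The degenerate cases require either routing through $\p$'s split copies $\overline{\p}, \widehat{\p}$ or a direct Flag Lemma argument producing a monomial multiple of the relevant $2\times 2$ binomial as a $(d+3)$-minor of $S_{P_\p}$, exactly in the style of Case 1 of Theorem \ref{tvs}; I would organize it so that in every case the type A binomial is written as a sum (in the Cycle-Splitting sense) of cycles already known to be good, so that Lemma \ref{lsc} is the only tool needed at the end. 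Throughout, the recurring technical point is that identifying a submatrix of $S_{P_\p}$ with $S_P$ (after deleting a copied block of rows and the column $\widehat{\p}$ or $\overline{\p}$) lets us import $T_P \subseteq I_P$, and completing such a minor with one extra row/column only multiplies by a saturated-out variable.
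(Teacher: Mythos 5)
Your proposal is correct and follows essentially the same route as the paper: part (1) splits a type B cycle along a path $\Gamma$ from $\vv$ to $\w$ avoiding $\p$ into one cycle missing $\widehat{\p}$ and one missing $\overline{\p}$, and part (2) splits a type A cycle along a path through $V\cup\mathcal{F}$ into two cycles each identifiable with a cycle of $\G_P$, finishing with the Cycle-Splitting Lemma in both cases. The only difference is that the ``degenerate case'' you anticipate in part (2) is vacuous: $\G_P$ is bipartite with parts $\facets(P)$ and $\ver(P)$, so the neighbors of $\p$ are exactly the facets in $\overline{\mathcal{F}}$ and no element of $V$ can be a neighbor of $\p$; hence $\vv$ and $\w$ always lie in the reduced graph and your ``generic'' argument covers every type A cycle.
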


\begin{proof}
  We will start by proving (1). Let $C$ be a cycle of type B of the form $\overline{\p} \overline{F} \vv \widehat{F} \widehat{\p} \widehat{G} \w \overline{G} \overline{\p}$. Since $\p$ is not a cut-vertex of $\G$, there is a path $\Gamma$ in $\G_P$ from $\vv$ to $\w$ that does not pass through $\p$. Then $\Gamma$ is also a path in $\G_{P_{\p}}$ that does not pass through $\widehat{\mathcal{F}}$ nor $\overline{\p}$.  Let $C_1$ be the cycle in $\G_{P_{\p}}$ obtained by joining $\Gamma$ to the path $\overline{G} \overline{\p} \overline{F} \vv$ and $C_2$ be the cycle $\vv \widehat{F} \widehat{\p} \widehat{G} \w \Gamma \vv$. Now $C_1$ contains no elements of  $\widehat{\mathcal{F}}$, so $f_{C_1}\in I_{P_{\p}}$ by the same argument as in the base case of the proof of Proposition \ref{prop:vertsplitcycleclasses}.  Also, $C_2$ does not contain $\overline{\p}$ so $f_{C_2} \in  I_{P_{\p}}$  by Corollary \ref{cor:typeA}.  Since $C=C_1+C_2$, $f_C\in  I_{P_{\p}}$ by the Cycle-Splitting Lemma.  

We now prove (2). Suppose we have a cycle of type $A$ of the form  $C=\overline{F} \vv \widehat{F} \w \overline{F} $. By hypothesis the vertices $\vv$ and $\w$ are connected in $\G_P$ by a path $\Gamma$ passing only through $V$ and $\mathcal{F}$. This means that $C_1= \vv \Gamma \w \overline{F} \vv$ is a cycle that does not contain elements in $\widehat{\mathcal{F}}$, hence $f_{C_1} \in I_{P_{\p}}$, and similarly for $C_2=\vv \widehat{F} \w \Gamma \vv$. Since $C=C_1+C_2$, we have $f_C\in I_{P_{\p}}$ by the Cycle-Splitting Lemma.

\end{proof}

This Lemma can be improved by noticing that the conditions for the second statement almost always imply the conditions for the first statement.

\begin{proposition}\label{prop:cycleaux2} \label{prop:connectednesscondition}
Let $P$ be a polytope such that $T_P \subseteq I_P$ and let $\G$ be the connected component of $\G_P$ that contains $\p$. If the graph obtained from $\G$ by removing $\p$ and all its neighbors is connected,  then $T_{P_{\p}}\subseteq I_{P_{\p}}$.
\end{proposition}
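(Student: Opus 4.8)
The plan is to reduce Proposition~\ref{prop:connectednesscondition} to Lemma~\ref{lem:cycleaux} by checking that, apart from a handful of trivial cases, the connectivity hypothesis (``$\G$ minus $\p$ and all its neighbors is connected'') already forces the weaker hypothesis of Lemma~\ref{lem:cycleaux}(1) (``$\G$ minus $\p$ is connected''). If both hold, then Lemma~\ref{lem:cycleaux}(2) gives that all type~A binomials lie in $I_{P_{\p}}$, and Lemma~\ref{lem:cycleaux}(1) then upgrades this to $T_{P_{\p}}\subseteq I_{P_{\p}}$, which is exactly what we want. So the whole argument is a graph-theoretic lemma about the bipartite non-incidence graph $\G_P$.

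First I would fix notation: let $H$ be the graph obtained from $\G$ by deleting $\p$ and its neighbors $N(\p)$ (these are the facets in $\overline{\mathcal F}$, i.e.\ the facets not containing $\p$), and assume $H$ is connected; I want to show $\G - \p$ is connected. The vertices of $\G - \p$ are $V \cup \mathcal F \cup \overline{\mathcal F}$ (using the names from the vertex-split discussion: $V = \ver(P)\setminus\{\p\}$, $\mathcal F$ the facets through $\p$, $\overline{\mathcal F}$ the facets not through $\p$). The graph $H$ has vertex set $V \cup \mathcal F$ and is assumed connected, so it suffices to show that every facet $\overline F \in \overline{\mathcal F}$ is connected to $H$ inside $\G - \p$ — equivalently, that $\overline F$ has a neighbor in $V$. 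But the neighbors of a facet node in the non-incidence graph are exactly the vertices of $P$ \emph{not} lying on that facet; since $P$ is full-dimensional, no facet contains all vertices, so $\overline F$ always has at least one neighbor among $\ver(P)$. The only way this neighbor fails to lie in $V$ is if $\p$ is the \emph{unique} vertex off $\overline F$. I would argue this forces a degenerate situation: if $\overline F$ omits only $\p$, then $\overline F$ contains every vertex of $P$ except $\p$, so $P = \conv(\ver(F') \cup \{\p\})$ for the facet $F' := \overline F$; a short argument (or appeal to the structure already used for vertex sums) shows this means $\p$ together with $F'$ exhibits $P$ as a pyramid-like/vertex-sum configuration, and in any case one checks directly that $P_{\p}$ is again a join/pyramid whose graphicality follows from Theorem~\ref{tj}, or that the hypothesis cannot hold nontrivially. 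Modulo handling that edge case, $H$ connected plus each $\overline F$ attached to $V$ gives $\G - \p$ connected.

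The main obstacle I anticipate is precisely the bookkeeping around that degenerate case — a facet $\overline F$ whose only non-incident vertex is $\p$ — and, relatedly, making sure the hypothesis of Lemma~\ref{lem:cycleaux}(1) is genuinely implied rather than merely ``usually'' implied. One must either show such a facet cannot exist when $P$ is full-dimensional and has $\p$ as a genuine vertex with more structure, or carve it out and treat $P_{\p}$ separately (e.g.\ recognizing it via the join/pyramid analysis of Section~\ref{sec:operations}). A secondary, milder point: I should confirm that ``$\G$ minus $\p$ and its neighbors connected'' in particular implies $H$ is nonempty, so that the pieces being glued are actually there; if $H$ is empty the statement about type~A cycles is vacuous and one reads off the conclusion from the small polytopes directly.

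Once the graph lemma is in place the proof is two lines: apply Lemma~\ref{lem:cycleaux}(2) to get the type~A binomials into $I_{P_{\p}}$, then Lemma~\ref{lem:cycleaux}(1) to conclude $T_{P_{\p}} \subseteq I_{P_{\p}}$. Combined with the moral $2$-levelness lemma (the one showing $I_P \subseteq T_P$ implies $I_{P_{\p}} \subseteq T_{P_{\p}}$), this also yields that $P_{\p}$ is graphic whenever $P$ is graphic and satisfies the connectivity hypothesis, giving the algebraic substitute for McMullen's ``$P$ is not a vertex sum at $\p$'' condition promised in the discussion after Theorem~\ref{thm:main}.
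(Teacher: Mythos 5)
Your proposal follows essentially the same route as the paper: deduce that $\G-\p$ is connected from the stronger hypothesis by attaching each facet of $\overline{\mathcal F}$ to the connected remainder via a neighbor in $V$, isolate the single degenerate case where some facet omits only $\p$ (which forces $P$ to be a pyramid with apex $\p$, handled by the join/pyramid analysis since splitting the apex just yields another pyramid), and then conclude by combining the two parts of Lemma \ref{lem:cycleaux}. The argument is correct; the paper resolves your flagged ``edge case'' exactly as you anticipate.
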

\begin{proof}
Without loss of generality, we can suppose $\G_P=\G$ (that is, $P$ is not the join of two polytopes). First note that since removing $\p$ and its neighbours leaves the graph connected, the only way that removing $\p$ could disconnect $\G$ is if at least one of its neighbours becomes isolated (note that two neighbours can't connect to each other since $\G$ is bipartite). In this case there is a facet that contains every vertex but $\p$, which implies that $P$ is a pyramid with $\p$ as its apex. But vertex splitting the apex of a pyramid coincides with taking another pyramid over it, which preserves the inclusion $T_P \subseteq I_P$ (it is a special case of the join), so we are done with this case. 

On the other hand, if removing $\p$ leaves a connected graph, than we have the result by combining the two parts of Lemma \ref{lem:cycleaux}.
\end{proof}

It remains to see which polytopes do not verify the conditions of the previous Proposition. Suppose $P$ is a $d$-dimensional polytope with $N$ vertices and $\p$ one of its vertices. If removing $\p$ and its neighbors increases the number of connected components of $\G_P$, then the slack matrix of $P$ can be written as
\[
S_P=\kbordermatrix{
& V_1 & \p & V_2\\
\mathcal{F}_1 & A & \mathbf{0} & O\\
\mathcal{F}_2 & O & \mathbf{0} & B\\
\overline{\mathcal{F}} & \overline{A}& \mathbf{1}  & \overline{B}}.\ \]
By Corollary 3 of \cite{GGKPRT13}, a matrix is the slack matrix of a polytope if and only if its rows generate the cone obtained by intersecting its row space with the nonnegative orthant.
In this case we get a simple characterization of the extreme rays of $\Row(S_P) \cap \RR_+^N$ in terms of the matrices
\[
S'=\kbordermatrix{
& V_1 & \p\\
\mathcal{F}_1 & A & \mathbf{0}\\
\overline{\mathcal{F}} & \overline{A}&  \mathbf{1}
} \ \ \text{ and }
S''=\kbordermatrix{
& \p & V_2\\
\mathcal{F}_2 & \mathbf{0} & B\\
\overline{\mathcal{F}} & \mathbf{1} & \overline{B}  
} . \] 
\begin{lemma} Let $n := |V_1|$ and $m := |V_2|$. A vector $(p,r,q) \in \RR^{n+1+m}$ is a generator of $\Row(S_P) \cap \RR_+^{n+1+m}$ if $(p,r)$ and $(r,q)$ are generators of $\Row(S') \cap \RR_+^{n+1}$ and $\Row(S'') \cap \RR_+^{1+m}$ respectively.
\end{lemma}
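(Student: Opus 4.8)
The plan is to exploit the characterization of \cite{GGKPRT13} quoted above in both directions. Since $S_P$ is a slack matrix, its rows generate the cone $K := \Row(S_P)\cap\RR_+^{n+1+m}$, and since $P$ has no redundant facet each of those rows spans an extreme ray; so the generators of $K$ are exactly the positive multiples of the rows of $S_P$. The same applies to $S'$ and $S''$: deleting from $S_P$ the columns indexed by $V_2$ and then the newly-zero rows indexed by $\mathcal F_2$ leaves exactly $S'$, and one checks that $S'$ is, up to repetition or omission of rows, the slack matrix of the face $G_1:=\bigcap_{F\in\mathcal F_2}F$ of $P$ (whose vertex set is $V_1\cup\{\p\}$); dually $S''$ is the slack matrix of $G_2:=\bigcap_{F\in\mathcal F_1}F$. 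Hence the generators of $K':=\Row(S')\cap\RR_+^{n+1}$ and $K'':=\Row(S'')\cap\RR_+^{1+m}$ are the positive multiples of the rows of $S'$ and of $S''$, and it is enough to prove: if $(p,r)$ is a positive multiple of a row of $S'$ and $(r,q)$ a positive multiple of a row of $S''$ (with the same middle coordinate $r$), then $(p,r,q)$ is a positive multiple of a row of $S_P$.

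I would then split on whether $r=0$. Suppose first $r\neq0$ and rescale so that $r=1$. The only rows of $S'$ with $\p$-coordinate $1$ are the $(V_1,\p)$-restrictions of rows of $S_P$ indexed by $\overline{\mathcal F}$, so $(p,1)$ equals such a restriction for some facet $F_i\in\overline{\mathcal F}$, i.e.\ $p=\overline a_i$ in the block notation; symmetrically $q=\overline b_j$ for some $F_j\in\overline{\mathcal F}$. Writing $Z_1:=\{v\in V_1:p_v=0\}=V_1\cap F_i$ and $Z_2:=\{v\in V_2:q_v=0\}=V_2\cap F_j$, the whole matter comes down to showing that $F_i$ and $F_j$ may be chosen equal, equivalently that $Z_1\cup Z_2$ is the vertex set of a facet of $P$; granting this, $(p,1,q)$ is literally that row of $S_P$. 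If instead $r=0$, then $(p,0)$ and $(0,q)$ are positive multiples of rows of $S'$ and $S''$ indexed by $\mathcal F_1$ and $\mathcal F_2$ respectively; from the block form one argues that $p$ and $q$ cannot both be nonzero (a nonnegative combination of rows of $S_P$ with vanishing $\p$-coordinate supported on both the $V_1$- and the $V_2$-block is never extreme), so $(p,0,q)$ collapses to a single row of $S_P$.

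I expect the decisive step to be the combinatorial claim in the case $r\neq0$: that the facet $\conv(Z_1)$ of $G_1$ and the facet $\conv(Z_2)$ of $G_2$ glue to a facet of $P$ with vertex set exactly $Z_1\cup Z_2$. The natural route is to translate ``$(p,r)$ generates $K'$'' into ``$\conv(Z_1)$ is a facet of $G_1$'' (hence a face of $P$), likewise for $Z_2$, and then to identify the smallest face of $P$ containing $\conv(Z_1)\cup\conv(Z_2)$, using the structural input that every facet in $\mathcal F_1$ contains all of $V_2\cup\{\p\}$ and every facet in $\mathcal F_2$ contains all of $V_1\cup\{\p\}$ --- which is exactly what ``removing $\p$ and its neighbours disconnects $\G_P$'' means --- together with the facts that $G_1$ and $G_2$ meet in a common face through $\p$ and jointly affinely span $P$. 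Keeping careful track of the vertex sets of these various faces is where the real content of the proof lies.
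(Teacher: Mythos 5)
Your route is genuinely different from the paper's, and as it stands it has two concrete gaps. The paper does not identify generators with rows at this stage at all; instead it (i) uses a rank count to establish the decomposition $\Row(S_P)=(\Row(A),\mathbf{0},O)+\RR(a,1,b)+(O,\mathbf{0},\Row(B))$, (ii) observes that the two coordinate projections map $\Row(S_P)\cap\RR_+^{n+1+m}$ onto $\Row(S')\cap\RR_+^{n+1}$ and $\Row(S'')\cap\RR_+^{1+m}$, and (iii) finishes by soft cone geometry: the preimage of an extreme ray under a linear projection is a face, the intersection of the two preimage faces is the ray through $(p,r,q)$, and a face that is a single ray is extreme. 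Your first gap is exactly the step you yourself flag as ``the real content'': that $Z_1\cup Z_2$ is the vertex set of a facet of $P$ whose slack row is $(p,1,q)$. This claim is essentially equivalent to the vertex-sum decomposition $P=Q\oplus_{\p}R$ that the lemma exists to establish, so your argument is close to circular. Concretely, without the row-space decomposition in (i) you cannot even verify that $(p,1,q)$ lies in $\Row(S_P)$ --- for that you need the difference of the two relevant rows of $\overline{B}$ to lie in $\Row(B)$ --- let alone exhibit a facet of $P$ realizing it. The facts you invoke (``$G_1$ and $G_2$ meet in a common face through $\p$ and jointly affinely span $P$'') are not enough: you would additionally need $\aff(G_1)\cap\aff(G_2)=\{\p\}$ and $\dim G_1+\dim G_2=\dim P$, and that is precisely what the paper's rank computation supplies.

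Second, your treatment of the case $r=0$ is incorrect. The hypotheses of the lemma are satisfiable with $p$ and $q$ both nonzero: take $p$ a row of $A$ and $q$ a row of $B$; then $(p,0)$ and $(0,q)$ are extreme rays of their respective cones (they are facet rows lying in the face cut out by vanishing of the $\p$-coordinate). So your claim that ``$p$ and $q$ cannot both be nonzero'' fails, and in that situation $(p,0,q)=(p,0,0)+(0,0,q)$ with both summands in $\Row(S_P)\cap\RR_+^{n+1+m}$, so the vector is genuinely not an extreme ray. You have in fact put your finger on a point where the statement must be read with care: the paper's own assertion that ``the intersection of the inverse images is the ray generated by $(p,r,q)$'' also only holds when $r\neq 0$ (for $r=0$ and $p,q\neq 0$ that intersection is the two-dimensional face spanned by $(p,0,0)$ and $(0,0,q)$), and the $r=0$ generators have to be handled by a separate, easier argument. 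But declaring that one of $p$, $q$ must vanish is not a valid resolution.
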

\begin{proof}
Since $\dim \text{Row}(S_P) = \dim \text{Col}(S_P)$, $\text{rank} [\overline{A}\ \mathbf{1}\ \overline{B}] = 1$. Thus $\text{Row}(S_P)=(\text{Row}(A),\mathbf{0},O)+\RR(a,1,b)+(O,\mathbf{0},\text{Row}(B))\subseteq \RR^{n+1+m}$, where $[a\ 1\ b]$ is the first row indexed by $\overline{\mathcal{F}}$ in $S_P$. Note that the projection of $\text{Row}(S_P) \cap \RR_{+}^{n+1+m}$ into the first $n+1$ coordinates is simply $\text{Row}(S') \cap \RR_+^{n+1}$, while the projection into the last $1+m$ is $\text{Row}(S'') \cap \RR_+^{1+m}$. The inverse image of an extreme ray by a linear projection is a face, and the intersection of faces is still a face. Since the intersection of the inverse images, by each of the projections, of the rays generated by $(p,r)$ and $(r,q)$ is the ray generated by $(p,r,q)$, then it must be a face, hence extreme, whenever $(p,r)$ and $(r,q)$ are.
\end{proof}

This means that every generator of the cone $\row(S') \cap \RR^{n+1}_+$ appears as a row in $S'$, and similarly for $S''$. Then by removing redundant and repeated rows from each of $S'$ and $S''$, we obtain the slack matrices of some polytopes $Q$ and $R$. Now the slack matrix of $Q\oplus_{\p} R$ is a submatrix of $S_P$ whose row space equals the row space of $S_P$. But $S_P$, being the slack matrix of $P$, has no redundant rows. So in fact this submatrix is all of $S_P$. That is, $P=Q\oplus_{\p} R$.  

This allows us to extract a purely geometrical sufficient condition for vertex splitting to preserve graphicality, and it once more matches McMullen's condition.
\begin{theorem}
If $P$ satisfies $T_P \subseteq I_P$ and is not the vertex sum of two polytopes at $\p$, then the vertex splitting of $P$ at $\p$ verifies $T_{P_{\p}}\subseteq I_{P_{\p}}$. In particular, if $P$ is graphic and not the vertex sum of two polytopes at $\p$, then $P_{\p}$ is graphic.
\end{theorem}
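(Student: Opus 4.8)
The plan is to deduce this theorem from Proposition~\ref{prop:cycleaux2} together with the structural discussion that immediately precedes it, where the substantive work has already been done. First I would dispose of the ``in particular'' clause, since it is a formal consequence of the main statement: if $P$ is graphic then $I_P=T_P$, so in particular $I_P\subseteq T_P$, and the lemma just before Proposition~\ref{prop:vertsplitcycleclasses} gives $I_{P_{\p}}\subseteq T_{P_{\p}}$; combining this with the inclusion $T_{P_{\p}}\subseteq I_{P_{\p}}$ supplied by the main statement yields $I_{P_{\p}}=T_{P_{\p}}$, that is, $P_{\p}$ is graphic. So it remains to prove: if $T_P\subseteq I_P$ and $P$ is not a vertex sum of two polytopes at $\p$, then $T_{P_{\p}}\subseteq I_{P_{\p}}$.

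For this I would invoke Proposition~\ref{prop:cycleaux2}, whose only hypothesis beyond $T_P\subseteq I_P$ is that the graph obtained from $\G$ --- the connected component of $\G_P$ containing $\p$ --- by deleting $\p$ and all of its neighbours is connected. The key observation is that deleting $\p$ and its neighbours removes nodes only from the component $\G$ and leaves every other component of $\G_P$ untouched, so that if $\G$ minus $\p$ and its neighbours were to break into two or more pieces, the total number of connected components of $\G_P$ would strictly increase. But by the discussion preceding the theorem --- the block decomposition of $S_P$ induced by such a splitting, together with the description of the extreme rays of $\Row(S_P)\cap\RR_+^N$ via the smaller matrices $S'$ and $S''$ --- any such increase forces a decomposition $P=Q\oplus_{\p} R$, contradicting our hypothesis. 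Hence $\G$ minus $\p$ and its neighbours is connected (or empty), and Proposition~\ref{prop:cycleaux2} yields $T_{P_{\p}}\subseteq I_{P_{\p}}$.

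The one loose end is the degenerate possibility that deleting $\p$ and its neighbours empties $\G$; this lowers rather than raises the component count, so the structural dichotomy says nothing. I would handle it by noting that an empty $\G$ after the deletion forces the node set of $\G$ to be exactly $\{\p\}$ together with the neighbours of $\p$, whence each facet not containing $\p$ has vertex set $\ver(P)\setminus\{\p\}$; since distinct facets have distinct vertex sets there is only one, so $\p$ has degree one in $\G_P$ and $P$ is the pyramid over that facet $F$. Then $P_{\p}$ is the join of a segment with $F$, hence graphic exactly when $F$ is graphic, and $F$ is graphic because $P=\pyr(F)$ is and a pyramid has the same slack ideal as its base; alternatively, one applies Theorem~\ref{tj} directly. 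I do not anticipate any essential obstacle: once one accepts the extreme-ray analysis preceding the theorem --- the real content, already carried out in the text --- what remains is the component-counting bookkeeping and this pyramid degeneracy.
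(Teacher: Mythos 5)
Your proposal is correct and follows essentially the same route as the paper: the theorem is obtained by combining Proposition~\ref{prop:cycleaux2} with the extreme-ray analysis preceding the statement (disconnection after deleting $\p$ and its neighbours forces $P=Q\oplus_{\p}R$), and the ``in particular'' clause follows from the unnumbered lemma giving $I_{P}\subseteq T_{P}\Rightarrow I_{P_{\p}}\subseteq T_{P_{\p}}$. The pyramid degeneracy you flag is the same one the paper disposes of (inside the proof of Proposition~\ref{prop:cycleaux2}) by observing that splitting the apex of a pyramid is again a join, so your treatment matches the paper's.
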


  \section{Operations on Finite Posets and their Order Polytopes} \label{sec:posets}

In this section we turn our attention to order polytopes of posets. It turns out that several simple operations on posets correspond precisely to applying the operations introduced in Section \ref{sec:orderpoly} to their order polytopes. This opens the possibility of applying the results developed in the previous section to the question of graphicality of order polytopes, a task that we will undertake in Section \ref{sec:conclusions}. 

We start by defining three simple operations that depend only on the posets involved, without any additional choices.

\begin{definition}   \label{def:orderpolymain}
Let $\Pscr$ and $\Qscr$ be posets on the disjoint sets $X$ and $Y$ respectively. We then define the following new posets.
\begin{enumerate}
\item $\Pscr_{\rev}$, the \emph{reverse} of $\Pscr$, is the poset on $X$ obtained by reversing the order of $\Pscr$.
\item $\Pscr\vee \Qscr$, the \emph{join} of $\Pscr$ and $\Qscr$, is the poset defined by taking all the elements and relations of $\Pscr$ and $\Qscr$, and adding a new element, $\ast$, that is greater than each element in $X$ and less than each element in $Y$.
\item $\Pscr\oplus \Qscr$, the \emph{ordinal sum} of $\Pscr$ and $\Qscr$, is the poset defined by taking all the elements and relations of $\Pscr$ and $\Qscr$ and imposing that each element of $X$ is less than every element of $Y$.
\item $\Pscr + \Qscr$, the \emph{direct sum} of $\Pscr$ and $\Qscr$, is the poset defined by simply taking the union of all the elements and relations of $\Pscr$ and $\Qscr$, with no additional relations.
\end{enumerate}
\end{definition}

\begin{example}\label{ex:posetop1}
Consider the posets $\Pscr$ and $\Qscr$ given by the following Hasse diagrams.
\[\begin{tikzpicture}
\filldraw (0,0) node[above] {2} circle (2pt);
\filldraw (0,-1) node[below] {1} circle (2pt);
\draw (-0,-1) -- (0,0);
\draw (0, -2) node {$\Pscr$};
\filldraw (2.5,0) node[above] {C} circle (2pt);
\filldraw (2,-1) node[below] {A} circle (2pt);
\filldraw (3, -1) node[below] {B} circle (2pt);
\draw (2,-1) -- (2.5,0) -- (3, -1);
\draw (2.5, -2) node {$\Qscr$};
\end{tikzpicture}\]
We illustrate the operations defined above applied to $\Pscr$ and $\Qscr$. 
\[\begin{tikzpicture}
\filldraw (0,0) node[above] {1} circle (2pt);
\filldraw (0,-1) node[below] {2} circle (2pt);
\draw (-0,-1) -- (0,0);
\draw (0, -2) node {$\Pscr_{\rev}$};
\filldraw (2.5,-1) node[below] {C} circle (2pt);
\filldraw (2,0) node[above] {A} circle (2pt);
\filldraw (3, 0) node[above] {B} circle (2pt);
\draw (2,0) -- (2.5,-1) -- (3,0);
\draw (2.5, -2) node {$\Qscr_{\rev}$};
\begin{scope}[xshift=5cm]
\filldraw (0,0) node[above] {C} circle (2pt);
\filldraw (-0.5,-1) node[left] {A} circle (2pt);
\filldraw (0.5, -1) node[right] {B} circle (2pt);
\draw (-0.5,-1) -- (0,0) -- (0.5, -1);
\filldraw (0,-3) node[left] {2} circle (2pt);
\filldraw (0,-4) node[below] {1} circle (2pt);
\draw (0,-3) -- (0,-4);
\filldraw (0,-2) node[right] {$\ast$} circle (2pt);
\draw (-0.5,-1) -- (0,-2) -- (0.5, -1);
\draw (0,-3) -- (0,-2);
\draw (0, -5) node {$\Pscr\vee \Qscr$};
\end{scope}
\begin{scope}[xshift=8cm]
\filldraw (0,0) node[above] {C} circle (2pt);
\filldraw (-0.5,-1) node[left] {A} circle (2pt);
\filldraw (0.5, -1) node[right] {B} circle (2pt);
\draw (-0.5,-1) -- (0,0) -- (0.5, -1);
\filldraw (0,-2) node[left] {2} circle (2pt);
\filldraw (0,-3) node[below] {1} circle (2pt);
\draw (0,-2) -- (0,-3);
\draw (-0.5,-1) -- (0,-2) -- (0.5, -1);
\draw (0, -4) node {$\Pscr\oplus \Qscr$};
\end{scope}
\begin{scope}[xshift=10cm]
\filldraw (0,0) node[above] {2} circle (2pt);
\filldraw (0,-1) node[below] {1} circle (2pt);
\draw (-0,-1) -- (0,0);
\draw (0.75, -2) node {$\Pscr + \Qscr$};
\filldraw (1,0) node[above] {C} circle (2pt);
\filldraw (0.5,-1) node[below] {A} circle (2pt);
\filldraw (1.5, -1) node[below] {B} circle (2pt);
\draw (0.5,-1) -- (1,0) -- (1.5,-1);
\end{scope}
\end{tikzpicture}\]
\end{example}

We will now see that all these operations on posets induce simple operations on their order polytopes.

\begin{proposition} \label{prop:orderpolymain}
Let $\Pscr$ and $\Qscr$ be finite posets on disjoint ground sets $X$ and $Y$. Then we have the following relations between order polytopes.
\begin{enumerate}
\item $\Ord(\Pscr)$ is affinely equivalent to $\Ord(\mathscr{P_{\rev}})$;
\item $\Ord(\Pscr\vee\Qscr)$ is combinatorially equivalent to $\Ord(\Pscr)\vee\Ord(\Qscr)$;
\item  $\Ord(\Pscr\oplus\Qscr)$ is combinatorially equivalent to ${\Ord(\Pscr)}\oplus_{(\vv,\w)}{\Ord(\Qscr)}$, where $\vv$ is the vertex in $\Ord(\Pscr)$ given by the empty filter and $\w$ is the vertex in $\Ord(\Qscr)$ given by the complete filter. 
\item $\Ord(\Pscr+\Qscr)=\Ord(\Pscr) \times \Ord(\Qscr)$.
\end{enumerate}
\end{proposition}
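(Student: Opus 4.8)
The plan is to prove the four statements one at a time, each by a direct computation at the level of defining inequalities for order polytopes. For part (1), I would write out the linear map $\psi: \RR^X \to \RR^X$ given by $t_i \mapsto 1 - t_i$ and check that it sends the facet-defining inequalities of $\Ord(\Pscr)$ to those of $\Ord(\Pscr_{\rev})$. Specifically: $0 \le t_i$ becomes $t_i \le 1$, the constraint $t_i \le 1$ becomes $0 \le t_i$, and $t_i \le t_j$ for $i \preceq j$ becomes $1-t_i \ge 1-t_j$, i.e. the new coordinate at $j$ is $\le$ the new coordinate at $i$, matching the reversed order. Since $\psi$ is an affine involution, it is an affine equivalence; this settles (1). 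For part (4), I would simply observe that the system of inequalities defining $\Ord(\Pscr + \Qscr)$ splits as the union of the system for $\Ord(\Pscr)$ in the $X$-coordinates and the system for $\Ord(\Qscr)$ in the $Y$-coordinates, with no relations linking the two blocks (since $\Pscr + \Qscr$ adds no new comparabilities). Hence the solution set is literally the Cartesian product $\Ord(\Pscr) \times \Ord(\Qscr)$, giving equality on the nose, not merely combinatorial equivalence.

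Parts (2) and (3) are the substantive ones, and I expect (3) to be the main obstacle since it involves identifying a specific vertex sum rather than an abstract join. For (2), I would use the description of vertices of order polytopes via filters: a filter $J$ of $\Pscr \vee \Qscr$ either avoids the new element $\ast$ entirely (in which case, since $\ast$ sits above all of $X$, the filter cannot contain any element of $X$ and is just a filter of $\Qscr$) or contains $\ast$ (in which case it must contain all of $Y$, and its restriction to $X \cup \{\ast\}$ is determined by an arbitrary filter of $\Pscr$ together with $\ast$). This gives a bijection between vertices of $\Ord(\Pscr \vee \Qscr)$ and the union of vertices of $\Ord(\Pscr)$ and vertices of $\Ord(\Qscr)$, matching the vertex set of a join recorded in the Observation after Definition \ref{j}. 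I would then verify the facets match: facets from minimal elements of $\Pscr \vee \Qscr$ are exactly the minimal-element facets of $\Pscr$; facets from maximal elements are those of $\Qscr$; the new cover relations $x \cover \ast$ (for $x$ maximal in $\Pscr$) and $\ast \cover y$ (for $y$ minimal in $\Qscr$) together with the old covers and the facet $t_\ast \le 1$, $t_\ast \ge 0$ account for the facets of shape $F \vee \Ord(\Qscr)$ and $\Ord(\Pscr) \vee G$. The cleanest route is to check that the incidences between these vertices and facets agree with the combinatorial structure of the join, so that the face lattices are isomorphic; one may also invoke a general fact that $\Ord$ of an ordinal-type construction behaves predictably, but I would rather do the incidence check explicitly.

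For part (3), the key point is to exhibit the single common vertex. In $\Ord(\Pscr \oplus \Qscr)$, I would again classify filters $J$: since every element of $X$ lies below every element of $Y$, if $J$ meets $X$ then $J \supseteq Y$, and if $J$ misses some element of $Y$ then $J \cap X = \emptyset$. Thus filters of $\Pscr \oplus \Qscr$ correspond to (filters of $\Pscr$, with $Y$ adjoined) together with (filters of $\Qscr$ contained in $Y$), and these two families overlap in exactly one filter: the set $Y$ itself, which is both the image of the empty filter of $\Pscr$ (adjoin $Y$) and a filter of $\Qscr$ (the full filter). Translating to characteristic vectors, $\chi_Y \in \Ord(\Pscr \oplus \Qscr)$ corresponds on the $X$-side to $\chi_\emptyset = \vv \in \Ord(\Pscr)$ and on the $Y$-side to $\chi_Y = \w \in \Ord(\Qscr)$, so this is precisely the identified vertex pair $(\vv,\w)$ in the vertex sum. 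I would then check that the remaining vertices and the facets line up with the vertex-sum structure from the Observation after Definition \ref{j}: facets from minimal elements of $\Pscr \oplus \Qscr$ are the minimal-element facets of $\Pscr$ (the minimal elements of $\Qscr$ are no longer minimal in the sum); facets from maximal elements are those of $\Qscr$; cover relations are those of $\Pscr$, those of $\Qscr$, plus new covers $x \cover y$ for $x$ maximal in $\Pscr$ and $y$ minimal in $\Qscr$, and the latter produce the facets of type $F \vee G$. Matching these incidences to the three families in the vertex-sum facet list, and confirming the identified vertex $\p = (\vv,\w)$ behaves as required, completes the combinatorial equivalence. The main obstacle throughout (2) and (3) is bookkeeping: making sure every facet and vertex of the order polytope is accounted for and its incidences checked, rather than any conceptual difficulty.
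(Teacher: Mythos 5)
Your plan is correct and follows essentially the same route as the paper's proof: part (1) via the affine involution $\x \mapsto \mathbf{1}-\x$, parts (2) and (3) via the classification of filters of the new poset and the matching of facets and vertex--facet incidences (in particular identifying the common vertex of the vertex sum with the filter $Y$), and part (4) via the product decomposition. The only quibble is that in (2) the facets $t_\ast \le 1$ and $t_\ast \ge 0$ exist only when $\Qscr$ or $\Pscr$ is empty, so they should not appear in the generic facet matching; this does not affect the argument.
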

\begin{proof}

(1)  The affine function $\phi:\mathbb{R}^d\rightarrow\mathbb{R}^d$ given by  $\phi(\x)=1-\x$ is an affine isomorphism between $\Ord(\Pscr)$ and $\Ord(\mathscr{P_{\rev}})$.

(2) Note that we can naturally identify the facets of $\Ord(\Pscr)$ and $\Ord(\Qscr)$ with those of $\Ord(\Pscr\vee\Qscr)$. Facets coming from cover relations, minimal elements of $\Pscr$ and maximal elements of $\Qscr$ are still present in $\Ord(\Pscr\vee\Qscr)$. Those coming from maximal elements of $\Pscr$ can be identified with inequalities given by the cover relations of $\ast$, and similarly with the minimal elements of $\Qscr$.

As for the vertices, note that if any filter of $\Pscr\vee\Qscr$ contains an element of $X$, then it also contains $\ast$ and every element of $Y$. So any filter either does not contain $\ast$ nor any element of $X$, or it contains $\ast$ and every element of $Y$. We can identify the filters of $\Pscr$ with those of $\Pscr\vee\Qscr$ that contain $\ast$ by adding $\ast$ and all elements of $Y$, while the filters of $\Qscr$ identify directly with filters of $\Pscr\vee\Qscr$.

We thus have an identification between the facets of $\Ord(\Pscr\vee\Qscr)$ and the union of those of $\Ord(\Pscr)$ and $\Ord(\Qscr)$ and similarly for vertices. It is now easy to check that under that identification, every vertex of  $\Ord(\Qscr)$ belongs to every facet of $\Ord(\Pscr)$ and vice-versa. Moreover, the identifications respect vertex-facets incidences inside each of the posets. This implies that $\Ord(\Pscr\vee\Qscr)$ and $\Ord(\Pscr)\vee\Ord(\Qscr)$ have the same vertex-facet incidences, hence are combinatorially equivalent.

(3)
We will proceed as in (2). Again, note that facets coming from cover relations, minimal elements of $\Pscr$ and maximal elements of $\Qscr$ are still present in $\Ord(\Pscr\oplus\Qscr)$. Furthermore, one can map filters of $\Pscr$ to filters of $\Pscr\oplus\Qscr$ by adding all elements of $Y$, while filters of $\Qscr$ again directly yield filters of $\Pscr\oplus\Qscr$. However this will map two filters to the same: the empty filter of $\Pscr$ and the complete filter $Y$ of $\Qscr$. Moreover, facets  of $\Ord(\Pscr)$ and $\Ord(\Qscr)$ that do not contain these filters are precisely those that correspond to maximal elements of  $\Pscr$ and minimal elements of $\Qscr$, respectively, and are not present in $\Ord(\Pscr\oplus\Qscr)$. Instead for any pair of such facets, we have a single new facet given by the cover relation between the maximal element of $\Pscr$ and the minimal element of $\Qscr$ that was introduced by $\oplus$. This gives us again a one to one identification between facets and vertices of $\Ord(\Pscr\oplus\Qscr)$ and those of ${\Ord(\Pscr)}\oplus_{(\vv,\w)}{\Ord(\Qscr)}$, and it is once again easy to see that the incidence relations are preserved. Thus $\Ord(\Pscr\oplus\Qscr)$ and ${\Ord(\Pscr)}\oplus_{(\vv,\w)}{\Ord(\Qscr)}$ are combinatorially equivalent.

(4) We need only to observe that a set of elements of $X \cup Y$ is a filter on $\Pscr+\Qscr$ if and only if its restrictions to $X$ and $Y$ are filters of $\Pscr$ and $\Qscr$ respectively. This means that the set of vertices of $\Ord(\Pscr\vee\Qscr)$ is the set of all pairs $(\vv,\w)$ where $\vv$ and $\w$ are vertices of $\Ord(\Pscr)$ and $\Ord(\Qscr)$ respectively, proving the result.
\end{proof}

As a particular case of (2), note that given a poset $\Pscr$, the poset $\Pscr^{\Delta}$ obtained from $\Pscr$ by adjoining a new universal maximum is simply the join of $\Pscr$ with the empty poset, which implies that $\Ord(\Pscr^{\Delta})$ is simply the join of $\Ord(\Pscr)$ with a point, i.e., a pyramid over $\Ord(\Pscr)$.

We now introduce two other operations on posets that depend on more than just their ground sets.

\begin{definition}
  Let $\Pscr$ be a poset.
\begin{enumerate}
\item For  $a\cover b$ in $\Pscr$ we define  $\Pscr_{a\cover b}$ by adding a new element $\ast$ and replacing $a\cover b$ by $a \cover \ast \cover b$. We say that
$\Pscr_{a\cover b}$ is obtained from $\Pscr$ by \emph{splitting the cover} $a\cover b$.
\item For $a$ a maximal (minimal) element of  $\Pscr$ we define  $\Pscr_{a}$ by adding a new element $\ast$ and the cover $a \cover \ast$ (respectively $\ast \cover a$).
We say that
$\Pscr_{a}$ is obtained from $\Pscr$ by \emph{splitting the maximal (minimal) element} $a$.
\end{enumerate}
\end{definition}

\begin{example}
Let $Q$ be the poset of Example \ref{ex:posetop1}. If we split the cover $A\cover C$ or the maximal element $C$ we obtain the following posets. 
\[\begin{tikzpicture}
\begin{scope}[xshift=-2cm]
\filldraw (2.5,0) node[above] {C} circle (2pt);
\filldraw (2.25,-0.5) node[left] {$\ast$} circle (2pt);
\filldraw (2,-1) node[below] {A} circle (2pt);
\filldraw (3, -1) node[below] {B} circle (2pt);
\draw (2,-1) -- (2.5,0) -- (3, -1);
\draw (2.5, -2) node {$\Qscr_{A\cover C}$};
\end{scope}
\begin{scope}[xshift=2cm]
\filldraw (2.5,0) node[above] {$\ast$} circle (2pt);
\filldraw (2.5,-1) node[left] {C} circle (2pt);
\filldraw (2,-2) node[below] {A} circle (2pt);
\filldraw (3, -2) node[below] {B} circle (2pt);
\draw (2,-2) -- (2.5,-1) -- (3, -2);
\draw (2.5,-1) -- (2.5,0);
\draw (2.5, -3) node {$\Qscr_C$};
\end{scope}
\end{tikzpicture}\]
\end{example}

Since covers and maximal/minimal elements determine the facets of the order polytope, it is not surprising that the operations of splitting in finite posets are related to the operation of facet wedging on polytopes.

\begin{proposition}\label{prop:splitting}
Let $\Pscr$ be a poset, let $a\cover b$ be a cover relation in $\Pscr$ and $c$ be a minimal or maximal element. Then
\begin{enumerate}
\item $\Ord(\Pscr_{a\cover b})$ is the facet wedge of $\Ord(\Pscr)$ at $F:t_a \leq t_b$;
\item $\Ord(\Pscr_{c})$ is the facet wedge of $\Ord(\Pscr)$ with respect to the facet cut by $0 \leq t_c$ or $t_c \leq 1$, depending on whether $c$ is minimal or maximal.
\end{enumerate}
 \end{proposition}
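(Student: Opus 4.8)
The plan is to verify directly that the two polytopes have the same combinatorial structure by matching vertices and facets and checking incidences, exactly as was done for Proposition~\ref{prop:orderpolymain} but now working with the facet-wedge description (Definition~\ref{fw}) instead of the vertex-sum description. Throughout I would use the fact that $\Ord(\Pscr)$ and $\Ord(\Pscr^\ast)$ are dual, so that the facet wedge of $\Ord(\Pscr)$ along $F$ is, by definition, the dual of the vertex split of $\Ord(\Pscr)^\ast$ at the vertex dual to $F$; in fact it is cleaner to argue directly with the explicit vertex description in Definition~\ref{fw} and the Stanley description of the vertices and facets of order polytopes. Since both cases are handled identically, I would do (1) carefully and indicate that (2) is the same computation with $F: t_c \geq 0$ (resp.\ $F: t_c \leq 1$) in place of $F: t_a \leq t_b$.

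First I would set up the correspondence of facets. Writing $\Pscr_{a \cover b}$ for the poset obtained by replacing $a \cover b$ with $a \cover \ast \cover b$, I would observe that the cover relations of $\Pscr_{a \cover b}$ are exactly those of $\Pscr$ except that $a \cover b$ is removed and $a \cover \ast$, $\ast \cover b$ are added; the minimal and maximal elements are unchanged (here using that $a$ is not minimal and $b$ is not maximal, which holds since $a \cover b$). Thus the facets of $\Ord(\Pscr_{a \cover b})$ are: the facet $t_a \leq t_\ast$, the facet $t_\ast \leq t_b$, and one facet for each facet of $\Ord(\Pscr)$ other than $F: t_a \leq t_b$. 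On the facet-wedge side, Definition~\ref{fw} together with the Observation describing the facets of a facet wedge (dually, of a vertex split) gives: for each facet $G \neq F$ of $\Ord(\Pscr)$ two "lifted" facets or one, according to whether $G \supseteq F$ fails or not in the relevant incidence pattern, plus the two new facets coming from the split; I would match the facet $t_a \leq t_\ast$ with one of the new facets and $t_\ast \leq t_b$ with the other, and match the facets from covers/extremal elements of $\Pscr \setminus \{a \cover b\}$ with the corresponding facets of the wedge, being careful to track which facets of $\Ord(\Pscr)$ contain $F$.

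Next I would set up the correspondence of vertices. A filter of $\Pscr_{a \cover b}$ either contains $b$ (hence may or may not contain $a$ and $\ast$, but if it contains $a$ it contains $\ast$, and if it contains $\ast$ it contains $b$), or does not contain $b$ (hence does not contain $a$ or $\ast$ either if it is to be a filter... ) — more precisely, a filter $J$ of $\Pscr_{a \cover b}$ restricts to a filter of $\Pscr$ on $[d]$, and conversely each filter $J'$ of $\Pscr$ lifts to either one or two filters of $\Pscr_{a \cover b}$ depending on whether $b \in J'$: if $b \notin J'$ then necessarily $\ast \notin J$, giving one filter; if $b \in J'$ then $\ast$ may be included or not, giving two. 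Translating to characteristic vectors, the filters with $b \notin J'$ give vertices "at level $0$" in the new coordinate, and the filters with $b \in J'$ split into a "level $0$" and a "level $1$" copy. This is exactly the vertex pattern of the facet wedge in Definition~\ref{fw}: vertices of $F$ (those filters with $b \in J'$ and $\ast \in J$ — wait, I should instead match: filters not containing $a$, equivalently vertices of $\Ord(\Pscr)$ lying on $F: t_a \leq t_b$ with $t_a < t_b$, i.e. $a\notin J', b\in J'$, versus $t_a=t_b$) appear once while vertices off $F$ appear twice. I would pin down the dictionary $\ver(F) = \{\chi_{J'} : a, b \text{ not separated by } J'\}$ and check it matches.

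Finally I would check that the bijections on vertices and facets preserve incidences, which reduces to a short case analysis: a lifted vertex lies on a lifted facet iff the underlying filter lies on the underlying facet of $\Ord(\Pscr)$ (when both are "off $F$" copies at the same level) or an easy condition on the new coordinate, and the two new facets $t_a \leq t_\ast$, $t_\ast \leq t_b$ contain exactly the vertices one expects. By the standard fact that a polytope is determined up to combinatorial equivalence by its vertex–facet incidence matrix, this finishes the proof. The main obstacle I anticipate is purely bookkeeping: correctly identifying, on the facet-wedge side, which facets of $\Ord(\Pscr)$ contain the wedged facet $F$ (equivalently, which are "doubled" and which are "kept once plus a vertex added") and matching these against the covers and extremal elements of $\Pscr$ surviving in $\Pscr_{a \cover b}$ — in particular handling the facets $t_a \leq t_c$ for $c \cover a$ and $t_c \leq t_b$ for $b \cover c$, whose incidence with $F$ is the delicate point. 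Everything else is routine once the dictionary between filters of $\Pscr_{a\cover b}$ and lifted filters of $\Pscr$ is fixed.
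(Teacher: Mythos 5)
Your proposal follows essentially the same route as the paper: identify the facets of $\Ord(\Pscr_{a\cover b})$ as the old facets other than $F$ plus the two new ones $t_a\le t_\ast$ and $t_\ast\le t_b$, identify vertices by lifting filters (one lift when the vertex lies on $F$, two when $a\notin J'$ and $b\in J'$), and check that the incidences are those of the facet wedge --- which the paper does compactly by writing down the slack matrix of both sides. When writing it up, just use the corrected doubling criterion ($a\notin J'$ \emph{and} $b\in J'$, not merely $b\in J'$, since a filter containing $a$ forces $\ast$ to be included) and note that in a facet wedge each old facet $G\neq F$ yields exactly one facet of the wedge (none are doubled, only the two new facets are added); you essentially caught both points in your parentheticals.
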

\begin{proof}
We will prove only (1), since the proof of (2) is completely analogous. Note that splitting a cover relation replaces one facet given by $F:t_a \leq t_b$ by two new ones given by $\hat{F}:t_a \leq t_\ast$ and $\tilde{F}:t_\ast \leq t_b$. In terms of vertices,  if $b$ is not in a filter, then neither is $\ast$, while if $a$ is in the filter then so is $\ast$. This implies that there is a bijection between vertices of
$\Ord(\Pscr)$ that are in the facet given by $t_a\leq t_b$ and vertices $\vv$ in $\Ord(\Pscr_{a\cover b})$ that satisfy $v_a = v_\ast =v_b$. However, if $a$ is not in the filter and $b$ is, then adding $\ast$ to the filter maintains it as a filter. So each vertex $\vv$ in $\Ord(\Pscr)$ with $v_a=0$ and $v_b=1$ corresponds to two vertices in $\Ord(\Pscr_{a\cover b})$: the first vertex $\hat{\vv}$ is obtained by adding $v_\ast=0$ and the second one $\tilde{\vv}$ is obtained by adding $v_\ast=1$. If the slack matrix of $\Ord(\Pscr)$ is of the form below on the left, then that of $\Ord(\Pscr_{a\cover b})$ will be as below on the right.
\[\kbordermatrix{
& V &  W\\
 & A & B \\
F & \mathbf{1}^T & \mathbf{0}^T
}, \hspace{3cm} \kbordermatrix{
& \hat{V} & \tilde{V} & W\\
 & A & A & B \\
\hat{F}  & \mathbf{0}^T & \mathbf{1}^T & \mathbf{0}^T\\
\tilde{F} & \mathbf{1}^T & \mathbf{0}^T & \mathbf{0}^T
}.\]
The second matrix is precisely the slack matrix of the facet wedge of $\Ord(\Pscr)$ at $F$, giving us the desired result.
\end{proof}

The last operation we will introduce is a weaker version of the ordinal sum.

\begin{definition}  \label{def:orderpolyn}
Let  $\Pscr$ and $\Qscr$ be posets on disjoint sets, $a$ a maximal element of $\Pscr$ and $b$ a minimal element of $\Qscr$. The \emph{the partial ordinal sum of $\Pscr$ and $\Qscr$ with respect to $a$ and $b$}, denoted by $\Pscr \oplus_{(a,b)} \Qscr$ is the poset attained by taking all elements and relations of $\Pscr$ and $\Qscr$ and adding the relations that $a$ is less than all elements of  $\Qscr$ while $b$ is greater than all elements of $\Pscr$.
\end{definition}

\begin{example}
Consider the posets $\Rscr$ and $\Qscr$ with the Hasse diagrams below on the left and center. On the right is the Hasse diagram of the partial ordinal sum of $\Rscr$ and $\Qscr$ with respect to $4$ and $A$.
\[\begin{tikzpicture}
\filldraw (0,0) node[above] {3} circle (2pt);
\filldraw (1,0) node[above] {4} circle (2pt);
\filldraw (2,0) node[above] {5} circle (2pt);
\filldraw (0.5,-1) node[below] {1} circle (2pt);
\filldraw (1.5,-1) node[below] {2} circle (2pt);
\draw (0,0) -- (0.5,-1) -- (1,0) -- (1.5,-1) -- (2,0);
\draw (1,-2) node {$\Rscr$};
\begin{scope}[xshift=2cm]
\filldraw (2.5,0) node[above] {C} circle (2pt);
\filldraw (2,-1) node[below] {A} circle (2pt);
\filldraw (3, -1) node[below] {B} circle (2pt);
\draw (2,-1) -- (2.5,0) -- (3, -1);
\draw (2.5, -2) node {$\Qscr$};
\end{scope}
\begin{scope}[xshift=5cm]
\filldraw (2.5,0) node[above] {C} circle (2pt);
\filldraw (2,-1) node[left] {A} circle (2pt);
\filldraw (3, -1) node[right] {B} circle (2pt);
\draw (2,-1) -- (2.5,0) -- (3, -1);
\filldraw (1.5,-2) node[left] {3} circle (2pt);
\filldraw (2.5,-2) node[left] {4} circle (2pt);
\filldraw (3.5,-2) node[right] {5} circle (2pt);
\filldraw (2,-3) node[below] {1} circle (2pt);
\filldraw (3,-3) node[below] {2} circle (2pt);
\draw (1.5,-2) -- (2,-3) -- (2.5,-2) -- (3,-3) -- (3.5,-2);
\draw (1.5,-2) -- (2,-1) -- (3.5,-2);
\draw (2,-1) -- (2.5,-2) -- (3, -1);
\draw (2.5, -4) node {$\Rscr \oplus_{(4,A)} \Qscr$};
\end{scope}
\end{tikzpicture}\]

\end{example}

Once more this operation has a simple interpretation in terms of its effect on the order polytopes.

\begin{proposition} \label{prop:orderpolyn}
Let  $\Pscr$ and $\Qscr$ be finite posets on disjoint sets, $a$ a maximal element of $\Pscr$ and $b$ a minimal element of $\Qscr$. Then $\Ord(\Pscr \oplus_{(a,b)} \Qscr)$ is the facet product $\Ord(\Pscr) \otimes_F \Ord(\Qscr)$ where we identify
with $F$ the facets $F_1:t_a \leq 1$ of $\Ord(\Pscr)$ and $F_2:t_b \geq 0$ of $\Ord(\Qscr)$.
\end{proposition}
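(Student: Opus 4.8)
The plan is to prove this by a direct combinatorial comparison, exhibiting incidence-preserving bijections between the vertices and between the facets of $\Ord(\Pscr\oplus_{(a,b)}\Qscr)$ and of $\Ord(\Pscr)\otimes_F\Ord(\Qscr)$. Throughout, let $X,Y$ be the ground sets of $\Pscr,\Qscr$ and write $\mathscr{S}:=\Pscr\oplus_{(a,b)}\Qscr$.

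The first step is to record the combinatorial structure of a facet product. Because $P\otimes_F Q=(P^{\ast}\oplus_{\p}Q^{\ast})^{\ast}$ with $\p$ dual to the identified facets $\hat F$ of $P$ and $\bar F$ of $Q$, dualizing the combinatorial description of vertex sums from Section~\ref{sec:mcmullen} gives: the facets of $P\otimes_F Q$ are the facets of $P$ other than $\hat F$, the facets of $Q$ other than $\bar F$, and the facet $F$; the vertices are the vertices of $P$ lying in $\hat F$, the vertices of $Q$ lying in $\bar F$, and one vertex $(\vv,\w)$ for each pair $\vv\in\ver(P)\setminus\hat F$, $\w\in\ver(Q)\setminus\bar F$. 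The incidences are dictated by the vertex-sum structure: the facet coming from a facet $F'$ of $P$ contains the vertices of $P$ lying in $F'\cap\hat F$, all the vertices of $Q$ in $\bar F$, and all pairs $(\vv,\w)$ with $\vv\in F'$; symmetrically for facets coming from $Q$; and $F$ contains exactly the vertices of $P$ in $\hat F$ together with those of $Q$ in $\bar F$.

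The second step is to read off the combinatorial structure of $\Ord(\mathscr{S})$. From Definition~\ref{def:orderpolyn} one checks that the minimal elements of $\mathscr{S}$ are exactly those of $\Pscr$, the maximal elements are exactly those of $\Qscr$, and the cover relations of $\mathscr{S}$ are the covers of $\Pscr$, the covers of $\Qscr$, the relations $x\cover b$ for $x$ maximal in $\Pscr$, and the relations $a\cover y$ for $y$ minimal in $\Qscr$ — the last two families meeting only in $a\cover b$. This gives the facets of $\Ord(\mathscr{S})$. For the vertices, a set $J\subseteq X\cup Y$ with $L:=J\cap X$ and $M:=J\cap Y$ is a filter of $\mathscr{S}$ if and only if $L$ is a filter of $\Pscr$, $M$ is a filter of $\Qscr$, $b\in M$ whenever $L\neq\emptyset$, and $M=Y$ whenever $a\in L$; these split into three disjoint classes: (I) $a\in L$ (hence $M=Y$); (II) $L=\emptyset$ and $b\notin M$; (III) $a\notin L$ and $b\in M$.

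The third step is to match the two descriptions and check incidences. Since $\chi_L\in F_1\iff a\in L$ and $\chi_M\in F_2\iff b\notin M$, class (I) corresponds to the vertices of $\Ord(\Pscr)$ in $F_1$, class (II) to the vertices of $\Ord(\Qscr)$ in $F_2$, and class (III) to the pairs $(\vv,\w)$ with $\vv\notin F_1$, $\w\notin F_2$; on facets, the facets of $\Ord(\Pscr)$ other than $F_1$ are matched to $\{t_i=0\}$ ($i$ minimal in $\Pscr$), $\{t_i=t_j\}$ ($i\cover j$ in $\Pscr$), and $\{t_i=t_b\}$ ($i$ maximal in $\Pscr$, $i\ne a$) — with the facet $\{t_i\le 1\}$ of $\Ord(\Pscr)$ matched to $\{t_i=t_b\}$ — symmetrically for $\Qscr$, and the facet $F$ is matched to $\{t_a=t_b\}$. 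Verifying that these bijections preserve incidences is then a routine check organized by facet type, each case reducing to a short boolean computation; for instance $\chi_J$ lies on $\{t_a=t_b\}$ precisely when $a\in L$ or $b\notin M$, i.e.\ precisely for classes (I) and (II), which are exactly the vertices on the facet $F$. I expect the only real difficulty to be this bookkeeping, and in particular the conceptual point that the ``new'' covers $x\cover b$ and $a\cover y$ (other than $a\cover b$) take over the role of the remaining maximal-element facets of $\Ord(\Pscr)$ and minimal-element facets of $\Ord(\Qscr)$; once that identification is made, the incidence verification is mechanical.
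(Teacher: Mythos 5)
Your proposal is correct and follows essentially the same route as the paper: both identify the facets and the vertices (filters) of $\Ord(\Pscr \oplus_{(a,b)} \Qscr)$ with those of the facet product in exactly the same way and then verify that the incidence structure matches, the paper recording this verification by writing out the slack matrices. Your explicit dualization of the vertex-sum table to obtain the combinatorial structure of $\Ord(\Pscr)\otimes_F\Ord(\Qscr)$ only makes explicit what the paper uses implicitly, so it is not a genuinely different approach.
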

\begin{proof}
In terms of facets, all the facets except the ones corresponding to maximal elements of $\Pscr$ and minimal elements of $\Qscr$ are preserved untouched. For the maximal elements $c$ of $\Pscr$ different from $a$,
we can identify the facets of $\Ord(\Pscr)$ given by $t_c \leq 1$ with the new facets $t_c \leq t_b$ of $\Ord(\Pscr \oplus_{(a,b)} \Qscr)$. We can similarly identify the facets corresponding to minimal elements $d$
of $\Qscr$ different from $b$ with the facets $t_a \leq t_d$. We are left with facets $F_1$ and $F_2$, corresponding to the two special extremal elements, that will disappear and be replaced with a single facet $F:t_a \leq t_b$.

In terms of vertices, any filter of $\Pscr$ that contains $a$ gives rise to a filter of $\Pscr \oplus_{(a,b)} \Qscr$ only by adding all elements of $\Qscr$, and those are the only filters in that poset that contain $a$. Similarly, any filter in $\Qscr$ that does not contain $b$ is also a filter of $\Pscr \oplus_{(a,b)} \Qscr$, and those are the only filters in that polytope that do not contain $b$. This means that the only vertices that we have to deal with are those filters of $\Pscr \oplus_{(a,b)} \Qscr$ that do not contain $a$ but do contain $b$. The restriction of any such filter to the elements of $\Pscr$ and $\Qscr$ gives rise to a pair of filters in those posets. Moreover, the union of a filter of $\Pscr$ not containing $a$ and a filter of $\Qscr$ containing $b$ is always a valid filter in $\Pscr \oplus_{(a,b)} \Qscr$.

This means that a vertex in $\Ord(\Pscr \oplus_{(a,b)} \Qscr)$ can be identified with either a vertex in $V$, where $V$ is the set of vertices in $\Ord(\Pscr)$  in the facet $F_1$,  with a vertex in $W$, where $W$ is the set of vertices in $\Ord(\Qscr)$ in the facet $F_2$, or with a pair of vertices $(v,w) \in \bar{V} \times \bar{W}$ where $\bar{V}$ and $\bar{W}$ are, respectively, the vertices of $\Ord(\Pscr)$ not in $F_1$ and those of of $\Ord(\Qscr)$ not in $F_2$. It is now easy to check that if the slack matrices of  $\Ord(\Pscr)$ and  $\Ord(\Qscr)$ are the ones below on the left and center, with the identifications introduced above, then the slack matrix of $\Ord(\Pscr \oplus_{(a,b)} \Qscr)$ is the one below on the right.
\[\kbordermatrix{
    & V &  \bar{V}\\
\mathcal{F}  & A & B \\
F_1 & \mathbf{0}^T & \mathbf{1}^T
} \hspace{1cm}
\kbordermatrix{
    & W &  \bar{W}\\
\mathcal{F}'    & C & D \\
F_2 & \mathbf{0}^T & \mathbf{1}^T
} \hspace{1cm}
\kbordermatrix{
              & V & W & \bar{V} \times \bar{W} \\
 \mathcal{F}  & A & O & B \otimes \mathbf{1}^T \\
 \mathcal{F}' & O & C & \mathbf{1}^T \otimes D \\
         {F}  & \mathbf{0}^T & \mathbf{0}^T & \mathbf{1}^T
}\]
The last matrix is precisely the slack matrix of $\Ord(\Pscr) \otimes_F \Ord(\Qscr)$, concluding the proof.
\end{proof}

  \section{Graphicality of order polytopes} \label{sec:conclusions}

  In this section, we put together the work developed in the previous two sections to derive sufficient conditions for graphicality, and consequently projective uniqueness, to arise in order polytopes. In particular, we will prove that every finite ranked poset with no 3-antichain has a graphic order polytope.

  We begin by stating several graphicality results that follow immediately from the descriptions of order polytopes in Section 4 along with the general results on graphicality in Section 3. The operations denoted in this statement are the ones introduced in Definitions \ref{def:orderpolymain} and \ref{def:orderpolyn}.

  \begin{proposition} \label{cor:simplePU} Let $\Pscr$ and $\Qscr$ be finite posets on disjoint ground sets. Then if $\Ord(\Pscr)$ and $\Ord(\Qscr)$ are projectively unique (respectively graphic),
  so are $\Ord(\Pscr_{\rev})$, $\Ord(\Pscr \vee \Qscr)$, $\Ord(\Pscr \oplus \Qscr)$ and $\Ord(\Pscr \oplus_{(a,b)} \Qscr)$.
  \end{proposition}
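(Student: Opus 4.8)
The plan is to assemble this proposition directly from the structural results already proven, treating each of the four operations in turn and citing the appropriate combinatorial equivalence from Section 4 together with the corresponding preservation theorem from Section 3. Since graphicality and projective uniqueness are both invariants of the combinatorial equivalence class of a polytope (the former because $I_P$ and $T_P$ depend only on the non-incidence graph / support of the slack matrix, the latter by definition), it suffices in each case to rewrite $\Ord$ of the new poset as the output of a McMullen operation applied to $\Ord(\Pscr)$ and $\Ord(\Qscr)$, and then invoke Theorem \ref{McOperations} (for projective uniqueness) or Theorem \ref{thm:main} (for graphicality).

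Concretely, I would organize the proof as four short paragraphs. First, for $\Pscr_{\rev}$: by Proposition \ref{prop:orderpolymain}(1), $\Ord(\Pscr_{\rev})$ is affinely equivalent to $\Ord(\Pscr)$, hence combinatorially equivalent, so it is projectively unique (resp.\ graphic) whenever $\Ord(\Pscr)$ is — here nothing from Section 3 is even needed. Second, for $\Pscr \vee \Qscr$: by Proposition \ref{prop:orderpolymain}(2), $\Ord(\Pscr \vee \Qscr)$ is combinatorially equivalent to $\Ord(\Pscr) \vee \Ord(\Qscr)$, so Theorem \ref{McOperations}(1) gives projective uniqueness and Theorem \ref{thm:main}(1) gives graphicality. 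Third, for $\Pscr \oplus \Qscr$: by Proposition \ref{prop:orderpolymain}(3), $\Ord(\Pscr \oplus \Qscr)$ is combinatorially equivalent to a vertex sum $\Ord(\Pscr) \oplus_{(\vv,\w)} \Ord(\Qscr)$, so Theorem \ref{McOperations}(2) and Theorem \ref{thm:main}(2) apply. Fourth, for $\Pscr \oplus_{(a,b)} \Qscr$: by Proposition \ref{prop:orderpolyn}, $\Ord(\Pscr \oplus_{(a,b)} \Qscr)$ is the facet product $\Ord(\Pscr) \otimes_F \Ord(\Qscr)$, which by Definition \ref{fp} is $(\Ord(\Pscr)^\ast \oplus_{\p} \Ord(\Qscr)^\ast)^\ast$; then apply Proposition \ref{d} (duality preserves graphicality) together with the fact that duality preserves projective uniqueness, the vertex-sum results above applied to the duals, and duality once more.

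I do not expect a genuine obstacle here, since everything has been set up in the preceding sections; the proof is essentially a bookkeeping exercise. The one point requiring a sentence of care is the facet-product case, because one must chase the argument through three applications of duality (dualize $\Ord(\Pscr)$ and $\Ord(\Qscr)$, form the vertex sum, then dualize back), invoking at each stage that both graphicality (Proposition \ref{d}) and projective uniqueness are self-dual properties, and that vertex sum of graphic (resp.\ projectively unique) polytopes is again graphic (resp.\ projectively unique). I would write that case out in full and dispatch the other three with a single sentence each.

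\begin{proof}
All four statements follow by combining the combinatorial descriptions of Section~\ref{sec:posets} with the preservation results of Section~\ref{sec:operations}, using that both projective uniqueness and graphicality are invariants of the combinatorial equivalence class of a polytope.

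For $\Pscr_{\rev}$, Proposition~\ref{prop:orderpolymain}(1) gives that $\Ord(\Pscr_{\rev})$ is affinely equivalent, hence combinatorially equivalent, to $\Ord(\Pscr)$, so it inherits projective uniqueness and graphicality directly. For $\Pscr \vee \Qscr$, Proposition~\ref{prop:orderpolymain}(2) identifies $\Ord(\Pscr\vee\Qscr)$ combinatorially with $\Ord(\Pscr)\vee\Ord(\Qscr)$; thus projective uniqueness follows from Theorem~\ref{McOperations}(1) and graphicality from Theorem~\ref{thm:main}(1). For $\Pscr \oplus \Qscr$, Proposition~\ref{prop:orderpolymain}(3) identifies $\Ord(\Pscr\oplus\Qscr)$ combinatorially with a vertex sum $\Ord(\Pscr)\oplus_{(\vv,\w)}\Ord(\Qscr)$, so Theorem~\ref{McOperations}(2) gives projective uniqueness and Theorem~\ref{thm:main}(2) gives graphicality.

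Finally, for $\Pscr\oplus_{(a,b)}\Qscr$, Proposition~\ref{prop:orderpolyn} identifies $\Ord(\Pscr\oplus_{(a,b)}\Qscr)$ combinatorially with the facet product $\Ord(\Pscr)\otimes_F\Ord(\Qscr)$, which by Definition~\ref{fp} equals $\bigl(\Ord(\Pscr)^\ast \oplus_{\p} \Ord(\Qscr)^\ast\bigr)^\ast$ for suitable dual vertices. If $\Ord(\Pscr)$ and $\Ord(\Qscr)$ are projectively unique then so are their duals, hence so is their vertex sum by Theorem~\ref{McOperations}(2), and hence so is the dual of that vertex sum. Likewise, if $\Ord(\Pscr)$ and $\Ord(\Qscr)$ are graphic, then by Proposition~\ref{d} so are their duals, by Theorem~\ref{thm:main}(2) so is their vertex sum, and by Proposition~\ref{d} again so is the dual of that vertex sum. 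Therefore $\Ord(\Pscr\oplus_{(a,b)}\Qscr)$ is projectively unique (respectively graphic) whenever $\Ord(\Pscr)$ and $\Ord(\Qscr)$ are.
\end{proof}
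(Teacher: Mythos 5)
Your proposal is correct and takes essentially the same approach as the paper, which simply states that the result is immediate from Propositions \ref{prop:orderpolymain} and \ref{prop:orderpolyn} combined with Theorems \ref{McOperations} and \ref{thm:main}. Your more detailed unpacking of the facet-product case via Definition \ref{fp} and Proposition \ref{d} is exactly the duality argument the paper leaves implicit.
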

\begin{proof}
  This is immediate from Propositions \ref{prop:orderpolymain} and \ref{prop:orderpolyn} that describe the effect of the operations in the order polytopes and Theorems \ref{McOperations} and \ref{thm:main} that show that they all preserve projective uniqueness and graphicality.
\end{proof}

Another operation that preserves graphicality is the splitting of extremal elements, but that is a little more delicate to show.

\begin{proposition}\label{sm}
Let $\Pscr$ be a poset such that $\Ord(\Pscr)$ is graphic, and $c$ one of its extremal elements. Then
$\Ord(\Pscr_c)$  is graphic.
\end{proposition}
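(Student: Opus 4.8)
The plan is to realize $\Ord(\Pscr_c)$ as a facet wedge, pass to the dual, and apply the vertex‑splitting part of Theorem \ref{thm:main}. By Proposition \ref{prop:splitting}(2), $\Ord(\Pscr_c)$ is the facet wedge of $\Ord(\Pscr)$ along the facet $F$ cut out by $t_c\ge 0$ (if $c$ is minimal) or by $t_c\le 1$ (if $c$ is maximal); replacing $\Pscr$ by $\Pscr_{\rev}$ and using Proposition \ref{prop:orderpolymain}(1), we may assume $c$ is minimal and $F=\{t_c\ge 0\}$. By the remark following Definition \ref{fw}, the facet wedge of $\Ord(\Pscr)$ along $F$ is the dual of the vertex split of $\Ord(\Pscr)^{\ast}$ at the vertex $\p$ dual to $F$; that is, $\Ord(\Pscr_c)=\bigl((\Ord(\Pscr)^{\ast})_{\p}\bigr)^{\ast}$. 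Since $\Ord(\Pscr)$ is graphic, so is $\Ord(\Pscr)^{\ast}$ by Proposition \ref{d}, and by the same proposition it suffices to prove that $(\Ord(\Pscr)^{\ast})_{\p}$ is graphic. By Theorem \ref{thm:main}(3) this holds provided $\Ord(\Pscr)^{\ast}$ is not the vertex sum of two polytopes at $\p$, which by the analysis preceding the last theorem of Section \ref{sec:operations} is equivalent to the statement that deleting $\p$ and all of its neighbors from $\G_{\Ord(\Pscr)^{\ast}}$ does not increase the number of connected components.

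Now $\G_{\Ord(\Pscr)^{\ast}}=\G_{\Ord(\Pscr)}$, with $\p$ identified with the facet‑node $F$ and the neighbors of $\p$ identified with the vertex‑nodes $\chi_J$ for filters $J$ with $c\in J$ (the vertices of $\Ord(\Pscr)$ not on $F$). After these deletions the node $F$ becomes isolated, so what must be shown is that the induced subgraph $H$ on $\{\text{facets of }\Ord(\Pscr)\text{ other than }F\}\cup\{\chi_J:\ c\notin J\}$ is connected. First we reduce to the case that no element of $\Pscr$ is comparable to all the others: otherwise $\Pscr=\Pscr_1\vee\Pscr_2$ is a poset join, so $\Ord(\Pscr)=\Ord(\Pscr_1)\vee\Ord(\Pscr_2)$ and $\Ord(\Pscr_c)$ likewise splits as a join, and the claim follows from Theorem \ref{thm:main}(1), Proposition \ref{cor:simplePU}, and induction on $|\Pscr|$. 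After this reduction $\G_{\Ord(\Pscr)}$ is connected, and the argument becomes purely combinatorial. One checks easily that every remaining facet‑node has a neighbor in $H$ — the facets $t_y\ge 0$, $t_y\le t_z$, $t_y\le 1$ are non‑incident to $\chi_{(y)}$, $\chi_{(z)}$, $\chi_{\emptyset}$ respectively, and all of $(y)$, $(z)$, $\emptyset$ are filters avoiding $c$ — so it remains to connect every remaining vertex‑node $\chi_J$ to $\chi_{\emptyset}$.

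If the filter $J$ omits some maximal element $m$ of $\Pscr$, then $\chi_J$ and $\chi_{\emptyset}$ are both non‑incident to the facet $t_m\le 1\neq F$, yielding a path of length two; this disposes of most vertices. The delicate case is a filter $J$ with $c\notin J$ that nevertheless contains \emph{every} maximal element of $\Pscr$, and dealing with these is the main obstacle of the proof: one must route $\chi_J$ to $\chi_{\emptyset}$ through cover‑facets $t_i\le t_j$ and through the ``large'' filter $\Pscr\setminus\{c\}$, exploiting that $c$ has an upper cover once the join reduction has been performed, together with the identification of the facet $F$ with $\Ord(\Pscr\setminus\{c\})$ to organize the paths. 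Once $H$ is shown to be connected, $\Ord(\Pscr)^{\ast}$ is not a vertex sum at $\p$, so Theorem \ref{thm:main}(3) gives that $(\Ord(\Pscr)^{\ast})_{\p}$ is graphic, and hence by Proposition \ref{d} so is its dual $\Ord(\Pscr_c)$.
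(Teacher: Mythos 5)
Your overall strategy is the same as the paper's: realize $\Ord(\Pscr_c)$ as the facet wedge of $\Ord(\Pscr)$ along the facet $F$ coming from $c$ (Proposition \ref{prop:splitting}), dualize to a vertex split, and reduce everything to the connectivity criterion of Proposition \ref{prop:connectednesscondition}, i.e.\ to showing that deleting $F$ and all its neighbors from $\G_{\Ord(\Pscr)}$ does not disconnect the graph. Up to that point the argument is fine (the preliminary reduction via poset joins is harmless, though unnecessary). The problem is that the connectivity verification itself is not carried out: you correctly dispose of the facet-nodes and of the filters $J$ missing some maximal element, but then you explicitly label the filters $J$ with $c\notin J$ containing every maximal element as ``the delicate case'' and ``the main obstacle of the proof,'' and for these you only gesture at routing $\chi_J$ to $\chi_{\emptyset}$ ``through cover-facets \dots exploiting that $c$ has an upper cover.'' No path is actually exhibited, and it is not at all evident how to produce one; this is precisely where the content of the proposition lies, so the proof is incomplete.

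For comparison, the paper avoids this hub-and-spoke casework entirely with a uniform path-modification trick. Taking $c$ maximal (after reversing), the forbidden vertex-nodes are the filters \emph{not} containing $c$; given any two facets $\bar F,\tilde F$ and any path $\Gamma=\bar F w_0 F_1 w_1\cdots w_t\tilde F$ in $\G_{\Ord(\Pscr)}$, one replaces each filter $w_i$ by $w_i'=w_i\cup\{c\}$ (still a filter, since $c$ is maximal, and never a neighbor of $F$). The only edge this can destroy is an edge to $F$ itself, and that is repaired by substituting for $F$ the facet $F'$ coming from a cover $d\cover c$: a filter avoiding $c$ also avoids $d$, so $w_i'$ contains $c$ but not $d$ and hence is non-incident to $F'$. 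In your convention with $c$ minimal, the analogous move is to delete $c$ from every filter along the path. If you want to salvage your write-up, replacing the unfinished hub argument with this modification of an arbitrary path is the missing idea.
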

\begin{proof}
From Proposition \ref{prop:splitting} we know that the operation of splitting $c$ correspond to facet wedging in the order polytope with respect to the facet $F$ given by the extremal element $c$.
Moreover, Proposition \ref{prop:connectednesscondition} gives us a sufficient condition for such operation to preserve graphicality: that removing the facet $F$ and all of its neighbors from the non-incidence graph of $\Ord(\Pscr)$ does not create any new connected components. 

We may assume $c$ is a maximal element since reversing the poset preserves graphicality. Take any pair of facets $\bar{F}$ and $\tilde{F}$. There must be a path $\Gamma$ between them in the original non-incidence graph, say $\bar{F} w_0 F_1 w_1 \dots F_t w_t \tilde{F}$. Recall that each $w_i$ corresponds to a filter of the poset and that adding a maximal element to a filter preserves the filter property, so let $w'_i =w_i \cup\{c\}$. None of the $w'_i$ is a neighbor of $F$.

Now form a new sequence $\Gamma'$ by starting with $\Gamma$ and replacing $w_i$ by $w_i'$ for each $i$. If $w'_i \neq w_i$, then the only facet that neighbors $w_i$ but not  $w_i'$ is $F$, so the only potential problem is that $F$ might belong to $\Gamma$, in which case $\Gamma'$ lacks one edge to be a path. In this case, let $F'$ be the facet associated to a cover $d \cover c$ (or the facet induced by $c$ being a minimal element if $c$ is both maximal and minimal) and modify $\Gamma'$ by replacing $F$ by $F'$. Now if $w_i$ neighbors $F$, then it corresponds to a filter that does not contain $c$, hence also does not contain $d$. Then $w_i'$ corresponds to a filter that does contain $c$ but does not contain $d$, so $w_i'$ neighbors $F'$. Thus $\Gamma'$ is now a path from $\bar{F}$ to $\tilde{F}$ that avoides $F$ and all of its neighbors, so the condition in Proposition \ref{prop:connectednesscondition} is satisfied.\end{proof}

Two other operations were defined in Section 4: the direct sum of posets and cover splitting. It is not hard to see that these operations do not universally preserve graphicality.
\begin{example}
If we consider $\Pscr$ and $\Qscr$ to be posets on one and two elements and no relations as represented below, then their order polytopes are a segment and a square, both of which are graphic. However
$\Ord(\Pscr + \Qscr)$ is a cube, which is not projectively unique and hence not graphic

\[\begin{tikzpicture}
\filldraw (2,0) node[above] {1} circle (2pt);
\filldraw (3,0) node[above] {2} circle (2pt);
\draw (2.5,-0.5) node {$\Qscr$};

\filldraw (0,0) node[above] {A} circle (2pt);
\draw (0, -0.5) node {$\Pscr$};

\begin{scope}[xshift=4cm]
\filldraw (2,0) node[above] {1} circle (2pt);
\filldraw (3,0) node[above] {2} circle (2pt);
\filldraw (1,0) node[above] {A} circle (2pt);
\draw (2, -0.5) node {$\Pscr+\Qscr$};
\end{scope}
\end{tikzpicture}\]
Similarly, if $\Rscr$ is the poset shown below, it is not hard to see that its order polytope is graphic, as $\Pscr$ is a partial ordinal sum of two posets with two elements and no relations.
However, one can computationally check that $\Rscr_{2\cover 3}$ is not projectively unique or graphic.
\[\begin{tikzpicture}
\filldraw (0,0) node[above] {3} circle (2pt);
\filldraw (1,0) node[above] {4} circle (2pt);

\filldraw (0,-1) node[below] {1} circle (2pt);
\filldraw (1,-1) node[below] {2} circle (2pt);
\draw (0,-1) -- (0,0) -- (1,-1) -- (1,0);
\draw (0.5,-2) node {$\Rscr$};

\begin{scope}[xshift=3cm]
\filldraw (0,0) node[above] {3} circle (2pt);
\filldraw (1,0) node[above] {4} circle (2pt);
\filldraw (0.5,-0.5) node[above] {$\ast$} circle (2pt);
\filldraw (0,-1) node[below] {1} circle (2pt);
\filldraw (1,-1) node[below] {2} circle (2pt);
\draw (0,-1) -- (0,0) -- (0.5,-0.5) -- (1,-1) -- (1,0);
\draw (0.5,-2) node {$\Rscr_{2\cover 3}$};
\end{scope}
\end{tikzpicture}\]
\end{example}

In both of these cases, non-graphicality seems related to the antichain of size three. If $\Pscr$ is a poset with an antichain of size three, then $\Ord(\Pscr)$ always has a face which is a $3$-cube. This face is not projectively unique. This does not imply that $\Ord(\Pscr)$ itself cannot be projectively unique (see the discussion of non-prescribable faces in \cite{GMTW18-1}) but it strongly suggests that it may not be. If we rule these antichains out, we can show graphicality for ranked posets.

\begin{theorem}\label{mt}
Let $\Pscr$ be a finite ranked poset with no 3-antichain. Then $\Ord(\Pscr)$ is graphic, and therefore projectively unique.
\end{theorem}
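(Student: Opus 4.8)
The plan is to induct on the structure of a finite ranked poset $\Pscr$ with no antichain of size $3$, reducing $\Ord(\Pscr)$ via the operations catalogued in Section~\ref{sec:posets} to order polytopes of posets whose graphicality is already established (ultimately, simplices and squares, which are graphic). The key structural observation I would exploit is that a finite ranked poset $\Pscr$ with no $3$-antichain has a very restricted shape: at each rank level there are at most two elements, so $\Pscr$ is (essentially) a ``ladder'' that is built up by stacking chains and two-element antichains. The first step is therefore to prove a combinatorial decomposition lemma: any finite ranked poset with no $3$-antichain can be obtained from the one-element poset by a finite sequence of the operations (a) adjoining a universal maximum/minimum (join with a point, i.e.\ pyramid), (b) splitting a maximal or minimal element, (c) ordinal sum with a chain, and (d) a partial ordinal sum $\oplus_{(a,b)}$ used to ``close off'' one of the two strands of a width-$2$ stretch. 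The delicate point here is bookkeeping with the rank function: I would peel elements off from the top rank, distinguishing the case of a single top element (which is a universal maximum, handled by a pyramid, possibly after first recognizing it as a split of the element below it) from the case of two incomparable top elements, in which case the absence of a $3$-antichain forces the rank just below to also have a controlled structure, allowing a partial ordinal sum or ordinal sum decomposition.

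\textbf{Carrying out the induction.} With such a decomposition in hand, the proof is then a routine application of the graphicality-preservation results. By Proposition~\ref{cor:simplePU}, graphicality is preserved under reverse, join, ordinal sum, and partial ordinal sum of order polytopes; by Proposition~\ref{sm} it is preserved under splitting an extremal element; and a pyramid over a graphic polytope is graphic since its slack ideal is unchanged. The base cases are the point, the segment ($\Ord$ of a $2$-chain, a triangle, which is graphic as any simplex is), and the square ($\Ord$ of a $2$-antichain, which is graphic). Each reduction step strictly decreases $|\Pscr|$, so the induction terminates, and at every stage the intermediate poset still has no $3$-antichain (this needs a one-line check for each operation: none of join-with-a-point, element splitting, ordinal sum with a chain, or closing off a strand can create a new $3$-antichain in a poset that had none). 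The conclusion ``therefore projectively unique'' is then immediate from Theorem~\ref{m2l}(2).

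\textbf{Main obstacle.} The substantive work is entirely in the decomposition lemma, not in the graphicality machinery. The hard part is verifying that \emph{every} finite ranked poset with no $3$-antichain genuinely admits such a decomposition using only the five allowed operations --- in particular, that the two strands of a width-$2$ segment can always be reattached correctly by a single partial ordinal sum, and that one never needs a direct sum of posets or a cover-splitting (the two operations shown in the preceding example \emph{not} to preserve graphicality). Concretely, I expect the argument to go by considering a top element $a$ of $\Pscr$: if $a$ is the unique maximum, remove it and either undo a pyramid or undo an element-split; if there are two maximal elements $a, a'$, look at $\mathord\downarrow a \cap \mathord\downarrow a'$ and argue, using rankedness and the no-$3$-antichain condition, that $\Pscr$ splits as an ordinal sum $\Qscr \oplus (\text{width}\le 2 \text{ top piece})$ or a partial ordinal sum, with both pieces ranked and $3$-antichain-free and strictly smaller. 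Getting all the edge cases of this case analysis right --- especially when $\Pscr$ has width exactly $2$ throughout and when maximal and minimal elements coincide --- is where the care is needed.
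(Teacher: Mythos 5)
Your strategy is essentially the paper's: induct by peeling off the top-ranked elements and realize each re-attachment as one of the graphicality-preserving operations from Sections 3 and 4, with the segment and the square as base cases. The one substantive component you leave open --- the decomposition lemma --- is resolved in the paper more directly than your sketch anticipates: since rankedness plus the absence of a $3$-antichain forces at most two elements in each rank, removing the top rank leaves a poset $\Pscr_0$ of rank $n-1$, and the bipartite Hasse-diagram pattern between the (at most two) top-rank elements and the rank-$(n-1)$ elements they cover is one of exactly six configurations, each of which is immediately a split of a maximal element, an ordinal sum with a one- or two-element antichain, or a partial ordinal sum. Two details your outline would still need: (i) in the ordinal-sum configurations one must observe that the rank-$(n-1)$ elements being covered are necessarily \emph{all} the maximal elements of $\Pscr_0$ (otherwise $\Pscr$ would contain a $3$-antichain), which is what legitimizes the $\oplus$ interpretation; and (ii) the case of a disconnected Hasse diagram, which under the no-$3$-antichain hypothesis can only be a direct sum of two chains and is handled separately by repeatedly splitting maximal elements starting from the two-element antichain. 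Your mention of ``ordinal sum with a chain'' is slightly off --- the pieces glued on are antichains of size one or two, not chains --- but this does not affect the argument.
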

\begin{proof}

  We prove this by induction on the rank of $\Pscr$. If $\Pscr$ has rank 0, then its order polytope is a segment or a square, both of which are graphic. Also note that if there are no 3-antichains, then the only possibility for the Hasse diagram to be disconnected is that  $\Pscr$ is the direct sum of two chains. In this case, $\Ord(\Pscr)$ is graphic, as it can be attained from successively splitting maximal elements starting with the poset of two unrelated elements. Thus our strategy will be to assume that this happens for all finite ranked posets of rank $n-1$ and prove it for finite ranked posets of rank $n$ such that the Hasse diagram is connected.
  
Note that elements of top rank are maximal and elements of constant rank form an antichain. Let $\Pscr_0$ be the poset obtained from $\Pscr$ by removing its top ranked elements, which we can suppose without loss of generality have rank $n$. The Hasse diagram of  $\Pscr$ is constructed from that of $\Pscr_0$ by connecting the rank $n$ elements to the rank $n-1$ elements in one of the following ways.
\[\begin{tikzpicture}[scale=0.9]
\filldraw (-1.5,1) node {rank $n$};
\filldraw (-1.5,0) node {rank $n-1$};
\filldraw (0,0) node {} circle (2pt);
\filldraw (0,1) node {} circle (2pt);
\filldraw (0.5,-0.5) node {$\Pscr_0$};
\draw (0,0) -- (0,1);
\draw[dashed] (-0.25,-1) rectangle (1.25, 0.5);
\begin{scope}[xshift=2cm]
\filldraw (0,0) node {} circle (2pt);
\filldraw (0,1) node {} circle (2pt);
\filldraw (1,0) node {} circle (2pt);
\filldraw (0.5,-0.5) node {$\Pscr_0$};
\draw (0,0) -- (0,1) -- (1,0);
\draw[dashed] (-0.25,-1) rectangle (1.25, 0.5);
\end{scope}
\begin{scope}[xshift=4cm]
\filldraw (0,0) node {} circle (2pt);
\filldraw (0,1) node {} circle (2pt);
\filldraw (1,0) node {} circle (2pt);
\filldraw (1,1) node {} circle (2pt);
\filldraw (0.5,-0.5) node {$\Pscr_0$};
\draw (0,0) -- (0,1) -- (1,0)--(1,1)--(0,0);
\draw[dashed] (-0.25,-1) rectangle (1.25, 0.5);
\end{scope}
\begin{scope}[xshift=6cm]
\filldraw (0,0) node {} circle (2pt);
\filldraw (0,1) node {} circle (2pt);
\filldraw (1,0) node {} circle (2pt);
\filldraw (1,1) node {} circle (2pt);
\filldraw (0.5,-0.5) node {$\Pscr_0$};
\draw (0,0) -- (0,1);
\draw (1,0) -- (1,1);
\draw[dashed] (-0.25,-1) rectangle (1.25, 0.5);
\end{scope}
\begin{scope}[xshift=8cm]
\filldraw (0,0) node {} circle (2pt);
\filldraw (0,1) node {} circle (2pt);
\filldraw (1,0) node {} circle (2pt);
\filldraw (1,1) node {} circle (2pt);
\filldraw (0.5,-0.5) node {$\Pscr_0$};
\draw (0,0) -- (0,1)--(1,0)--(1,1);
\draw[dashed] (-0.25,-1) rectangle (1.25, 0.5);
\end{scope}
\begin{scope}[xshift=10cm]
\filldraw (0,0) node {} circle (2pt);
\filldraw (0,1) node {} circle (2pt);
\filldraw (1,1) node {} circle (2pt);
\filldraw (0.5,-0.5) node {$\Pscr_0$};
\draw (1,1)--(0,0) -- (0,1);
\draw[dashed] (-0.25,-1) rectangle (1.25, 0.5);
\end{scope}
\end{tikzpicture}\]
Note now that the first and fourth cases are obtained by splitting maximal elements of $\Pscr_0$. The second, third and sixth cases are ordinal sums of  $\Pscr_0$ with the posets of a single element (in the second) and two unrelated elements (in the third and sixth). Note that, in these three cases, the rank $n-1$ elements drawn must be the full set of maximal elements of $\Pscr_0$, as otherwise there would be a $3$-antichain in $\Pscr$. Finally, the fifth case is a partial ordinal sum of  $\Pscr_0$ with the poset of two unrelated elements. Since we have seen that all these operations preserve graphicality, the result follows.
\end{proof}

Note that this Theorem is not exhaustive of all graphic order polytopes, since the ranked condition is not necessary.

\begin{example}
Let $\Pscr$ be the following poset. 
\[\begin{tikzpicture}
\filldraw (0,0) node[above] {3} circle (2pt);
\filldraw (1,0) node[above] {4} circle (2pt);
\filldraw (0,-.5) node[left] {$\ast$} circle (2pt);
\filldraw (0,-1) node[below] {1} circle (2pt);
\filldraw (1,-1) node[below] {2} circle (2pt);
\draw (1,0)--(0,-1) -- (0,0) -- (1,-1) -- (1,0);
\end{tikzpicture}\]
This is not a ranked poset, but it is easy to check computationally that it is graphic. In fact, it can be attained from a ranked poset with no $3$-antichains by splitting a cover, and one could show more generally that cover splitting preserves graphicality under mild assumptions (essentially that it does not create a $3$-antichain).
\end{example}

To conclude this discussion we present two conjectures on the necessary and sufficient conditions for order polytopes to be graphic and projectively unique. First we conjecture that the ranked condition can simply be dropped from Theorem \ref{mt}.

\begin{conjecture} \label{conj:unranked}
Let $\Pscr$ be a finite poset (not necessarily ranked) with no 3-antichain. Then $\Ord(\Pscr)$ is graphic.
\end{conjecture}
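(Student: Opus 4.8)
The plan is to mimic the proof of Theorem~\ref{mt}, but since no rank function is available I would induct on $|\Pscr|$, and I would supply one new ingredient: a lemma that \emph{splitting a cover} preserves graphicality whenever doing so does not create a $3$-antichain. Granting that lemma, every finite poset with no $3$-antichain can be reduced, by inverse cover-splittings together with the structural reductions below, to a ranked poset with no $3$-antichain, whose order polytope is graphic by Theorem~\ref{mt}.

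First I would dispose of the easy structural reductions. By Dilworth's theorem a finite poset with no $3$-antichain has width at most $2$, so it has at most two maximal and at most two minimal elements. If $\Pscr$ has a unique maximal element $a$, then $a$ lies above every other element and covers exactly the maximal elements of $\Pscr_0:=\Pscr\setminus\{a\}$, so $\Pscr=\Pscr_0^{\Delta}$ and $\Ord(\Pscr)=\pyr(\Ord(\Pscr_0))$; since the slack ideal of a pyramid equals that of its base, graphicality descends to $\Pscr_0$, which has fewer elements and no $3$-antichain. The reverse poset $\Pscr_{\rev}$ and Proposition~\ref{prop:orderpolymain}(1) handle the case of a unique minimal element. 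Thus we may assume $\Pscr$ has exactly two maximal elements $a_1,a_2$ and, by the same reasoning, no unique minimal element either.

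Next I would run a case analysis on how $a_1$ and $a_2$ attach to $\Pscr\setminus\{a_1,a_2\}$, writing $S_i$ for the antichain of elements covered by $a_i$ ($|S_i|\le 2$). Several configurations reduce at once: if some $a_i$ covers a single element $b$ that is maximal in $\Pscr\setminus\{a_i\}$, then $\Pscr=(\Pscr\setminus\{a_i\})_b$ is a poset split of an extremal element and Proposition~\ref{sm} applies; if $S_1=S_2$ is the full set of maximal elements of $\Pscr\setminus\{a_1,a_2\}$, then $\Pscr$ is the ordinal sum of $\Pscr\setminus\{a_1,a_2\}$ with a $2$-element antichain, covered by Proposition~\ref{cor:simplePU}; and a short argument (using that $a_i$ covers only $b_i$ forces $b_1\preceq b_2$ and $b_2\preceq b_1$ in the surviving cases) shows that when some $a_i$ covers a single element not maximal in $\Pscr\setminus\{a_i\}$ one again lands in the previous ordinal-sum case. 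The genuinely delicate configurations are those where both $S_1$ and $S_2$ have size $2$ and overlap without being equal; using that $\Pscr$ has no $3$-antichain one shows the two ``extra'' covered elements are comparable, and then $\Pscr$ is usually a partial ordinal sum $(\Pscr\setminus\{a_1,a_2\})\oplus_{(b,a_2)}\{a_1,a_2\}$ handled by Proposition~\ref{prop:orderpolyn}, but a residual subcase persists. It is these residual subcases that I expect to require the cover-splitting lemma.

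The cover-splitting lemma is the main obstacle. Concretely: \emph{if $\Pscr'$ is obtained from $\Pscr$ by splitting a cover $a\cover b$ and $\Pscr'$ has no $3$-antichain, then $\Ord(\Pscr)$ graphic implies $\Ord(\Pscr')$ graphic.} By Proposition~\ref{prop:splitting}, $\Ord(\Pscr')$ is the facet wedge of $\Ord(\Pscr)$ along $F\colon t_a\le t_b$, which is dual to a vertex split; so by the dual of Proposition~\ref{prop:connectednesscondition} it suffices to show that deleting $F$ together with all of its neighbours (the filters $J$ with $b\in J$ and $a\notin J$) from the non-incidence graph $\G_{\Ord(\Pscr)}$ leaves a connected graph. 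This is a purely combinatorial statement about filters of a width-$2$ poset: any two facets of $\Ord(\Pscr)$ should be joinable by a path of facet–filter incidences avoiding all filters that separate $a$ from $b$, and I would prove it by exploiting the antichain description of filters and the fact that in width $2$ one can always ``push'' a filter off of $F$ by adding or deleting a single element without leaving the relevant part of $\G_{\Ord(\Pscr)}$. The no-new-$3$-antichain hypothesis is essential: for $\Rscr$ with $1\cover 3$, $2\cover 3$, $2\cover 4$, splitting $2\cover 3$ creates the $3$-antichain $\{1,\ast,4\}$ and, as the text notes, $\Rscr_{2\cover 3}$ is not graphic, so the required connectivity must fail in that instance. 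Assembling the structural reductions, the bounded case analysis, and this lemma would close the induction; making the case analysis provably exhaustive, and proving the connectivity statement in full generality, are the two points where I expect the real work to lie.
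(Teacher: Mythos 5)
The statement you are addressing is Conjecture \ref{conj:unranked}: the paper does not prove it, and in fact it explicitly exhibits an obstruction to precisely the strategy you propose. Your induction on $|\Pscr|$ requires that every finite poset with no $3$-antichain (after the pyramid and reversal reductions) be obtainable by applying one of the available operations --- join, ordinal sum, partial ordinal sum, splitting an extremal element, or cover splitting --- to a strictly smaller poset with no $3$-antichain. The six-element unranked poset pictured in the paper immediately after the conjecture is a width-$2$ poset for which, as the authors state, it ``cannot be obtained by cover-splitting or any of the previously considered operations from another poset with fewer elements.'' One can check this directly: it has two maximal elements, each covering two elements, the covered sets overlap without being equal, no element has a unique lower cover together with a unique upper cover (so it is not a cover split), and the one candidate partial ordinal sum decomposition fails to account for all of its order relations. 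So even granting your cover-splitting lemma in full strength, your case analysis has no applicable reduction for this poset and the induction does not close. Tellingly, the authors were only able to verify \emph{computationally} that the order polytope of this example is projectively unique --- they do not even claim it is graphic --- which strongly suggests that no argument confined to these reductions can settle the conjecture.

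A secondary issue is that the cover-splitting lemma is itself only asserted, not proved, in your proposal. Your reduction of it, via Proposition \ref{prop:splitting} and duality, to the connectivity condition of Proposition \ref{prop:connectednesscondition} is the right frame, and the paper's remarks indicate the lemma should hold when no $3$-antichain is created; but the required connectivity of the non-incidence graph after deleting the facet $F\colon t_a\le t_b$ and all filters containing $b$ but not $a$ is exactly what fails for $\Rscr_{2\cover 3}$, so the argument must invoke the no-$3$-antichain hypothesis in an essential and currently unspecified way. Both gaps --- the unproved lemma and, more fundamentally, the irreducible six-element example --- leave the conjecture open; your proposal cannot be completed along these lines.
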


\begin{example}
  Dealing with cover-splitting would not be sufficient to prove Conjecture \ref{conj:unranked}. The following unranked poset cannot be obtained by cover-splitting or any of the previously considered operations from another poset with fewer elements.
  \[\begin{tikzpicture}
\filldraw (0,0) node[above] {} circle (2pt);
\filldraw (1,0) node[above] {} circle (2pt);
\filldraw (0,-.5) node[left] {} circle (2pt);
\filldraw (1,-.5) node[left] {} circle (2pt);
\filldraw (0,-1) node[below] {} circle (2pt);
\filldraw (1,-1) node[below] {} circle (2pt);
\draw (0,-1)--(0,0);
\draw (1,-1)--(1,0);
\draw (0,0)--(1,-.5);
\draw (0,-.5)--(1,-1);
\draw (0,-1)--(1,0);
\end{tikzpicture}\]
However, with the help of Antonio Macchia and Amy Wiebe and their Macaulay2 package for slack ideals \cite{MW20}, we were able to verify that the order polytope of this poset is at least projectively unique.    
  

\end{example}

Secondly, we believe that having no $3$-antichain is actually a necessary condition even for projective uniqueness.

\begin{conjecture}
Let $\Pscr$ be a finite poset. If $\Ord(\Pscr)$ is projectively unique, then $\Pscr$ has no antichain of size 3.
\end{conjecture}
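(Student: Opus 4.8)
Since $\Ord(\Pscr)$ is $2$-level, hence morally $2$-level, it suffices, by definition of projective uniqueness (equivalently, by Theorem~\ref{tpu}), to exhibit a polytope combinatorially but not projectively equivalent to $\Ord(\Pscr)$; in fact the plan is to produce a one-parameter family of such realizations by deforming the cube face that a large antichain forces upon the order polytope.

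\emph{Step 1: isolating a cube face.} Let $A$ be an antichain of $\Pscr$ of \emph{maximum} size $w$; by hypothesis $w\geq 3$. Put $J_1:=(A)\setminus A$ and $I_0:=\{x\in[d]:x\preceq a\text{ for some }a\in A\}\setminus A$, the elements lying strictly above, respectively strictly below, some member of $A$. Maximality of $A$ forces every element outside $A$ to be comparable to some element of $A$, so $[d]=I_0\sqcup A\sqcup J_1$ is a partition, and one checks that no relation puts an element of $J_1$ below an element of $I_0$ or of $A$. From Stanley's description of $\Ord(\Pscr)$ it then follows that $J_1\cup J'$ is a filter for every $J'\subseteq A$, and that $F:=\{\mathbf{t}\in\Ord(\Pscr):t_i=0\ (i\in I_0),\ t_i=1\ (i\in J_1)\}$ is a face of $\Ord(\Pscr)$, affinely equivalent to the cube $[0,1]^A$, with vertex set $\{\chi_{J_1\cup J'}:J'\subseteq A\}$. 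Since the $w$-cube is not projectively unique for $w\geq 3$, its realization space has dimension strictly greater than $\dim\mathrm{PGL}_{w+1}$, so there is a real-analytic path $C_s$ (with $C_0=[0,1]^A$) of combinatorially equivalent $w$-cubes such that $C_s$ is not projectively equivalent to $C_0$ for $s\neq 0$; the realizations along the path are separated, for example, by a cross-ratio of four suitably chosen coplanar vertices.

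\emph{Step 2: lifting the deformation.} The favorable structural point is that, working through the three kinds of facets of $\Ord(\Pscr)$ (from minimal elements, covers, and maximal elements) together with the partition $[d]=I_0\sqcup A\sqcup J_1$, one finds that every facet of $\Ord(\Pscr)$ either contains $F$ or meets $F$ in a facet of the cube $F$ --- there are no facets transverse to $F$ in any other way. The construction of $P_s$ would leave unchanged every defining inequality of $\Ord(\Pscr)$ whose facet contains $F$ (these involve only coordinates in $I_0\cup J_1$, or equate two such coordinates), and replace each inequality whose facet meets $F$ in a facet of $C_0$ by the inequality coming from the corresponding facet hyperplane of $C_s$, extended affinely in the $I_0\cup J_1$ directions. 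One then takes $P_s$ to be the polytope cut out by this perturbed system, moving the vertices $\chi_J$ with $J\not\supseteq J_1$ or $J\cap I_0\neq\emptyset$ by the rule that sends the cube vertices $\chi_{J\cap A}$ to the vertices of $C_s$, and must verify that for small $s$ all vertex--facet incidences of $\Ord(\Pscr)$ persist, so $P_s$ is combinatorially $\Ord(\Pscr)$. Granting this, the face of $P_s$ corresponding to $F$ is a copy of $C_s$, and the cross-ratio separating the $C_s$ becomes a projective invariant separating the $P_s$; hence $\Ord(\Pscr)$ has infinitely many projective classes and is not projectively unique.

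\emph{Main obstacle.} The hard step is the lift in Step 2: faces of general polytopes are notoriously non-prescribable, and even with the favorable facet structure one must still rule out that the inequalities containing $F$, combined with the (generic) non-simplicity of $\Ord(\Pscr)$, collapse or create a vertex of $P_s$. Overcoming this should require genuinely using the combinatorics of order polytopes --- that the ``transverse'' part of $\Pscr$ splits as down-intervals below $A$ and up-intervals above $A$, so the perturbed facets move independently of, and in a controlled way relative to, the pinned ones. An attractive alternative that sidesteps the geometry is to build the deforming family directly inside the slack variety $V(I_{\Ord(\Pscr)})$: the symbolic slack matrix inherits a block decomposition from $I_0\sqcup A\sqcup J_1$, and one would seek a one-parameter family $\boldsymbol{\alpha}(s)$ of positive vectors with $\boldsymbol{\alpha}(0)=\mathbf{1}$ that keeps $\rank S_{\Ord(\Pscr)}(\boldsymbol{\alpha}(s))=d+1$ --- certified block by block through the Flag Lemma (Lemma~\ref{lem:flag}) --- but does not lie in the torus orbit of $\mathbf{1}$, which by Theorem~\ref{tpu} again yields non-uniqueness.
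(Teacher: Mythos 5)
This statement is left as a \emph{conjecture} in the paper --- the authors offer no proof, and in fact they explicitly discuss why the strategy you chose does not immediately work: just before stating the conjecture they observe that a $3$-antichain forces $\Ord(\Pscr)$ to have a $3$-cube face, but that ``this does not imply that $\Ord(\Pscr)$ itself cannot be projectively unique'' because faces of polytopes are not in general prescribable. Your Step 1 is correct and reproduces exactly that observation (the face $F$ cut out by $t_i=0$ for $i\in I_0$ and $t_i=1$ for $i\in J_1$ is a $w$-cube). The gap is your Step 2, and you acknowledge it yourself: the entire content of the argument is the assertion that the deformed inequality system defines a polytope $P_s$ combinatorially equivalent to $\Ord(\Pscr)$, and you write ``must verify that \dots all vertex--facet incidences persist'' and then ``Granting this\dots''. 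Nothing is granted for free here; this is precisely the prescribability problem, and it is the reason the statement remains open. A facet such as $t_i=t_j$ with $i\in I_0$ and $j\in A$ does meet $F$ in a cube facet, but it also contains many vertices $\chi_J$ with $J\cap I_0\neq\emptyset$ far from $F$, so tilting its hyperplane to follow $C_s$ is not a local operation and can destroy incidences elsewhere; moreover $\Ord(\Pscr)$ is typically far from simple, so small perturbations of facet hyperplanes generically split vertices.

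There is also a concrete error in the structural claim of Step 2. It is not true that every facet of $\Ord(\Pscr)$ either contains $F$ or meets it in a facet of the cube: a cover relation $i\cover j$ with $i\in I_0$ and $j\in J_1$ (which is perfectly possible, e.g.\ $i\prec a$, $a'\prec j$, $i\cover j$ for distinct $a,a'\in A$) gives a facet $t_i=t_j$ that is \emph{disjoint} from $F$, since $t_i=0$ and $t_j=1$ there. The alternative route you sketch at the end --- exhibiting a positive point of the slack variety of rank $d+1$ outside the torus orbit of $\mathbf{1}$ --- is the right reformulation via Theorem~\ref{tpu}, but it is again only a plan: the Flag Lemma gives lower bounds on rank, not the upper bound $\rank S_{\Ord(\Pscr)}(\boldsymbol{\alpha}(s))\leq d+1$ that you would need, and constructing such a family is equivalent to the geometric problem you have not solved. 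In short, the proposal identifies the correct heuristic but does not prove the statement, and the statement is not proved in the paper either.
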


Note that these two conjectures together would in particular imply that projectively unique order polytopes are all graphic.


\begin{thebibliography}{GGKPRT13}
\bibitem[ACF18]{ACF18} M. Aprile, A. Cevallos, Y. Faenza. On 2-level polytopes arising in combinatorial settings. \textit{SIAM Journal on Discrete Mathematics}, 32(3):1857-1886, 2018.
\bibitem[AZ15]{AZ15} K. A. Adiprasito, and G. M. Ziegler. Many projectively unique polytopes. \textit{Inventiones mathematicae}, 199(3):581-652, 2015.
\bibitem[BFFFMP19]{BFFFMP19} A. Bohn, Y. Faenza, S. Fiorini, V. Fisikopoulos,  M. Macchia and K. Pashkovich. Enumeration of 2-level polytopes. \textit{Mathematical Programming Computation}, 11:173-210, 2019.
\bibitem[GGKPRT13]{GGKPRT13} J. Gouveia, R. Grappe, V. Kaibel, K. Pashkovich, R. Z. Robinson, R. R. Thomas. Which nonnegative matrices are slack matrices? \textit{Linear Algebra and its Applications}, 439(10):2921-2933, 2013.
\bibitem[GMTW19]{GMTW18-1} J. Gouveia, A. Macchia, R. R. Thomas, A. Wiebe. The slack realization space of a polytope. \textit{SIAM Journal on Discrete Msathematics}, 33(3):1637-1653, 2019.
\bibitem[GMTW20]{GMTW18-2} J. Gouveia, A. Macchia, R. R. Thomas, A. Wiebe. Projectively unique polytopes and toric slack ideal. \textit{Journal of Pure and Applied Algebra}, 224(5), 2020.
\bibitem[GPRT17]{GPRT17} J. Gouveia, K. Pashkovich, R. Z. Robinson, R. R. Thomas. Four-dimensional polytopes of minimum positive semidefinite rank. \textit{Journal of Combinatorial Theory, Series A}, 145:184-226, 2017.
\bibitem[Gr\"u03]{Gru03} B. Gr\"unbaum. \textit{Convex Polytopes}, volume 221 of \textit{Graduate Texts in Mathematics}. Springer, 2003. Second Edition.
\bibitem[MW20]{MW20} A. Macchia and A Wiebe. Slack Ideals in Macaulay2. \textit{Preprint}, arXiv:2003.07382, 2020.
\bibitem[Mc76]{Mc76} P. McMullen. Constructions for projectively unique polytopes. \textit{Discrete Mathematics}, 14:347-358, 1976.
\bibitem[PS74]{PS74} M. A. Perles, G. C. Shephard. A construction for projectively unique polytopes. \textit{Geometriae Dedicata}, 3(3): 357-363, 1974.
\bibitem[R-G06]{R-G06} J. Richter-Gebert. \textit{Realization Spaces of Polytopes}. Springer, 2006.
\bibitem[S86]{S86} R. Stanley. Two poset polytopes. \textit{Discrete and Computational Geometry}, 1(1): 9-23, 1986.
\bibitem[S12]{S12} R. Stanley. \textit{Enumerative Combinatorics}, Volume I. Second Edition. Cambridge University Press, 2012.
\bibitem[S22]{S22} E. Steinitz. Polyeder und raumeinteilungen. \textit{Encyk der Math Wiss}, 12:38-43, 1922.
\bibitem[SW19]{SW19} I. Swanson, R. Walker. Tensor-multinomial sums of ideals: Primary decompositions and persistence of associated primes. \textit{Proceedings of the American Mathematical Society}, 147(12):5071-5082, 2019.
\bibitem[Wiebe]{Wiebe} A. Wiebe. \textit{Realization Spaces of Polytopes and Matroids}. PhD thesis, University of Washington, 2019. 
\bibitem[Zie07]{Zie07} G.M. Ziegler. \textit{Lectures on Polytopes}, volume 152 of \textit{Graduate Texts in Mathematics}. Springer, 1995. Corrected and updated printing 2007.
\end{thebibliography}
\end{document}